\documentclass[10pt]{amsart}

\usepackage{dsfont, amsmath, amsfonts, amsthm, amssymb, times}
\usepackage{subcaption}

\usepackage{cancel}

\usepackage[foot]{amsaddr}
\usepackage{graphicx}
\usepackage{mathtools}
\numberwithin{equation}{section}
\usepackage{setspace}
\usepackage[dvipsnames]{xcolor}
\usepackage[nomargin]{fixme}
\usepackage{setspace}
\usepackage[colorinlistoftodos,prependcaption,textsize=tiny]{todonotes}
\newcounter{mycomment}

\usepackage{hyperref}

\theoremstyle{plain}\newtheorem{theorem}{Theorem}[section]
\theoremstyle{plain}\newtheorem{prop}[theorem]{Proposition}
\theoremstyle{plain}\newtheorem{lemma}[theorem]{Lemma}
\theoremstyle{plain}\newtheorem{cor}[theorem]{Corollary}
\theoremstyle{definition}\newtheorem{definition}[theorem]{Definition}
\theoremstyle{remark}\newtheorem{remark}[theorem]{Remark}
\theoremstyle{assumption}

\title{A non-autonomous variational problem describing a nonlinear Timoshenko beam}
\author{D. Corona$^1$, A. Della Corte$^2$, F. Giannoni$^3$}
\address{$^{1,2,3}$: Mathematics Division,
School of Science and Technology,
University of Camerino (Italy).}

\usepackage{mathtools}

\usepackage{pgfplots}	
\pgfplotsset{compat=newest} 




\begin{document}
\begin{abstract}
	We study the non-autonomous variational problem: 
	\begin{equation*}
		\inf_{(\phi,\theta)}
		\bigg\{\int_0^1 \bigg(\frac{k}{2}\phi'^2 +
		\frac{(\phi-\theta)^2}{2}-V(x,\theta)\bigg)\text{d}x\bigg\}
	\end{equation*}
	where $k>0$, $V$ is a bounded continuous function,
	$(\phi,\theta)\in H^1([0,1])\times L^2([0,1])$ and $\phi(0)=0$
	in the sense of traces. The peculiarity of the problem is its setting in the product of spaces of different regularity order.
	Problems with this form arise in elastostatics,
	when studying the equilibria of a nonlinear Timoshenko beam under distributed load,
	and in classical dynamics
	of coupled particles in time-depending external fields.
	We prove the existence and qualitative properties of global minimizers and study,
	under additional assumptions on $V$,
	the existence and regularity of local minimizers.
\end{abstract}

\maketitle


\section{Setting of the problem}
\noindent Let us indicate by $L^2:=L^2([0,1],\mathbb{R})$ and $H^1:=H^1([0,1],\mathbb{R})$ the usual Lebesgue and Sobolev spaces, and by $H^1_*\subset H^1$ the subspace of functions $\phi$
verifying $\phi(0)=0$ in the sense of traces. For $k$ a strictly positive constant and $V:[0,1]\times\mathbb{R}\to\mathbb{R}$ a bounded continuous function, we study the variational problem:
\begin{equation}
	\inf_{(\phi,\theta)\in \mathfrak{S}}
	\bigg\{\int_0^1 \bigg(\frac{k}{2}\phi'^2 +
	\frac{(\phi-\theta)^2}{2}-V(x,\theta)\bigg)\text{d}x\bigg\}
	\label{1}
\end{equation}
where the pair $(\phi,\theta)$ is searched in
\[
	\mathfrak{S}:=H^1_*\times L^2.
\]
In the following, we endow $\mathfrak{S}$
with the natural product metric and topology.
In particular, when we talk about a \textit{local} minimizer of a functional $F$
defined over $\mathfrak{S}$,
we mean a pair $(\widetilde\phi,\widetilde\theta)\in \mathfrak{S}$
such that $F(\widetilde\phi,\widetilde\theta)\le F(\phi,\theta)$
for every $(\phi,\theta)$ belonging to a sufficiently small open ball,
centered in $(\widetilde\phi,\widetilde\theta)$, with respect to  this product topology.
Clearly, in our terminology,
every global minimizer of a functional defined on $\mathfrak{S}$
is also a local minimizer.

\noindent The problem admits, for instance, the following physical interpretations:
\begin{enumerate}
	\item $\phi,\theta$ represent the kinematical descriptors of an inextensible, geometrically nonlinear Timoshenko beam submitted to a distributed load depending on $V$; 
	\item $\phi,\theta$ represent the Lagrangian coordinates of two bodies $B_\phi$, $B_\theta$ having quadratic attractive interaction potential and $x$ represents time; $B_\phi$ has mass $k$ while $B_\theta$ has negligible mass but is sensitive to an external time-dependent (electric or magnetic) field depending on $V$.
\end{enumerate}

\noindent In the following we mainly refer to the first interpretation.
Let us therefore recall that a Timoshenko beam is a one-dimensional elastic body whose kinematics is described by a curvilinear parametrization $\boldsymbol{\chi} :[0,1]\to \mathbb{R}^2$ and an extra kinematical variable $\phi$ interpreted as the \textit{orientation} of the cross-section, hence it is the angle between the cross-section of the beam and a reference axis.
A schematic representation of a Timoshenko beam is shown in Fig.\ref{scheme}.

\noindent When the material behavior is assumed to be linear and the model is
also \textit{geometrically} linearized,
we get the original formulation of the
Timoshenko beam elastic energy functional (see \cite{_Timo,Timo}), namely
\begin{equation}
	\int_0^1 \bigg(\frac{k}{2}(\phi')^2 + \frac{(\phi-\chi_2')^2}{2} \bigg)\text{d}x,
	\label{linear_}
\end{equation}
where $\chi_2$ is the vertical component of $\boldsymbol{\chi}$.
If geometric nonlinearities are considered, the elastic energy of the beam reads as follows (for a detailed derivation, see \cite{Battista}): 
\begin{equation}
	\int_0^1 \bigg(\frac{k}{2}\phi'^2 + \frac{(\phi-\theta)^2}{2} \bigg)\text{d}x,
	\label{nonlinear_}
\end{equation}
where it was assumed that the beam has length 1 and is inextensible, that is 
\begin{equation}
	||\boldsymbol{\chi}'||^2\equiv 1.
	\label{inextensible}
\end{equation}
The bending coefficient $k$ belongs to $\mathbb{R}^+$ and the function $\theta$ verifies $\boldsymbol{\chi}'=(\cos\theta,\sin\theta)$.

\begin{figure}
	\includegraphics[scale=0.2]{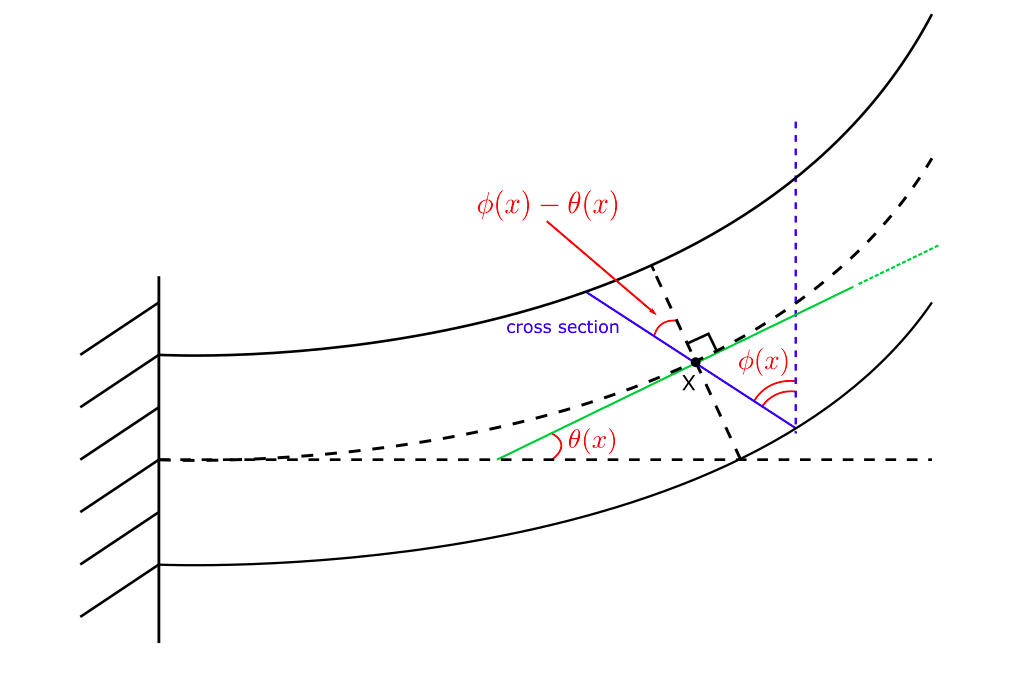} 
	\caption{A schematic representation of a Timoshenko beam.
	Notice that the inextensibility assumption \eqref{inextensible} makes it possible to interpret the variable $x$ as a curvilinear abscissa along the middle line (the dashed curve in the picture).
}
	\label{scheme}
\end{figure}

\noindent The potential due to a distributed load
$\boldsymbol{b} (x)$ is 
$
\int_0^1 \big( {\boldsymbol{b} (x)} \cdot \boldsymbol{\chi}(x) \big)\text{d}x
$
which can be rewritten, using an integration by parts,
as $\int_0^1 \big(\boldsymbol{B} (x) \cdot \boldsymbol{\chi}'(x) \big)\text{d}x$,
where $\boldsymbol{B}(x)=\int_x^1 \boldsymbol{b}(\xi)\text{d}\xi$
and the kinematical constraint
\begin{equation}\label{constraintchi}
	\boldsymbol{\chi}(0)=0
\end{equation}
was imposed.
Notice that \eqref{nonlinear_} reduces to \eqref{linear_}
under the smallness assumption $\chi_2' =\sin\theta \approx \theta$.
The minimization of the total energy, if the load is uniform and has zero horizontal component, is therefore of the form \eqref{1} with \begin{equation}\label{sin_}
	V(x,\theta)= b(1-x)  \sin\theta,
\end{equation} 
where  $b>0$ corresponds to the load density per unit length.
Adding to the constraint \eqref{constraintchi} the further requirement 
\begin{equation}
	\label{constraint_phi}
	\phi(0)=0,
\end{equation}
we obtain the conditions usually expressed saying that
the beam is horizontally clamped at one of its extremes.
Notice that, in the variational formulation \eqref{1},
the constraint \eqref{constraint_phi} is contained in the definition of $\mathfrak{S}$,
while the constraint \eqref{constraintchi} has to be taken into account when
reconstructing the vector field $\boldsymbol{\chi}(x)$ from $\theta(x)$.
The case in which $V$ does not depend on $x$ was studied in \cite{Battista},
while a numerical investigation of the case with distributed load
was performed in \cite{Dellisola}.

\noindent The variational problem \eqref{1} is close to a model-case of non-autonomous,
not strictly convex problem.
The absence of a term with $\theta'^2$ makes the
integrand not (strictly) convex in the highest order derivative
(so that convergence of minimizing sequences is not granted by standard arguments)
and at the same time settles the problem in the ``asymmetric" space $H^1\times L^2$.
When $k=1$, an effective way to see the above mentioned asymmetry of the
problem is writing it follows:
\begin{equation*}
	\inf_{\phi,\theta}
	\bigg\lbrace
		\frac 12\bigg(\Vert\phi\Vert_{H^1}^2+\Vert\theta\Vert_{L^2}^2\bigg)
		-(\phi,\theta)_{L^2}-\int_0^1 V(x,\theta)\text{d}x
	\bigg\rbrace.
\end{equation*}
When $k\ne1$, an analogous representation can be obtained considering
an equivalent metric on $H^1$.

\noindent The results developed herein all hold when $V$ is given by \eqref{sin_}. However,
we will not limit ourselves to this form of $V$
for the existence and the main properties of the global minimizer.

\noindent Specifically, in Section \ref{sec:global-min}
the existence of a global minimizer of problem \eqref{1} will be
proved assuming that $V$ is a bounded continuous function.
Some general properties verified by global minimizers will be
established under the further assumption that $\theta\to V(x,\theta)$ admits a
global maximum at $a>0$ (independently of $x$) and that
$V(x,-\theta)<V(x,\theta)$ for $\theta\in [0,a]$ and for almost every $x\in [0,1]$.
In Section \ref{sec:local_minimizers},
the existence and regularity of local
minimizers, different from the global ones,
will be studied under the assumption that $V$ is given by \eqref{sin_},
and that both $b$ and $k/b$ are sufficiently small.
The main motivation of this study
is the existence of minimizers with similar properties in the simpler cases of Euler
beam under distributed load (see \cite{Euler_,Euler__}) and of Timoshenko beam
under concentrated end load (see \cite{Battista}). \\

\section{Global minimizers}
\label{sec:global-min}

Let us assume that function $V$ in \eqref{1} is a bounded
continuous function. We remark that, since we do not have a term depending on
$\theta'$ in the integral \eqref{1}, the fact that a minimizing sequence
$(\phi_n,\theta_n)$ has a bounded energy does not provide any information for
the derivatives of $\theta_n$.
Hence, the weak convergence in $L^2$ of $\theta_n$ to a
function $\theta$ does not imply that $\theta$ minimizes the energy, so the
usual direct method of the calculus of variations must be used with
caution.

\noindent Let us reformulate problem \eqref{1} as
\begin{equation}
	\inf_{(\phi,\theta)\in \mathfrak{S}} F(\phi,\theta),
	\label{F___}
\end{equation}
with 
\begin{equation}
	F(\phi,\theta)=
	\int_0^1 \bigg(\frac{k}{2}\phi'^2 + \frac{(\phi-\theta)^2}{2}-V(x,\theta) \bigg)\,
	\text{d}x.
	\label{functionalF}
\end{equation}
We establish the existence of a minimizer of $F(\phi,\theta)$
in the following proposition.

\begin{prop}
	\label{th1}
	Assume that $k>0$ and that $V$ is a bounded continuous function on
	$[0,1]\times \mathbb R$.
	Then Problem \eqref{F___} admits a solution.
\end{prop}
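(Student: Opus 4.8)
The obstacle flagged in the preceding discussion is that the term $-\int_0^1 V(x,\theta)\,\dx$ is in general not weakly lower semicontinuous in $\theta$, so applying the direct method directly to the pair $(\phi,\theta)$ breaks down at the level of the $\theta$-variable. The plan is to bypass weak convergence of $\theta$ altogether by exploiting that $\theta$ enters \eqref{functionalF} without derivatives: for fixed $\phi$ the minimization over $\theta$ is purely pointwise in $x$. Concretely, I would introduce the effective potential
\begin{equation*}
	g(x,p):=\inf_{t\in\mathbb{R}}\Big(\tfrac{(p-t)^2}{2}-V(x,t)\Big),
\end{equation*}
and first record its elementary properties. Writing $M:=\sup|V|$, the choice $t=p$ gives $g(x,p)\le M$, while every competitor is bounded below by $-M$, so $|g|\le M$; moreover any point $t$ with objective value $\le M$ satisfies $(p-t)^2\le 4M$, which confines the infimum to the compact set $|t-p|\le 2\sqrt{M}$ and in particular shows it is attained. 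Continuity of $g$ on $[0,1]\times\mathbb{R}$ then follows from continuity of $V$ together with this uniform localization (a Berge maximum-theorem argument, the constraint interval $[p-2\sqrt{M},\,p+2\sqrt{M}]$ being compact and continuous in $p$).

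Next I would reduce \eqref{F___} to a problem in $\phi$ alone. For every $(\phi,\theta)\in\mathfrak{S}$ the pointwise inequality yields
\begin{equation*}
	F(\phi,\theta)\ge \int_0^1\Big(\tfrac{k}{2}\phi'^2+g(x,\phi(x))\Big)\dx=:\widetilde F(\phi),
\end{equation*}
and $\widetilde F$ is well defined on $H^1_*$ since $g$ is bounded and $\phi'\in L^2$. I would then minimize $\widetilde F$ over $H^1_*$ by the direct method. Coercivity is immediate from $\widetilde F(\phi)\ge \tfrac{k}{2}\|\phi'\|_{L^2}^2-M$ together with the Poincaré inequality, which, thanks to the trace condition $\phi(0)=0$, controls $\|\phi\|_{H^1}$. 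For lower semicontinuity, take $\phi_n\rightharpoonup\phi$ in $H^1$; the one-dimensional compact embedding $H^1\hookrightarrow C^0([0,1])$ gives uniform convergence $\phi_n\to\phi$, whence $g(x,\phi_n(x))\to g(x,\phi(x))$ uniformly by continuity of $g$ and thus $\int_0^1 g(x,\phi_n)\,\dx\to\int_0^1 g(x,\phi)\,\dx$, while the convex term $\int_0^1\tfrac{k}{2}\phi'^2\,\dx$ is weakly lower semicontinuous. Since $H^1_*$ is a closed subspace of the reflexive space $H^1$, hence weakly closed, $\widetilde F$ attains its minimum at some $\phi^*\in H^1_*$.

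It remains to recover an admissible $\theta^*$. For a.e.\ $x$ the map $t\mapsto \tfrac{(\phi^*(x)-t)^2}{2}-V(x,t)$ is continuous and attains its minimum on the compact set $|t-\phi^*(x)|\le 2\sqrt{M}$; a measurable selection theorem (Filippov's lemma) then furnishes a measurable $\theta^*$ realizing this minimum a.e., and the localization $|\theta^*(x)-\phi^*(x)|\le 2\sqrt{M}$ together with $\phi^*\in C^0\subset L^\infty$ gives $\theta^*\in L^\infty\subset L^2$. By construction $F(\phi^*,\theta^*)=\widetilde F(\phi^*)$, so combining this with the lower bound $F(\phi,\theta)\ge\widetilde F(\phi)\ge\widetilde F(\phi^*)$ valid for all $(\phi,\theta)\in\mathfrak{S}$ shows that $(\phi^*,\theta^*)$ is a global minimizer. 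I expect the two genuinely technical points to be the continuity of the effective potential $g$ and the measurable selection of $\theta^*$; the remaining coercivity and semicontinuity steps are routine, and the entire difficulty created by the non-convex, $\theta'$-free structure is absorbed into the pointwise reduction to $\widetilde F$.
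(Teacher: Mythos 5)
Your proposal is correct and follows essentially the same route as the paper's proof: both reduce the problem to a scalar one in $\phi$ via the pointwise infimum over $\theta$ (your $g(x,p)$ is the paper's $\tfrac{p^2}{2}+K(x,p)$), establish continuity of this effective potential by localizing the minimizing $\theta$ in a compact interval, solve the reduced problem by the direct method (the paper cites Tonelli), and recover $\theta$ by a measurable selection (the paper takes the smallest minimizer and uses its lower semicontinuity where you invoke Filippov's lemma). The differences are only in the packaging of these steps, not in the underlying argument.
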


\begin{proof}
	\medskip

	Fix $M > |V|$. Then $F(0,0)\leq M$ and the infimum can be searched among functions $(\phi,\theta)$ satisfying $F(\phi,\theta)\leq M$ and
	such functions satisfy
	\[
		\int_0^1 \frac k2 \phi'^2\text{d}x\le 2M.
	\]
	The infimum can therefore be searched assuming that $\|\phi\|_{H^1_*}\le C$ for some constant $C$ depending only on $M$ and $k$, so that it follows that $\|\phi\|_{C^0([0,1])}\le \|\phi\|_{H^1_*} \le C$.

	\medskip 
	\noindent Let us define the function $H\colon [0,1]\times \mathbb{R}^2 \to \mathbb{R}$
	as
	\[
		H(x,\phi,\theta):= -\phi\theta+\frac{\theta^2}{2}-V(x,\theta),
	\]
	so that we have
	\[
		F(\phi,\theta) = 
		\int_0^1 \bigg(\frac{k}{2}\phi'^2 + \frac{\phi^2}{2}+ H(x,\phi,\theta) \bigg)\,
		\text{d}x.
	\]

	\noindent Let us now set, for any $x\in[0,1]$ and $\phi\in[-C,C]$, 
	\begin{equation}
		\label{K}
		K(x,\phi):=\inf_{\theta\in \mathbb{R}}(H(x,\phi,\theta)).
	\end{equation}
	It is easy to check that any real number $\theta$ satisfying $H(x,\phi,\theta)\leq H(x,\phi,0)\le M$,
	satisfies $|\theta |\le D:=C+\sqrt{2M+C^2}$.
	Therefore the set of solutions of problem \eqref{K}
	is a non-empty closed subset of the compact $[-D,D]$.
	For every $x\in[0,1]$, let us indicate by $\theta_\phi(x)$ the smallest solution of  \eqref{K}.
	Being lower-semicontinuous, $\theta_\phi$ is a measurable function, and being bounded it is in $L^2([0,1])$.

	\noindent Let $(x_n,\phi_n)$ be a sequence converging to $(x,\phi)$.
	Up to a subsequence,
	$\theta_{\phi_n}$ converges to some $\theta_\phi \in [-D,D]$
	and we have
	\[
		H(x_n,\phi_n,\theta_{\phi_n}) = K(x_n,\phi_n) \leq H(x_n,\phi_n,\theta_\phi).
	\]
	As $H$ is continuous, passing to the limit we get 
	\[
		K(x,\phi)\leq H(x,\phi,\theta_\phi)
		= \lim K(x_n,\phi_n) \leq  H(x,\phi,\theta_\phi) =K(x,\phi).
	\]
	Hence $K$ is a bounded continuous function.
	By standard arguments of the calculus of variations
	(see for instance Tonelli's existence theorem in \cite{buttazzo}),
	the problem 
	\begin{equation}
		\inf_\phi \int_0^1 \bigg(\frac{k}{2}\phi'^2
		+ \frac{\phi^2}{2}+K(\phi,x)\bigg)\text{d}x
		\label{proplem_phi}
	\end{equation}
	admits a solution $\bar \phi$ in $H^1_*([0,1])$
	and
	\[
		\inf_{(\phi,\theta)}F(\phi,\theta)
		\leq F(\bar\phi, \theta_{\bar \phi})
		= \inf_\phi \int_0^1 \bigg(\frac{k}{2}\phi'^2
		+ \frac{\phi^2}{2}+K(\phi,x)\bigg)\text{d}x \leq \inf_{(\phi,\theta)}F(\phi,\theta).
	\]
	Hence, 
	the pair $(\bar\phi, \theta_{\bar \phi})$ is a solution to Problem \eqref{1}. 
\end{proof}


\begin{remark}
	In principle, the minimizer whose existence has been established in Proposition \ref{th1} may fail very badly to be unique,
	even whether the related Problem \eqref{proplem_phi},
	which is a classical problem of calculus of variations,
	admits a unique solution $\bar{\phi}$. 

	\noindent Suppose, for instance, that $V(x,\theta)$ has the following form:
	\begin{equation*}
		V(x,\theta)=g\big(\theta-f(x)\big)
	\end{equation*}
	where $g$ is such that
	$\frac {\partial^2 H}{\partial \theta^2}=1
	-g''(\theta-f(x))> 0$ when $(\theta-f)$ belongs to some open set $I$.
	Then every $\xi$ such that $$\xi-\phi-g'(\xi-f(x))=0$$ is
	a solution of problem \eqref{K} if $\xi-f\in I$.
	
	\noindent Suppose now that $\bar{\phi}|_A\equiv f|_A$ on a set $A\subset[0,1]$ of positive
	measure, and that the
	equation $s=g'(s)$ has two distinct solutions $s_1,s_2\in I$
	such that, for $\theta_1(x)=s_1+\phi(x)$
	and $\theta_2(x)=s_2+\phi(x)$, we have
	\[
		H(x,\phi,\theta_1)=H(x,\phi,\theta_2).
	\]
	
\noindent Then the problem \eqref{K} is solved by
	both $\theta_1$ and $\theta_2$.
	In these hypotheses,
	$(\bar{\phi},\theta^*)$ is a minimizer of $F$ for every
	$\theta^*$ defined as follows:
	\[
		\theta^*=\theta_1\ \text{for} \ x\in B\ ,
		\ \ \theta^*=\theta_2\  \text{for}\  x\in A\setminus B \ \ ,
		\ \ \theta^*=\theta_{\bar{\phi}}\  \text{for}\  x\in [0,1]\setminus A
	\]
	where $B\subset A$ is a (completely arbitrary) subset of positive measure.

	\noindent Pathological phenomena of this type are well known
	(similar problems were already discussed, for instance,
	in the classical works \cite{Young1937,Young1938}),
	and are usually addressed by means of relaxation theory
	(see e.g. \cite{Dacorogna}, Chapter III),
	which however has not been developed, to the best of our knowledge,
	for problems living in the product of Sobolev
	spaces of different regularity order.
	In the following, we will be mainly concerned with cases in which $V(x,\theta)$
	does not produce such pathological multiplicity of minimizers.

\end{remark}

\vspace{0.5cm}

\noindent We shall prove now some properties
of the global minimizers of Problem \eqref{1}.
In addition to the information they provide on the problem,
these results will ensure that the local minimizers
studied in Section \ref{sec:local_minimizers}
are necessarily not \textit{global} minimizers.

%

\begin{lemma}\label{2}
	Assume, in addition to the assumptions of Proposition \ref{th1},
	that there exists $a>0$ such that 
	for almost every $x\in[0,1]$, for every $\theta\in\mathbb{R}$,
	$V(x,\theta)\leq V(x,a)$ and for every $\theta\in (0,a]$, $V(x,-\theta)< V(x,\theta)$.
	Then any minimizer $(\widetilde \phi,\widetilde \theta)$  of \eqref{1} takes values in $[0,a]\times[0,a]$.
\end{lemma}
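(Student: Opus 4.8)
The plan is to integrate out $\theta$ pointwise and reduce the whole statement to a bound on the single function $\widetilde\phi$. Exactly as in the proof of Proposition \ref{th1}, set $g_{x,p}(\theta):=\tfrac12(p-\theta)^2-V(x,\theta)$ and $W(x,\phi):=\inf_{\theta}g_{x,\phi}(\theta)$, so that $F(\phi,\theta)\ge\int_0^1\big(\tfrac{k}{2}\phi'^2+W(x,\phi)\big)\dx$ with equality when $\theta$ is chosen, for a.e. $x$, as a pointwise minimizer of $g_{x,\phi}$. A short argument (comparing the energy of $\widetilde\theta$ with that of a measurable pointwise-minimizing selection, as in Proposition \ref{th1}) shows that for a minimizer $(\widetilde\phi,\widetilde\theta)$ the value $\widetilde\theta(x)$ minimizes $g_{x,\widetilde\phi(x)}$ for a.e. $x$, and that $\widetilde\phi$ minimizes the reduced functional $J(\phi):=\int_0^1\big(\tfrac{k}{2}\phi'^2+W(x,\phi)\big)\dx$ over $H^1_*$. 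I would then record two elementary comparison facts, valid for a.e. $x$: (Fact A) if $p\le a$ then every minimizer of $g_{x,p}$ is $\le a$ (compare its value with that at $\theta=a$, using $V(x,\cdot)\le V(x,a)$); and (Fact B) if $p\ge0$ then every minimizer is $\ge0$ (for a putative minimizer $\theta^*\in[-a,0)$ compare with $-\theta^*$ using the strict asymmetry $V(x,-s)<V(x,s)$, and for $\theta^*<-a$ compare with $a$). By these two facts, once I show $\widetilde\phi\in[0,a]$ the inclusion $\widetilde\theta\in[0,a]$ follows immediately.

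For the upper bound I would exploit that $W(x,\cdot)$ attains its global minimum at $a$: indeed $W(x,\phi)\ge-\max_\theta V(x,\theta)=-V(x,a)$ for every $\phi$, while $W(x,a)\le g_{x,a}(a)=-V(x,a)$, so $W(x,a)=-V(x,a)=\min_\phi W(x,\phi)$. Replacing $\widetilde\phi$ by $\min(\widetilde\phi,a)\in H^1_*$ therefore does not increase the $W$-term, and strictly decreases the Dirichlet term whenever $\{\widetilde\phi>a\}$ has positive measure, because $\widetilde\phi(0)=0$ forces $\widetilde\phi$ to be non-constant on each connected component of $\{\widetilde\phi>a\}$. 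Minimality of $\widetilde\phi$ for $J$ then gives $\widetilde\phi\le a$, and Fact A gives $\widetilde\theta\le a$.

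The lower bound is the crux. Here I would fold $\widetilde\phi$ into $[0,a]$ by $\Phi:=\min(|\widetilde\phi|,a)$ (which lies in $H^1_*$ and never increases the Dirichlet term) and prove the pointwise inequality $W(x,\Phi(x))\le W(x,\widetilde\phi(x))$. On $\{\widetilde\phi\in[0,a]\}$ it is an equality, on $\{|\widetilde\phi|>a\}$ it follows from $W(x,a)=\min W$, and on $\{-a\le\widetilde\phi<0\}$ it reduces to the reflection inequality $W(x,\psi)\le W(x,-\psi)$ for $\psi\in(0,a]$. The latter I would obtain by feeding, into the definition of $W(x,\psi)$, a competitor built from a minimizer $\theta_0$ of $g_{x,-\psi}$: take $\theta_0$ itself if $\theta_0\ge0$, its reflection $-\theta_0$ if $\theta_0\in[-a,0)$ (this is where the strict asymmetry enters), and $\theta=a$ if $\theta_0<-a$. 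This yields $J(\Phi)\le J(\widetilde\phi)$, hence equality by minimality. The genuinely delicate point — and the step I expect to be the main obstacle — is to upgrade this to $\widetilde\phi\ge0$: the reflection inequality turns out to be strict except in the degenerate situation where $\theta=0$ is the only minimizer of $g_{x,\widetilde\phi(x)}$ (equivalently $\widetilde\theta=0$) on a positive-measure subset of $\{\widetilde\phi<0\}$, which is precisely the type of non-uniqueness flagged in the remark following Proposition \ref{th1}. To rule this out I would invoke the Euler--Lagrange equation $-k\widetilde\phi''+\widetilde\phi=\widetilde\theta$ together with the natural boundary condition $\widetilde\phi'(1)=0$: on a component of $\{\widetilde\phi<0\}$ one would have $\widetilde\theta\equiv0$ and hence $\widetilde\phi''=\widetilde\phi/k$, whose only solution vanishing at an endpoint (or with vanishing derivative at $x=1$) and vanishing at the other endpoint is $\widetilde\phi\equiv0$, contradicting $\widetilde\phi<0$. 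Thus $\{\widetilde\phi<0\}$ is empty, and Fact B finally gives $\widetilde\theta\ge0$, completing the proof that $(\widetilde\phi,\widetilde\theta)$ takes values in $[0,a]\times[0,a]$.
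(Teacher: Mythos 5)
Your argument is correct in substance but follows a genuinely different route from the paper's. The paper never integrates out $\theta$: it truncates the potential (setting $\widetilde V=V$ for $\theta<a$ and $\widetilde V=V(x,a)$ for $\theta\ge a$), notes $\widetilde F\le F$, and compares $F(\widetilde\phi,\widetilde\theta)$ with $\widetilde F$ evaluated at the folded pair $(\min(|\widetilde\phi|,a),\min(|\widetilde\theta|,a))$, observing that each of the three integrand terms is separately non-increasing under these replacements; the resulting chain of equalities is then read off term by term to get $|\widetilde\phi|\le a$, $|\widetilde\theta|\le a$ and $\widetilde\theta\ge 0$. Your reduction to the value function $W(x,\phi)=\inf_\theta g_{x,\phi}(\theta)$ buys a clean decoupling (Facts A and B transfer any bound on $\widetilde\phi$ to $\widetilde\theta$ for free), at the price of having to analyze the equality case of the reflection inequality $W(x,\psi)\le W(x,-\psi)$, which the paper's term-by-term comparison avoids. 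Notably, your Euler--Lagrange/ODE step in the degenerate case does real work that the paper's written proof does not perform: the paper's equality analysis yields $\widetilde\theta\ge 0$ but never explicitly addresses the sign of $\widetilde\phi$ (its coupling-term equality only gives $\widetilde\phi\,\widetilde\theta\ge 0$, hence $\widetilde\phi\ge 0$ only where $\widetilde\theta>0$), so on this point your version is the more complete one.

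One step should be tightened. Your strictness analysis of the equality $W(x,\Phi(x))=W(x,\widetilde\phi(x))$ only covers $\{-a\le\widetilde\phi<0\}$, where $\Phi=-\widetilde\phi$ and the reflection inequality applies. On $\{\widetilde\phi<-a\}$ you instead use $W(x,a)=\min W\le W(x,\widetilde\phi)$, and the equality case there does \emph{not} force $\widetilde\theta=0$: equality holds precisely when $V(x,\widetilde\phi(x))=V(x,a)$, in which case the unique minimizer of $g_{x,\widetilde\phi(x)}$ is $\theta=\widetilde\phi(x)$ itself; the hypotheses do not exclude such secondary maxima of $V(x,\cdot)$ below $-a$. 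So this set must be disposed of separately before the ODE argument can assume $\widetilde\theta\equiv 0$ on every component of $\{\widetilde\phi<0\}$. Your own ingredients suffice: since both the Dirichlet and the $W$ contributions to $J(\widetilde\phi)-J(\Phi)$ are nonnegative and their sum vanishes, one gets $\int_{\{\widetilde\phi<-a\}}\widetilde\phi'^2\,\text{d}x=0$, so $\widetilde\phi$ is locally constant on the open set $\{\widetilde\phi<-a\}$, which is incompatible with its boundary values ($-a$ at each finite endpoint of a component, and $0$ at $x=0$); hence $\{\widetilde\phi<-a\}=\emptyset$ and only the reflection case remains. With this addition the proof is complete.
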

\begin{proof}
	Define $\widetilde V$ by $\widetilde V(x,\theta)=V(x,\theta)$ if $\theta<a$, $\widetilde
	V(x,\theta)=V(x,a)$  if $\theta\geq a$, so that $\widetilde V$ now satisfies,
	for almost every $x\in[0,1]$ and for every $\theta\in\mathbb{R}$,
	$\widetilde{V}(x,\theta)\leq \widetilde{V}(x,a)$ and $\widetilde{V}(x,\theta)\leq \widetilde{V}(x,|\theta|)$.
	We set
	\begin{equation}
		\widetilde F(\phi,\theta):=\int_0^1 \bigg(\frac{k}{2}\phi'^2 + \frac{(\phi-\theta)^2}{2}-\widetilde V(x,\theta)\bigg)\text{d}x.
		\label{Ftilde}
	\end{equation}
	Clearly $\widetilde F \leq F$. Moreover, we have that 
	\[
		\widetilde F (|\widetilde\phi|,|\widetilde\theta|)
		\leq \widetilde F(\widetilde\phi,\widetilde\theta)
		\quad \text{and}\quad
		\widetilde F (\min(|\widetilde\phi|,a),\min(|\widetilde\theta|,a))
		\leq \widetilde F (|\widetilde\phi|,|\widetilde\theta|)
	\]
	as all integrands in \eqref{Ftilde} do not increase with these replacements. Set 
	\[
		(\bar\phi,\bar\theta):=(\min(|\widetilde\phi|,a),\min(|\widetilde\theta|,a)).
	\]
	As these functions take values in $[0,a]$, we have $\widetilde F (\bar\phi,\bar\theta)= F (\bar\phi,\bar\theta)$.
	Hence
	\[
		F (\bar\phi,\bar\theta)\leq \widetilde F(\widetilde\phi,\widetilde\theta)
		\leq F(\widetilde\phi,\widetilde\theta)\leq F (\bar\phi,\bar\theta).
	\]
	This implies that the previous inequalities were in fact equalities. Since
	$\int_0^1 \frac{k}{2}\phi'^2\text{d}x$ does not decrease when replacing
	$|\widetilde\phi|$ by $\min\{|\widetilde\phi|,a\}$, it follows that $|\widetilde\phi(x)|\leq
	a$ almost everywhere (and thus everywhere as $\widetilde\phi$ is
	continuous).
	Moreover, since $\int_0^1 \frac{(\phi-\theta)^2}{2}\text{d}x$
	does not decrease when replacing  $|\widetilde\phi|$ and $|\widetilde\theta|$  by
	$\min\{|\widetilde\phi|,a\}$ and $\min\{|\widetilde\theta|,a\}$, it follows that the sets
	$\{x,\ |\widetilde\phi(x)|>a\}$ and  $\{x,\ |\widetilde\theta(x)|>a\}$ coincide up
	to a null set.
	Thus $|\widetilde\theta(x)|\leq a$ almost everywhere.
	Finally, noting that for every $\theta\in (0,a]$, we have $V(x,-\theta)<
	V(x,\theta)$ a.e. on $[0,1]$, and that $\int_0^1 \widetilde V(x,\theta)
	\text{d}x$ does not decrease when replacing  $\widetilde\theta$ by
	$|\widetilde\theta|$, it follows that the set $\{x\in[0,1]:\ \widetilde\theta(x)\in [-a,0)
	\}$ has null measure, hence  $\widetilde\theta(x)\geq 0$ almost everywhere. 
\end{proof}

\begin{prop}
	\label{prop:regularity_global_min}
	In addition to the assumptions of Lemma \ref{2},
	assume that, for every $x\in[0,1]$,
	the function $\theta\to V(x,\theta)$ is of class $C^k(\mathbb{R})$,
	with $k \ge 2$,
	and satisfies $\frac{\partial^2 V}{\partial \theta^2}(x,\theta)\ne 1$
	in $[0,1]\times [0,a]$.
	If $(\widetilde{\phi},\widetilde{\theta})$
	is a minimizer of \eqref{1},
	then $\widetilde\theta \in C^{k-1}(\mathbb{R})$
	and $\widetilde\phi \in C^{k + 1}(\mathbb{R})$.
	Moreover,
	if $V$ is a $C^\infty$ ($C^\omega$) function,
	then both $\widetilde{\phi}$ and $\widetilde{\theta}$ are
	$C^\infty$ ($C^\omega$) functions.
\end{prop}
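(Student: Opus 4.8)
The plan is to derive the Euler--Lagrange system satisfied by a minimizer and then bootstrap its regularity via an implicit-function-theorem argument. First I would record the first-order conditions. Perturbing $\widetilde\theta$ in $L^2$ while keeping $\widetilde\phi$ fixed shows that, for almost every $x$, the value $\widetilde\theta(x)$ minimizes $\theta\mapsto\tfrac12(\widetilde\phi(x)-\theta)^2-V(x,\theta)$ over $\mathbb{R}$ (this is exactly the pointwise selection used in Proposition \ref{th1}); being a smooth interior minimum, it yields the pointwise relation
\begin{equation*}
	\widetilde\theta(x)-\widetilde\phi(x)-\frac{\partial V}{\partial\theta}(x,\widetilde\theta(x))=0\qquad\text{for a.e. }x. \tag{$\ast$}
\end{equation*}
Perturbing $\widetilde\phi$ in $H^1_*$ gives the weak form of
\begin{equation*}
	-k\widetilde\phi''+\widetilde\phi=\widetilde\theta,\qquad \widetilde\phi(0)=0,\quad \widetilde\phi'(1)=0. \tag{$\ast\ast$}
\end{equation*}

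The key preliminary step---and the one I expect to be the crux---is to upgrade the hypothesis $\partial^2_\theta V\neq 1$ into the sign information $\partial^2_\theta V<1$ on $[0,1]\times[0,a]$. For this I would fix $x$ and use that $\theta\mapsto V(x,\theta)$ attains a global maximum at the interior point $a$ (Lemma \ref{2}), so the second-order necessary condition for a maximum forces $\partial^2_\theta V(x,a)\le 0<1$; since $\theta\mapsto\partial^2_\theta V(x,\theta)$ is continuous and never equals $1$ on the connected interval $[0,a]$, the intermediate value theorem yields $\partial^2_\theta V(x,\theta)<1$ throughout $[0,a]$. Consequently the fibred map $\theta\mapsto\tfrac12(\phi-\theta)^2-V(x,\theta)$ is strictly convex on $[0,a]$, which both makes the pointwise minimizer unique (so that $(\ast)$ genuinely holds) and provides the nondegeneracy $\partial_\theta\big(\theta-\phi-\partial_\theta V\big)=1-\partial^2_\theta V>0$ needed below. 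Without this step a minimizer could in principle be ``bang--bang'', valued in $\{0,a\}$, and no continuity would be available.

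With nondegeneracy in hand, I would apply the implicit function theorem to $\Psi(x,\phi,\theta):=\theta-\phi-\partial_\theta V(x,\theta)$ on the compact region $[0,1]\times[0,a]\times[0,a]$ (recall $\widetilde\phi,\widetilde\theta\in[0,a]$ by Lemma \ref{2}). Since $\partial_\theta\Psi>0$ and, by the regularity of $V$, $\partial_\theta V$ is of class $C^{k-1}$, this produces a $C^{k-1}$ solution map $\theta=\Theta(x,\phi)$ with $\widetilde\theta(x)=\Theta(x,\widetilde\phi(x))$. Substituting into $(\ast\ast)$ collapses the system to the single second-order ODE
\[
	-k\widetilde\phi''+\widetilde\phi=\Theta(x,\widetilde\phi),\qquad \widetilde\phi(0)=0,\ \widetilde\phi'(1)=0,
\]
whose right-hand side is $C^{k-1}$ in $(x,\phi)$.

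Finally I would run the standard ODE bootstrap. From $(\ast\ast)$ with $\widetilde\theta\in L^\infty$ one first gets $\widetilde\phi\in H^2\subset C^1$, hence $\widetilde\theta=\Theta(\cdot,\widetilde\phi)\in C^0$ and then $\widetilde\phi\in C^2$; thereafter, whenever $\widetilde\phi\in C^{m}$ the relation $\widetilde\theta=\Theta(\cdot,\widetilde\phi)$ gives $\widetilde\theta\in C^{\min(k-1,m)}$, and $(\ast\ast)$ gives $\widetilde\phi\in C^{\min(k-1,m)+2}$, the iteration saturating exactly at $\widetilde\theta\in C^{k-1}$ and $\widetilde\phi\in C^{k+1}$, as claimed. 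For $V\in C^\infty$ the bootstrap never terminates, so $\widetilde\phi,\widetilde\theta\in C^\infty$. For $V\in C^\omega$ I would instead invoke the analytic implicit function theorem to obtain $\Theta\in C^\omega$, and then the analyticity of solutions of the ODE $-k\phi''+\phi=\Theta(x,\phi)$ with analytic right-hand side to conclude $\widetilde\phi\in C^\omega$ and hence $\widetilde\theta=\Theta(\cdot,\widetilde\phi)\in C^\omega$.
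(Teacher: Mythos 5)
Your proposal is correct and follows essentially the same route as the paper: the pointwise first-order condition $(\ast)$, inversion of $\theta\mapsto\theta-\partial_\theta V(x,\theta)$ via the (implicit/inverse) function theorem, the ODE bootstrap through $(\ast\ast)$, and the analytic implicit function theorem plus analyticity of ODE solutions in the $C^\omega$ case. Your extra step pinning down the sign $\partial^2_\theta V<1$ on $[0,1]\times[0,a]$ from the second-order condition at the maximum $a$ is a welcome clarification of a point the paper handles more tersely (it infers the monotonicity of $f$ from $\partial_\theta f\neq 0$ together with the boundary values $f(x,0)<0$ and $f(x,a)=a$).
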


\begin{proof}
	For every $x\in [0,1]$, we have 
	\begin{equation*}
		\label{condition_theta}
		\widetilde{\theta}=
		\min_{\theta}
		\bigg(-\widetilde{\phi}(x)\theta + \frac{\theta^2} {2}-V(x,\theta)\bigg)
	\end{equation*} 
	so $\widetilde{\theta}(x)$ has to solve 
	\begin{equation}
		\label{_theta_}
		-\widetilde{\phi}(x)+
		\widetilde\theta-\frac{\partial V}{\partial \theta}(x,\widetilde\theta)=0
	\end{equation}
	Let us set $f(x,\theta):=\theta - \frac {\partial V}{\partial \theta}(x, \theta)$.
	By the hypotheses of Lemma \ref{2},
	for every $x\in [0,1]$ we have
	\[
		\frac{\partial V}{\partial \theta}(x,0) > 0
		\quad\text{and}\quad
		\frac{\partial V}{\partial \theta}(x,a) = 0,
	\]
	thus
	\[
		f(x,0) < 0
		\quad\text{and}\quad
		f(x,a) = a.
	\]

	\noindent By Lemma \ref{2},  $\widetilde\theta$
	takes values in $[0,a]$,
	so, by hypothesis we have
	\begin{equation}
		\label{deriv_}
		\frac{\partial f}{\partial \theta}(x,\theta)
		= 1-\frac{\partial^2 V}{\partial \theta^2}(x,\theta) \ne 0.
	\end{equation}
	As a consequence,
	$f$ is strictly increasing with respect to $\theta$
	in $[0,1]\times [0,a]$,
	and 
	for every $x\in[0,1]$
	there exists a unique value of $\widetilde\theta \in [0,a]$
	such that \eqref{_theta_} holds.

	\noindent By the inverse function theorem, there exists a function
	$g\colon [0,a]\times [0,1] \to [0,a]$,
	with the same regularity of $f$, hence of class $C^{k-1}$,
	such that 
	\begin{equation}
		\label{eq:glob_theta_g}
		\widetilde\theta(x) = g(x,\widetilde\phi(x)).
	\end{equation}
	As a consequence, $\widetilde\theta \in C^0([0,1])$.
	Since $(\widetilde\phi,\widetilde\theta)$ is a minimizer, we have
	\[
		\text{d}F(\widetilde\phi,\widetilde\theta)[\xi,0]
		= \int_{0}^{1} \bigg(
			k \widetilde\phi' \xi' + (\widetilde\phi - \widetilde\theta)\xi
		\bigg)\text{d}x = 0,
		\quad \forall \xi \in C^\infty_c([0,1]),
	\]
	hence
	\begin{equation}
		\label{eq:variation}
		k\widetilde\phi' \xi' + (\widetilde\phi- \widetilde\theta)\xi = 0.
	\end{equation}
	Since $\widetilde\theta$ is continuous, 
	\eqref{eq:variation} implies that $\widetilde\phi$ is $C^1$.
	Then, using again \eqref{eq:glob_theta_g},
	we obtain that $\widetilde\theta$ is of class $C^1$
	and by \eqref{eq:variation} we get that $\widetilde\phi$ if of class $C^2$.
	Hence, by a standard argument, we obtain 
	\begin{equation}
		\label{eq:glob_phi}
		k\widetilde\phi'' = \widetilde\phi - \widetilde\theta,
	\end{equation}
	and iterating \eqref{eq:glob_theta_g} and \eqref{eq:glob_phi}
	we obtain the desired regularity.

	\noindent If $V$ is of class $C^{\infty}$,
	then by induction both $\widetilde\phi$ and $\widetilde\theta$
	are of class $C^\infty$.

	\noindent Assume finally that $V$ is real-analytic.
	Applying the real-analytic version of the inverse function theorem
	(see e.g. \cite{analyticreal}, p. 47)
	to \eqref{_theta_},
	we can replace $\theta$ in \eqref{eq:variation} by an analytic
	function $G(\phi,x)$ to obtain the boundary value problem:
	\begin{equation*}
		\begin{cases}
			-k\phi''+\phi-G(\phi,x)=0,\\
			\phi(0)=0,\\
			\phi'(1)=0,
		\end{cases}
		\label{bvp}
	\end{equation*}
	which will be solved pointwise by $\widetilde{\phi}$.
	By Cauchy-Kovalevskaya theorem
	(for an ODE version, which is in fact a particular case,
	see for instance \cite{teschl}, theorem 4.1)
	we obtain $\widetilde\phi\in C^\omega([0,1])$ as well,
	whence $\widetilde\theta\in C^\omega([0,1])$ too.
\end{proof}

\noindent From the previous proof, it is clear that the regularity of 
local minimizers of $F$ depends on the possibility to invert the function $f$,
which is ensured if \eqref{deriv_} holds.
As a consequence, we have the following result.
\begin{cor}
	\label{cor:regul_loc_min}
	Let $V\colon [0,1]\times\mathbb{R} \to \mathbb{R}$
	be a bounded function of class $C^k$, with $k \ge 2$,
	such that 
	\[
		\frac{\partial^2 V}{\partial \theta^2}(x,\theta) \ne 1,
		\quad \forall(x,\theta)\in [0,1]\times \mathbb{R}.
	\]
	If $(\widetilde\phi,\widetilde\theta)$ is a local minimizer of $F$,
	then $\widetilde\theta \in C^{k-1}(\mathbb{R})$
	and $\widetilde\phi \in C^{k + 1}(\mathbb{R})$.
	Moreover,
	if $V$ is a $C^\infty$ ($C^\omega$) function,
	then both $\widetilde{\phi}$ and $\widetilde{\theta}$ are
	$C^\infty$ ($C^\omega$) functions.
\end{cor}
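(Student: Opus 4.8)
The plan is to reduce Corollary \ref{cor:regul_loc_min} to the mechanism already extracted in the proof of Proposition \ref{prop:regularity_global_min}, the only genuine difference being that here we work with a \emph{local} rather than a global minimizer, and that the global bound $\widetilde\theta\in[0,a]$ supplied by Lemma \ref{2} is no longer available. The key observation is that the regularity argument never really used the confinement to $[0,a]$ per se; it only used that the map $\theta\mapsto f(x,\theta):=\theta-\frac{\partial V}{\partial\theta}(x,\theta)$ is locally invertible with inverse of the same regularity as $f$, and that this invertibility follows from $\frac{\partial f}{\partial\theta}=1-\frac{\partial^2V}{\partial\theta^2}\neq0$. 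Under the present hypothesis this nonvanishing holds on all of $[0,1]\times\mathbb{R}$, so the bootstrap should go through verbatim once the first-order condition is established.

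Concretely, I would first record that, since $(\widetilde\phi,\widetilde\theta)$ is a local minimizer, for a.e.\ $x\in[0,1]$ the value $\widetilde\theta(x)$ minimizes $\theta\mapsto H(x,\widetilde\phi(x),\theta)=-\widetilde\phi(x)\theta+\frac{\theta^2}{2}-V(x,\theta)$ over $\mathbb{R}$ (a pointwise minimization, exactly as in \eqref{K}), so that the stationarity condition \eqref{_theta_}, namely
\begin{equation*}
	-\widetilde\phi(x)+\widetilde\theta(x)-\frac{\partial V}{\partial\theta}(x,\widetilde\theta(x))=0,
\end{equation*}
holds a.e. Here the fact that the minimum exists and that $\widetilde\theta$ is essentially bounded should be argued just as in the proof of Proposition \ref{th1}: a minimizing pair has $\|\widetilde\phi\|_{C^0}$ bounded, and the coercivity of $\frac{\theta^2}{2}$ against the bounded $V$ forces $|\widetilde\theta|$ to lie in a compact interval $[-D,D]$ a.e. The global condition $\frac{\partial^2V}{\partial\theta^2}\neq1$ then makes $f(x,\cdot)$ strictly monotone on all of $\mathbb{R}$, hence globally invertible in $\theta$, so the inverse function theorem yields a $C^{k-1}$ function $g$ with $\widetilde\theta(x)=g(x,\widetilde\phi(x))$.

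From this point the argument is identical to the global case: substituting $\widetilde\theta=g(x,\widetilde\phi)$ into the Euler--Lagrange identity \eqref{eq:variation} and bootstrapping via \eqref{eq:glob_theta_g} and \eqref{eq:glob_phi} raises the regularity of $\widetilde\phi$ and $\widetilde\theta$ one step at a time, giving $\widetilde\theta\in C^{k-1}$ and $\widetilde\phi\in C^{k+1}$; the $C^\infty$ and $C^\omega$ conclusions follow by the same induction and Cauchy--Kovalevskaya argument respectively. I expect the main subtlety—rather than a deep obstacle—to be justifying the pointwise first-order condition \eqref{_theta_} for a local minimizer. For a \emph{global} minimizer this is immediate because $\widetilde\theta(x)$ minimizes $H(x,\widetilde\phi(x),\cdot)$ separately for each $x$; for a local minimizer one must check that the same pointwise characterization still holds, i.e.\ that local optimality in the product topology of $\mathfrak{S}$ still forces $\widetilde\theta(x)$ to be a pointwise minimizer of $H(x,\widetilde\phi(x),\cdot)$ almost everywhere. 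This follows because $\theta$ enters $F$ only through the integral $\int_0^1 H(x,\widetilde\phi,\theta)\,\dx$ with no coupling across different $x$ and with $\theta$ free in the full $L^2$-ball, so a measurable-selection/contradiction argument (perturbing $\widetilde\theta$ on a small positive-measure set toward a strictly lower value of $H$) shows any local minimizer must satisfy \eqref{_theta_} a.e.; once this is in hand, the remainder is exactly the earlier proof.
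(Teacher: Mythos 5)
Your proposal is correct and follows essentially the same route as the paper, which simply observes that the proof of Proposition \ref{prop:regularity_global_min} carries over verbatim once the global condition $\frac{\partial^2 V}{\partial\theta^2}\ne 1$ replaces the confinement of $\widetilde\theta$ to $[0,a]$. In fact you supply a detail the paper leaves implicit, namely the small-set perturbation argument showing that a merely \emph{local} minimizer in the $L^2$ factor still satisfies the pointwise first-order condition \eqref{_theta_} almost everywhere.
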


\noindent Lemma \ref{2} and Proposition \ref{prop:regularity_global_min}
apply to the problem \eqref{1} with
\[
	V(x,\theta)=b(1-x)\sin\theta
	\quad\text{and}\quad
	a=\frac{\pi}{2}.
\]
As a consequence, we can give the following result.
\begin{lemma}
	\label{lem:monotonicity}
	If $(\widetilde{\phi},\widetilde{\theta})$ solves problem \eqref{F___}-\eqref{functionalF}
	with $V(x,\theta)=b(1-x)\sin\theta$,
	then $\widetilde{\phi}$ is strictly increasing.
\end{lemma}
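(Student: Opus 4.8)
The plan is to combine the Euler--Lagrange equation for $\widetilde\phi$ with the pointwise optimality relation for $\widetilde\theta$ to show that $\widetilde\phi$ is concave with $\widetilde\phi'(1)=0$, deduce from this that $\widetilde\phi$ is non-decreasing, and finally invoke analyticity to rule out flat pieces and thus obtain strict monotonicity.

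First I would record that, as observed just before the statement, Lemma~\ref{2} and Proposition~\ref{prop:regularity_global_min} apply here with $a=\pi/2$: indeed $\frac{\partial^2 V}{\partial\theta^2}=-b(1-x)\sin\theta\le 0<1$ on $[0,1]\times[0,\pi/2]$, so $\widetilde\theta,\widetilde\phi$ take values in $[0,\pi/2]$ and are real-analytic on $[0,1]$, with $\widetilde\phi$ solving \eqref{eq:glob_phi}, i.e. $k\widetilde\phi''=\widetilde\phi-\widetilde\theta$, under $\widetilde\phi(0)=0$ and the natural boundary condition $\widetilde\phi'(1)=0$ at the free endpoint.

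Next I would use the optimality condition \eqref{_theta_} for $\widetilde\theta$, which here reads $\widetilde\phi-\widetilde\theta=-b(1-x)\cos\widetilde\theta$ since $\frac{\partial V}{\partial\theta}=b(1-x)\cos\theta$. Substituting into \eqref{eq:glob_phi} yields
\[
	k\widetilde\phi''=-b(1-x)\cos\widetilde\theta .
\]
Because $\widetilde\theta(x)\in[0,\pi/2]$ forces $\cos\widetilde\theta\ge0$, and $1-x\ge0$ on $[0,1]$, the right-hand side is non-positive, so $\widetilde\phi$ is concave; since $\widetilde\phi'$ is then non-increasing and $\widetilde\phi'(1)=0$, it follows that $\widetilde\phi'(x)\ge\widetilde\phi'(1)=0$ for every $x\in[0,1]$, whence $\widetilde\phi$ is non-decreasing.

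The step I expect to be the main obstacle is upgrading this to \emph{strict} monotonicity, and this is exactly where analyticity enters. I would argue by contradiction: if $\widetilde\phi'(x_0)=0$ for some $x_0\in[0,1)$, then from $\widetilde\phi'$ non-increasing together with $\widetilde\phi'(1)=0$ one gets $0\ge\widetilde\phi'(x)\ge0$ on $[x_0,1]$, so $\widetilde\phi'\equiv0$ and $\widetilde\phi$ is constant there; on the subinterval $(x_0,1)$, where $1-x>0$, the displayed identity would then force $\cos\widetilde\theta\equiv0$, hence $\widetilde\theta\equiv\pi/2$ and therefore $\widetilde\phi=\widetilde\theta-b(1-x)\cos\widetilde\theta\equiv\pi/2$. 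Since $\widetilde\phi$ is real-analytic on $[0,1]$ and constant on a subinterval of positive length, it would be identically equal to $\pi/2$, contradicting $\widetilde\phi(0)=0$. Thus $\widetilde\phi'(x)>0$ for all $x\in[0,1)$, and integrating this strict positivity shows that $\widetilde\phi$ is strictly increasing on $[0,1]$.
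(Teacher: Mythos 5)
Your proof is correct, but it follows a genuinely different route from the paper's. You derive strict monotonicity from the Euler--Lagrange system: combining $k\widetilde\phi''=\widetilde\phi-\widetilde\theta$ with the pointwise optimality relation $\widetilde\phi-\widetilde\theta=-b(1-x)\cos\widetilde\theta$ and the range $\widetilde\theta\in[0,\pi/2]$ from Lemma~\ref{2} gives concavity, which together with the natural boundary condition $\widetilde\phi'(1)=0$ yields $\widetilde\phi'\ge0$, and analyticity then excludes flat pieces. The paper instead argues by direct energy comparison: it uses analyticity only to reduce to excluding an interval $[\alpha,\beta]$ of strict decrease, and then builds an explicit competitor $(f,g)$ with $f=\max\{\widetilde\phi(\alpha),\widetilde\phi\}$ on $[\alpha,1]$ that strictly lowers each term of the functional, contradicting minimality. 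Your argument is shorter and more quantitative (it actually shows $\widetilde\phi'>0$ on $[0,1)$ and identifies the sign of $\widetilde\phi''$), but it leans on the full regularity theory of Proposition~\ref{prop:regularity_global_min} and on the natural boundary condition at $x=1$, which the paper only records inside the analytic boundary value problem; you should spell out its one-line derivation (take variations $\xi\in H^1_*$ with $\xi(1)\ne0$ and integrate by parts using \eqref{eq:glob_phi}). The paper's truncation argument avoids the boundary condition entirely and is the kind of comparison that survives with less regularity, which is why the same scheme is reused later (e.g.\ in Lemma~\ref{lem:noTouch-after-xlambda}, which applies Lemma~\ref{lem:monotonicity} after a reflection). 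Both proofs are sound.
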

\begin{proof}

	Since for every $(x,\theta) \in [0,1]\times[0,\pi/2]$ we have
	\[
		\frac{\partial^2 V}{\partial \theta^2}(x,\theta) 
		= - b(1-x)\sin\theta \le 0,
	\]
	we can apply Proposition \ref{prop:regularity_global_min}
	and $\widetilde\phi$ is an analytic function.
	Therefore, it is piecewise monotonic and
	cannot be constant on an open interval without being constant
	(and thus equal to zero, as $\widetilde{\phi}(0)=0$) on $[0,1]$. 
	We therefore just have to exclude that there exist $0\le \alpha<\beta\le 1$
	such that $\widetilde{\phi}$ is strictly decreasing on $[\alpha,\beta]$.
	Suppose now the contrary.
	Then define, on the interval $[\alpha,1]$, the functions
	\[
		f(x)\coloneqq\max\{\widetilde{\phi}(\alpha),\widetilde{\phi}(x)\}
	\]
	and
	\[
		g(x)\coloneqq
		\begin{cases}
			f(x), &		\mbox{if } f(x)\ne \widetilde\phi(x), \\
			\widetilde\theta(x), &		\mbox{otherwise.} 
		\end{cases}
	\]
	It is easily seen that $f\in H^1_*$ and $g\in L^2$.
	Moreover, $0\le \widetilde{\phi}(x)\le f(x)\le \frac \pi 2$
	and $0\le \widetilde{\theta}(x)\le g(x) \le \frac \pi 2$.
	We have further that, a.e. on $[0,1]$, $f'(x)\le \widetilde{\phi}'(x)$
	and that $|f-g|\le |\widetilde{\phi}-\widetilde{\theta}|$.
	Since $\sin(\cdot)$ is strictly increasing in $[0,\frac \pi 2]$,
	we also have $\sin{\widetilde{\theta}}\le \sin{g}$.
	It follows that replacing $(\widetilde{\phi},\widetilde{\theta})$ by $(f,g)$ on $[\alpha,1]$
	the value of the functional \eqref{functionalF} decreases,
	which is absurd.
\end{proof}

\noindent Fig.~\ref{fig:global-minimizer-Tim} shows a numerical computation of
the global minimizer of $F$ with $V = b(1-x)\sin\theta$,
with $b = 1$ and $k = 0.01$.
It can be seen that
$\widetilde\phi$ is strictly increasing,
as it is ensured by Lemma \ref{lem:monotonicity}.

\begin{figure}[h]
	\centering
	\includegraphics[width = 0.7\textwidth]{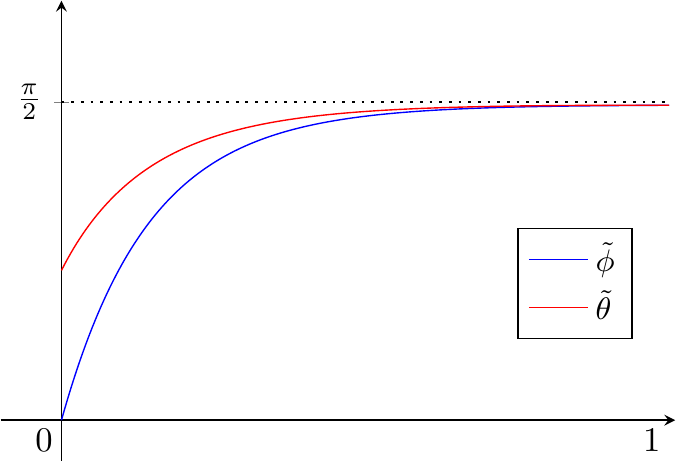}
	\caption{
		A numerically evaluated solution of \eqref{1}
		with $V(x,\theta) = b(1-x)\sin\theta$,
		$b=1$ and $k=0.01$.
	}
	\label{fig:global-minimizer-Tim}
\end{figure}

\noindent We end this section with the following result,
which stems from the proof of 
Proposition \ref{prop:regularity_global_min}
and it will be useful for the study of local minimizers 
different from the global one.
\begin{cor}
	\label{cor:Theta}
	Let $V(x,\theta) = b(1-x)\sin\theta$ and $b <1$.
	Then there exists a unique map
	$\Theta\colon H^1_* \to L^2$
	such that 
	for all $(\phi,\theta) \in H^1_*\times L^2$ we have
	\[
		F(\phi,\Theta(\phi)) \le F(\phi,\theta).
	\]
\end{cor}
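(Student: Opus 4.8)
The plan is to realize $\Theta(\phi)$ through the pointwise minimization already implicit in the proof of Proposition \ref{th1}, and to extract \emph{uniqueness} from the hypothesis $b<1$, which forces strict convexity in $\theta$. Recalling the decomposition used there, for fixed $\phi\in H^1_*$ one has
\[
	F(\phi,\theta)=\int_0^1\Big(\tfrac{k}{2}\phi'^2+\tfrac{\phi^2}{2}\Big)\dx+\int_0^1 H(x,\phi(x),\theta(x))\,\dx,
\]
with $H(x,\phi,\theta)=-\phi\theta+\tfrac{\theta^2}{2}-V(x,\theta)$. Since the first integral does not depend on $\theta$, minimizing $F(\phi,\cdot)$ over $L^2$ reduces to minimizing the second integral, which decouples into the pointwise problems $\min_{\theta\in\mathbb{R}}H(x,\phi(x),\theta)$, one for each $x\in[0,1]$.

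The key step is to observe that $b<1$ makes each pointwise problem uniformly convex. With $V(x,\theta)=b(1-x)\sin\theta$ one computes
\[
	\frac{\partial^2 H}{\partial\theta^2}(x,\phi,\theta)=1-\frac{\partial^2 V}{\partial\theta^2}(x,\theta)=1+b(1-x)\sin\theta\ \ge\ 1-b\ >\ 0,
\]
uniformly in $(x,\phi,\theta)$, since $0\le 1-x\le 1$ and $|\sin\theta|\le 1$. This lower bound already yields $(1-b)$-strong convexity of $\theta\mapsto H(x,\phi,\theta)$, hence both coercivity and the existence of a \emph{unique} minimizer $\theta_\phi(x)$, characterized by the stationarity condition $\theta-b(1-x)\cos\theta=\phi(x)$ (cf.\ \eqref{_theta_}).

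I would then set $\Theta(\phi):=\theta_\phi$ and check it lands in $L^2$. Writing $f(x,\theta):=\theta-b(1-x)\cos\theta$, the bound $\partial_\theta f=1+b(1-x)\sin\theta\ge 1-b>0$ shows that $f(x,\cdot)$ is a strictly increasing homeomorphism of $\mathbb{R}$, so $\theta_\phi(x)=f(x,\cdot)^{-1}(\phi(x))$; since $\phi\in H^1_*\subset C^0$ is bounded and the stationarity condition gives $|\theta_\phi(x)|\le|\phi(x)|+b$, the function $\theta_\phi$ is bounded, while its continuity in $x$ follows from the implicit function theorem applied to $f$ (using $\partial_\theta f\ne 0$). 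Thus $\Theta(\phi)\in C^0([0,1])\subset L^2$, and $\Theta$ is well defined on all of $H^1_*$.

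Finally, uniqueness of the map $\Theta$ is immediate from the pointwise uniqueness above: if a map $\widehat\Theta$ satisfies $F(\phi,\widehat\Theta(\phi))\le F(\phi,\theta)$ for every $\theta\in L^2$, then $\widehat\Theta(\phi)$ globally minimizes $F(\phi,\cdot)$; but $\int_0^1 H(x,\phi,\theta(x))\dx\ge\int_0^1 H(x,\phi,\theta_\phi(x))\dx$ for all $\theta\in L^2$, with equality if and only if $\theta(x)=\theta_\phi(x)$ for almost every $x$, so $\widehat\Theta(\phi)=\theta_\phi=\Theta(\phi)$ in $L^2$. The main obstacle is precisely the convexity estimate: once the uniform lower bound $\partial^2_\theta H\ge 1-b>0$ is secured, existence, the $L^2$-regularity of $\Theta(\phi)$, and uniqueness all follow routinely, and the hypothesis $b<1$ enters only to guarantee this bound.
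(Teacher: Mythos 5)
Your proposal is correct and follows essentially the same route as the paper: both reduce to the pointwise problem $\min_\theta H(x,\phi(x),\theta)$, use $b<1$ to get $\partial_\theta\bigl(\theta-\partial_\theta V\bigr)=1+b(1-x)\sin\theta\ge 1-b>0$, invert $f(x,\cdot)$ to define $\Theta(\phi)(x)=g(x,\phi(x))$, and note continuity of $\phi$ to land in $L^2$. If anything, your version is more complete, since you make explicit the strict-convexity argument that turns the stationarity condition into a genuine global minimizer and gives a.e.\ uniqueness of the selection, points the paper leaves implicit.
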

\begin{proof}
	As in the proof of Proposition \ref{prop:regularity_global_min},
	let us set
	\[
		f(x,\theta) =\theta - \frac{\partial V}{\partial \theta}(x,\theta) =\theta + b(1-x)\cos\theta.
	\]
	Under the hypothesis $b < 1$, the function $f$ is strictly increasing.
	By the inverse function theorem,
	there exists a unique analytic function $g\colon [0,1]\times\mathbb{R} \to \mathbb{R}$
	such that
	\[
		f(x,g(x,\phi)) - \phi = 0
	\]
	for every $x \in [0,1]$ and $\phi \in \mathbb{R}$.
	Hence, we define the map $\Theta\colon H^1_* \to L^2$ as follows:
	\begin{equation*}
		(\Theta(\phi))(x) \coloneqq g(x,\phi(x)),
	\end{equation*}
	where we notice that, being $\phi \in H^1_* \subset C^0$,
	$\Theta(\phi)$ is continuous, hence belongs to $L^2$.
\end{proof}

\section{Local minimizers}
\label{sec:local_minimizers}

The study of local minimizers in elastostatics is typically
not easy,
and fully general methods for establishing the existence of local
minimizers which are not global ones,
as famously asked by J.M. Ball in Problem 9 of
\cite{ball}, have not yet been found.
In this section, we address the existence
of  local minimizers (different from the global one)
of a particular case of the functional defined in \eqref{Ftilde}, which we will indicate by $F_{b,k}\colon \mathfrak{S} \to \mathbb{R}$, defined as
\begin{equation}
	\label{eq:functionalF_sin}
	F_{b,k}(\phi,\theta)=
	\int_0^1 \bigg(\frac{k}{2}\phi'^2 + \frac{(\phi-\theta)^2}{2}
	-b(1-x)\sin\theta \bigg)\,
	\text{d}x.
\end{equation}
In particular, our main result is Theorem \ref{teo:local-min},
which ensures the existence of a local minimizer $(\widetilde\phi,\widetilde\theta)$
such that $\widetilde\phi(x) < 0$ for all $x \in (0,1]$.
We therefore extend the results of \cite{Battista,Euler_,Euler__},
where similar local minimizers where found for
nonlinear Euler beams under distributed load and
nonlinear Timoshenko beams under concentrated end-load (which leads to an autonomous variational problem).
\begin{theorem}
	\label{teo:local-min}
	Let $F_{b,k}\colon\mathfrak{S}\to \mathbb{R}$ be as in \eqref{eq:functionalF_sin}.
	If both $b$ and $k/b$ are sufficiently small,
	then
	there exists a local minimizer $(\widetilde\phi,\widetilde\theta)$ of $F_{b,k}$ such that
	\[
		\widetilde\phi(x) < 0,
		\qquad \forall x \in (0,1].
	\]
\end{theorem}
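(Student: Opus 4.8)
The plan is to eliminate $\theta$ and reduce to a one-field problem on $H^1_*$, where a multi-well structure becomes visible. Since $b$ is small (in particular $b<1$), Corollary \ref{cor:Theta} furnishes the map $\Theta$, so I set $\bar F(\phi):=F_{b,k}(\phi,\Theta(\phi))$ and minimize $\bar F$ over $H^1_*$. A crucial preliminary remark is that a local minimizer of $\bar F$ automatically produces a local minimizer of $F_{b,k}$ on $\mathfrak{S}$: for $(\phi',\theta')$ with $\phi'$ close to $\phi$ in $H^1_*$ one has $F_{b,k}(\phi',\theta')\ge F_{b,k}(\phi',\Theta(\phi'))=\bar F(\phi')\ge\bar F(\phi)$, the last inequality holding on a small $H^1_*$-ball and the bound being uniform in $\theta'$. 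Eliminating $\theta$ perturbatively for small $b$ gives $\bar F(\phi)=\int_0^1\!\big(\tfrac{k}{2}\phi'^2-b(1-x)\sin\phi\big)\dx+o(b)$, i.e. to leading order a bending energy plus the multi-well potential $-b(1-x)\sin\phi$, whose minima sit at $\phi=\pi/2+2\pi n$. The global minimizer lives in the well at $\phi=\pi/2$ and is nonnegative (Lemma \ref{lem:monotonicity}); the local minimizer I seek should instead live in the first \emph{negative} well, near $\phi=-3\pi/2$.

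To capture that well I would use constrained minimization. Let $\mathcal{A}:=\{\phi\in H^1_*:\ -2\pi\le\phi(x)\le 0\ \forall x\}$, a closed convex (hence weakly closed) subset of $H^1_*$ that contains the target well $\phi\approx-3\pi/2$ while excluding both the positive global minimizer and the deeper wells. Coercivity (the term $\tfrac{k}{2}\|\phi'\|_{L^2}^2$ controls the $H^1_*$-norm, the rest of the integrand being bounded below) together with weak lower semicontinuity yields a minimizer $\widetilde\phi$ of $\bar F$ over $\mathcal{A}$ by the direct method. The one non-routine point here is the lower semicontinuity of the lower-order terms: if $\phi_n\rightharpoonup\phi$ in $H^1_*$ then $\phi_n\to\phi$ in $C^0$ by the compact embedding $H^1\hookrightarrow C^0$, whence $\Theta(\phi_n)(x)=g(x,\phi_n(x))\to g(x,\phi(x))$ in $L^2$ and these terms pass to the limit.

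The crux is to show that $\widetilde\phi$ is nontrivial and does not touch the box boundary on $(0,1]$, so that the constraint is inactive and $\widetilde\phi$ is a genuine free local minimizer; this is exactly where both smallness hypotheses enter. I would exhibit a boundary-layer competitor that transitions from $\phi(0)=0$ across the potential barrier at $\phi=-\pi/2$ into the well at $\phi=-3\pi/2$ over a layer of width $\sim\sqrt{k/b}$; its energy is a potential gain of order $b$ (roughly $-\tfrac{b}{2}$) plus a transition cost of order $\sqrt{kb}=b\sqrt{k/b}$. Since $\bar F(0)=O(b^2)$ is negligible beside the $O(b)$ gain, for $k/b$ small the well competitor strictly beats the trivial state, forcing $\widetilde\phi\not\equiv 0$ and trapping it in the negative well. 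A truncation/comparison argument in the spirit of Lemma \ref{2} then shows the lower constraint $\phi=-2\pi$ is never attained. Finally, the smoothness supplied by Corollary \ref{cor:regul_loc_min} (applicable since $|\partial^2_\theta V|=|b(1-x)\sin\theta|\le b<1\ne 1$), combined with a sign analysis of the Euler–Lagrange relation $k\widetilde\phi''=\widetilde\phi-\Theta(\widetilde\phi)$ — whose right-hand side is governed by $\cos(\Theta\widetilde\phi)$, enforcing concavity while $\Theta\widetilde\phi\in(-\pi/2,\pi/2)$ and convexity while $\Theta\widetilde\phi\in(-3\pi/2,-\pi/2)$ — rules out an interior turning point or a return toward $0$, so $\widetilde\phi$ is monotone decreasing and thus $\widetilde\phi(x)<0$ for all $x\in(0,1]$.

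It remains to upgrade constrained minimality to free local minimality and to isolate the expected main obstacle. On $(0,1]$ the minimizer is strictly interior to $\mathcal{A}$, so it solves the free Euler–Lagrange equation there and is a critical point of $\bar F$; the only delicate point is near $x=0$, where an admissible $\xi\in H^1_*$ (with $\xi(0)=0$) may push $\widetilde\phi+t\xi$ slightly above $0$ on a vanishingly small interval. A direct estimate showing that the bending cost of such a small positive bump dominates any potential gain closes this gap, so $\widetilde\phi$ is a local minimizer of $\bar F$ and $(\widetilde\phi,\Theta(\widetilde\phi))$ is the desired local minimizer of $F_{b,k}$. I expect the principal difficulty to be precisely the crux energy comparison: constructing the boundary-layer competitor and controlling the reduced potential $W(x,\phi)$ and the transition cost uniformly enough to pin down an explicit smallness regime for $b$ and $k/b$ in which the constrained minimizer is trapped in the negative well rather than collapsing to the trivial configuration.
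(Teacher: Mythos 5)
Your overall architecture (minimize a reduced functional over a constrained set, show the constraint is inactive, then upgrade to free local minimality) mirrors the paper's, but the two steps where you diverge are exactly the two places where the argument is hardest, and in both your version has a genuine gap. The decisive one is the final upgrade from constrained to local minimality. Your constraint set is the box $\mathcal{A}=\{-2\pi\le\phi\le 0\}$, and you propose to handle perturbations that push $\widetilde\phi$ above $0$ near $x=0$ by ``a direct estimate showing that the bending cost of such a small positive bump dominates any potential gain.'' This claim is false as stated: writing $U(x,\phi)$ for the reduced (pointwise) potential $\tfrac12(\phi-\Theta\phi)^2-b(1-x)\sin\Theta\phi$, one computes $\partial_\phi U(x,\phi)=-b(1-x)\cos\big(g(x,\phi)\big)$, which is strictly negative at $\phi=0$ for $x<1$; the landscape pulls $\phi$ \emph{upward} through $0$ toward the positive global well. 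A bump of height $h$ supported on an interval of length $\delta$ touching $x=0$ costs bending energy of order $kh^2/\delta$ and gains potential of order $bh\delta$, so after optimizing it \emph{beats} the truncated competitor by an amount of order $b^2\delta^3/k$, strictly positive for every $\delta>0$. Since you have no quantitative stability estimate comparing $F_{b,k}$ at the truncation with $F_{b,k}$ at $\widetilde\phi$, the chain of inequalities never closes. This is precisely the obstruction the paper isolates in the remark following Lemma \ref{lemma_N} (the sequence $f_k$ with $\lVert f_k\rVert_{H^1}\to 0$ showing $\widetilde\phi\in\partial\mathfrak{C}^*$), and it is why the paper replaces the direct estimate by a $\Gamma$--convergence contradiction argument with the relaxed obstacles $\phi^*_{\lambda,\epsilon}=\phi^*_\lambda+\epsilon$: if arbitrarily close competitors had lower energy, the minimizers of the relaxed problems would be forced to touch $-\pi$ at $x_{\lambda,\epsilon}$ and would converge to a constrained minimizer touching $-\pi$ at $x_\lambda$, contradicting Lemma \ref{lemma_N}.

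The secondary gap is that the flat obstacle $\phi\le 0$ does not support the interior non-contact argument. For the paper's obstacle the key point is the strict curvature comparison $\widetilde\phi''=-\lambda(1-x)\cos\widetilde\theta\ge-\lambda(1-x)>(\phi^*_\lambda)''=-\lambda(1-x)-1$, which turns ``the contact set has measure zero'' into ``the contact set is empty'' (Corollary \ref{cor:notTouch}); the extra $-1$ in $(\phi^*_\lambda)''$ is put there for exactly this purpose, and the same second-derivative structure is what makes the $\Gamma$--convergence step work (Remark \ref{rem:onlySecondDeriv}). With the obstacle $\equiv 0$ one has $\widetilde\phi''<0$ near a contact point (where $\widetilde\theta\approx 0$), so $\widetilde\phi$ can have an interior local maximum with value exactly $0$ and the comparison gives no contradiction. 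Your well-trapping heuristic (potential gain of order $b$ versus transition cost of order $\sqrt{kb}$, requiring $k/b$ small) is sound in spirit and is a more elementary route to nontriviality than the paper's two-step limit ($b\to 0$ to the Euler functional $F_\lambda$, then $\lambda\to\infty$ to an autonomous pendulum problem with the explicit derivative comparison $-\sqrt{E}$ versus $-2\sqrt{\pi}$), but the monotonicity and ``no return towards $0$'' claims still need the reflection arguments of Lemmas \ref{lem:monotonicity} and \ref{lem:noTouch-after-xlambda} rather than the pointwise sign analysis of $\cos(\Theta\widetilde\phi)$ you sketch. As written, the proposal does not yield a proof.
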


\noindent A numerically evaluated local minimizer when $b=1$ and $k=0.01$ is shown in Fig.~\ref{figg__}.
\begin{figure}[h]
	\includegraphics[scale=1]{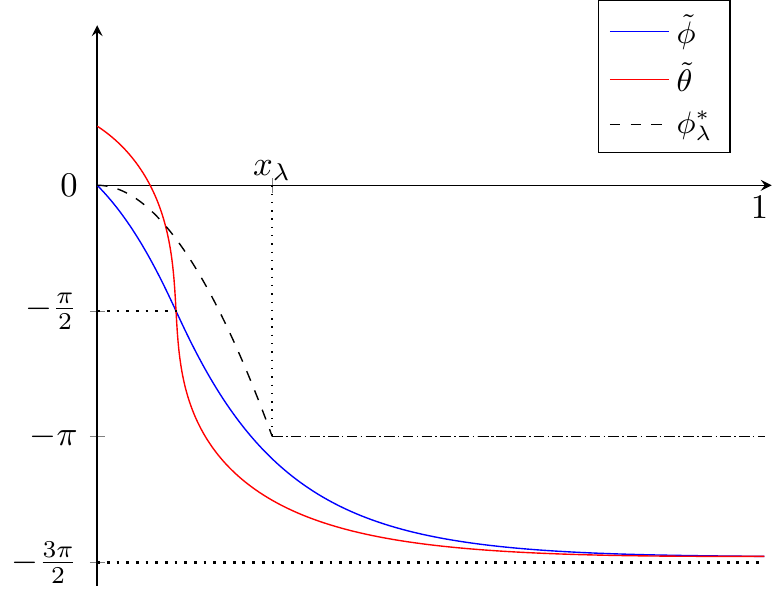}
	\caption{
		A numerically evaluated local minimizer of \eqref{eq:functionalF_sin}
		with $b=1$ and $k=0.01$.
	}
	\label{figg__}
\end{figure}

\noindent From the statement of Theorem \ref{teo:local-min} it 
is immediately clear that the ratio $k/b$ plays
a central role in the existence of local minimizers different
from the global one.
For the sake of presentation, for every fixed $b,k >0$ we
will indicate by $\lambda$ the inverse of that ratio, hence
\[
	\lambda = \frac{b}{k},
\]
so we will prove some of the following results 
provided that $\lambda$ is sufficiently large.

\noindent A key ingredient for our proof is the function $\phi^*_\lambda$,
which is given by the following definition.
Indeed,
as we are going to see during the different steps of the proof,
it provides a ``natural'' upper bound for the 
component $\phi$ of the local minimizers,
as it can be noted in Fig.~\ref{figg__}.
\begin{definition}
	For every $b,k > 0$ 
	we denote by $\lambda$ the ratio $b/k$ and
	we define the function
	$\phi_{\lambda}\colon [0,1] \to \mathbb{R}$ as
	\begin{equation}
		\label{eq:def_phiStar}
		\phi_{\lambda}^*(x)=
		\max\bigg\{
			\frac{\lambda}{2}x^2 \left(\frac{x}{3} -1\right)
			- \frac{1}{2}x^2,
			- \pi
		\bigg\}.
	\end{equation}
	We denote by $x_{\lambda}$ the least $x$ such that $\phi^* = -\pi$,
	that is:
	\[
		x_{\lambda}=
		\min\bigg\{x \in [0,1]: \phi^*_{\lambda}(x) = -\pi\bigg\},
	\]
	which is well defined if $\lambda$ is sufficiently large.
	Moreover, we set
	\begin{equation}
		\label{eq:def-C*lambda}
		\mathfrak{C}_{\lambda}^* \coloneqq
		\bigg\{
			\phi \in H^{1}_{*}: \phi(x) \le \phi_{\lambda}^*(x),
			\ \forall x \in [0,1]
		\bigg\}
	\end{equation}
	and
	\[
		\mathfrak{S}_{\lambda}^* \coloneqq
		\mathfrak{C}_{\lambda,\epsilon}^* \times 
		L^2 \subset \mathfrak{S}.
	\]
\end{definition}

\noindent The main idea of the proof of Theorem \ref{teo:local-min}
is showing that the global minimizer of $F_{b,k}$ in $\mathfrak{S}^*$,
denoted by $(\widetilde\phi,\widetilde\theta)$, is strictly less than $\phi^*_\lambda$,
except in $0$.
The special form of $\phi^*_\lambda$
implies that $\widetilde\phi$ can ``touch" it only at $x_\lambda$,
and this is proved in Subsection \ref{sub:31}.
Subsection \ref{sub:32} is devoted to prove
that $\widetilde\phi(x_\lambda)$
is actually also strictly less then $\phi^*_\lambda(x_\lambda) = -\pi$.
As a first step
we show that if $b$ is sufficiently small
and $b/k=\lambda$ remains constant,
then $\widetilde\phi$ is arbitrarily close,
with respect to the $C^1$ norm,
to the minimizer of the Euler beam problem, hence to
the global minimizer of 
\[
	\phi \mapsto
	\int_{0}^1\bigg(\frac{\phi'^2}{2} - \lambda(1-x)\sin\phi\bigg)\text{d}x,
\]
subject to $\phi(0) = 0$ and $\phi(x)\le \phi^*_\lambda(x)$. 
As a second step,
we prove that if $\lambda$ is sufficiently large,
and thus if $k/b$ is sufficiently small,
then such a minimizer is strictly less then $-\pi$ at $x_\lambda$.
In Subsection \ref{sub:33} we formally 
give the proof of Theorem \ref{teo:local-min},
recollecting all the previous results
and using a $\Gamma$--convergence argument
to show that $(\tilde\phi,\tilde\theta)$ is indeed
a local minimizer on the whole set $\mathfrak{S}$.

\subsection{General results for minimizers in $\mathfrak{S}_{\lambda}^*$}
\label{sub:31}
In this section we provide some results
that hold for the minimizers of $F_{b,k}$ in $\mathfrak{S}_{\lambda}^*$,
independently of $b,k > 0$. 
As a first step, we give the following existence result.
\begin{prop} 
	\label{prop:P*existence}
	For every $b,k > 0$,
	there exists a global minimizer of $F_{b,k}$ in $\mathfrak{S}_{\lambda}^*$.
\end{prop}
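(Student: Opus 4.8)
The plan is to mimic the proof of Proposition \ref{th1}, adjoining the convex constraint $\phi \in \mathfrak{C}_\lambda^*$ to the reduced one-dimensional minimization in $\phi$. The only genuinely new ingredient compared with Proposition \ref{th1} is the weak closedness of the constraint set, which I would obtain from the compact embedding $H^1 \hookrightarrow C^0([0,1])$, together with an honest check of coercivity.

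First I would reuse, essentially verbatim, the pointwise reduction of Proposition \ref{th1}. Setting
\[
	H(x,\phi,\theta) = -\phi\theta + \tfrac{\theta^2}{2} - b(1-x)\sin\theta,
	\qquad
	K(x,\phi) = \inf_{\theta\in\mathbb{R}} H(x,\phi,\theta),
\]
the same argument shows that $K$ is bounded and continuous and that, for each continuous $\phi$, the smallest pointwise minimizer $\theta_\phi(x)$ defines a bounded measurable function belonging to $L^2$. Since the constraint defining $\mathfrak{S}_\lambda^*$ involves only the first component, minimizing $F_{b,k}$ over $\mathfrak{S}_\lambda^*$ is equivalent to minimizing the reduced functional
\[
	J(\phi) := \int_0^1 \left(\frac{k}{2}\phi'^2 + \frac{\phi^2}{2} + K(x,\phi)\right)\text{d}x
\]
over $\phi \in \mathfrak{C}_\lambda^*$; any minimizer $\bar\phi$ of $J$ then yields the minimizer $(\bar\phi,\theta_{\bar\phi})$ of $F_{b,k}$ on $\mathfrak{S}_\lambda^*$, as $\theta_{\bar\phi}\in L^2$ because $\bar\phi$ is continuous and bounded.

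Next I would run the direct method on $J$ over $\mathfrak{C}_\lambda^*$. The set is nonempty (it contains $\phi^*_\lambda$ itself, which is Lipschitz and vanishes at $0$, hence lies in $H^1_*$), convex, and strongly closed in $H^1_*$; since weak $H^1$ convergence implies uniform convergence, the pointwise constraint $\phi \le \phi^*_\lambda$ passes to weak limits, so $\mathfrak{C}_\lambda^*$ is sequentially weakly closed. For coercivity I would note that completing the square gives $H(x,\phi,\theta) \ge -\phi^2/2 - b$, whence $\phi^2/2 + K(x,\phi) \ge -b$ and $J(\phi) \ge \frac{k}{2}\|\phi'\|_{L^2}^2 - b$; combined with the Poincaré inequality on $H^1_*$ (valid because $\phi(0)=0$) this makes $J$ coercive with respect to the full $H^1$ norm. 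A minimizing sequence $\phi_n \in \mathfrak{C}_\lambda^*$ is therefore bounded in $H^1$, and along a subsequence $\phi_n \rightharpoonup \bar\phi$ weakly in $H^1$ and uniformly on $[0,1]$, with $\bar\phi \in \mathfrak{C}_\lambda^*$.

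Finally I would invoke lower semicontinuity: the term $\frac{k}{2}\int\phi'^2$ is weakly lower semicontinuous as (a multiple of) the square of the $L^2$ norm, while the lower-order term $\int(\phi^2/2 + K(x,\phi))$ converges along the subsequence because $\phi_n \to \bar\phi$ uniformly and $K$ is continuous. Hence $\liminf_n J(\phi_n) \ge J(\bar\phi)$, so $\bar\phi$ minimizes $J$ on $\mathfrak{C}_\lambda^*$, which produces the desired global minimizer of $F_{b,k}$ on $\mathfrak{S}_\lambda^*$. I expect the main (though still mild) obstacle to be verifying coercivity honestly: because $K(x,\phi)$ behaves like $-\phi^2/2$ for large $\phi$, the $\phi^2/2$ term cannot supply coercivity on its own, and one must genuinely rely on the $\phi'^2$ term together with the clamped boundary condition through the Poincaré inequality.
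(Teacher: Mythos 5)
Your proof is correct and follows essentially the same route as the paper: run the direct method on the reduced problem from Proposition \ref{th1} and observe that the constraint set $\mathfrak{C}_{\lambda}^*$ is sequentially weakly closed in $H^1_*$. The only cosmetic difference is that you obtain weak closedness from the compact embedding $H^1\hookrightarrow C^0([0,1])$, whereas the paper invokes convexity together with closedness in the $L^\infty$ norm (Mazur-type argument); you also spell out the coercivity and lower-semicontinuity steps that the paper leaves implicit.
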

\begin{proof}
	The set $\mathfrak{C}_{\lambda}^*\subset H^1_*$
	is convex and closed with respect to the $L^{\infty}$ norm
	and therefore it is closed with respect to the weak convergence in $H^1$.
	Since the minimizing sequences weakly converge in $H^1$,
	their weak limit belongs to $\mathfrak{C}_{\lambda}^*$ (see for instance Theorem 7.3.7 in \cite{Kurdila}).
\end{proof}

\noindent Since $\mathfrak{S}_{\lambda}^*$ is a closed set with boundary,
a global minimizer does not satisfy the Euler-Lagrange equations in general.
However, the form of $\phi_{\lambda}^*$
allows us to prove that this is actually the case,
as stated by the following proposition, which is the main result of this subsection.
\begin{prop}
	\label{prop:local_EulerLagrange}
	Let $(\widetilde{\phi},\widetilde{\theta})$ be a global minimizer
	of $F_{b,k}$ in $\mathfrak{S}_{\lambda}^*$, then 
	\begin{equation}
		\label{eq:euler-lagrange-phi} 
		k \widetilde{\phi}''  = \widetilde{\phi} - \widetilde{\theta},
		\quad \text{a.e. on }[0,1].
	\end{equation}
\end{prop}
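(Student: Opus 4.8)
The plan is to read the minimization over $\mathfrak{S}_{\lambda}^*=\mathfrak{C}_{\lambda}^*\times L^2$ as a unilateral (obstacle) problem in the single variable $\phi$, constrained by $\phi\le \phi_\lambda^*$, with $\theta$ unconstrained. First I would note that, since the constraint acts only on $\phi$, the optimal $\widetilde\theta$ is obtained by pointwise minimization in $\theta$, exactly as in Proposition \ref{th1}: for a.e.\ $x$, $\widetilde\theta(x)$ minimizes $\theta\mapsto \frac{(\widetilde\phi(x)-\theta)^2}{2}-b(1-x)\sin\theta$, so that the first-order condition yields the pointwise identity
\[
	\widetilde\phi(x)-\widetilde\theta(x)=b(x-1)\cos\widetilde\theta(x),\qquad\text{a.e. on }[0,1].
\]
Moreover, for any feasible perturbation $\psi=\widetilde\phi+t\xi\in\mathfrak{C}_\lambda^*$ one has $F_{b,k}(\psi,\widetilde\theta)\ge \min_{\eta}F_{b,k}(\psi,\eta)\ge F_{b,k}(\widetilde\phi,\widetilde\theta)$, so when computing the first variation in $\phi$ I may keep $\widetilde\theta$ frozen.

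Next I would extract the variational inequality from admissible one-sided perturbations. As $\mathfrak{C}_\lambda^*$ is stable under downward perturbations, for every $\xi\in C^\infty_c((0,1))$ with $\xi\le 0$ the curve $\widetilde\phi+t\xi$ is feasible for small $t>0$, whence $\frac{d}{dt}\big|_{t=0^+}F_{b,k}(\widetilde\phi+t\xi,\widetilde\theta)\ge 0$, i.e.
\[
	\int_0^1\big(k\widetilde\phi'\xi'+(\widetilde\phi-\widetilde\theta)\xi\big)\,\dx\ge 0,\qquad\forall\,\xi\le 0.
\]
In distributional terms this means that the reaction $\rho:=k\widetilde\phi''-(\widetilde\phi-\widetilde\theta)$ is a nonnegative measure. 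On the open set $\{\widetilde\phi<\phi_\lambda^*\}$ two-sided perturbations are admissible, so $\rho$ vanishes there and $\widetilde\phi$ solves \eqref{eq:euler-lagrange-phi} classically; hence $\rho$ is supported on the contact set $E:=\{x\in[0,1]:\widetilde\phi(x)=\phi_\lambda^*(x)\}$. The whole statement therefore reduces to showing that $E$ is Lebesgue-null (it always contains $x=0$, and the subsequent analysis shows it can touch only at $x_\lambda$).

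This is where the tailored form of $\phi_\lambda^*$ enters, and it is the crux. On any portion of $E$ of positive measure one has $\widetilde\phi''=(\phi_\lambda^*)''$ a.e., since the second derivatives of two functions coincide a.e.\ on the set where the functions agree. On the polynomial branch $\phi_\lambda^*=\frac{\lambda}{2}x^2(\frac{x}{3}-1)-\frac12 x^2$ a direct computation gives $k(\phi_\lambda^*)''=b(x-1)-k$, so by the pointwise identity above
\[
	\rho=b(x-1)-k-b(x-1)\cos\widetilde\theta=b(x-1)\big(1-\cos\widetilde\theta\big)-k\le -k<0,
\]
because $x\le 1$ and $1-\cos\widetilde\theta\ge 0$; here the term $-\frac12 x^2$ in the definition of $\phi_\lambda^*$ is exactly what produces the strict gap $-k$. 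On the flat branch $\phi_\lambda^*\equiv-\pi$ one has $(\phi_\lambda^*)''=0$ and $\rho=\pi+\widetilde\theta$; but the derivative of the $\theta$-integrand at $\theta=-\pi$ equals $b(1-x)>0$, so the minimizing $\widetilde\theta$ lies strictly below $-\pi$ and again $\rho<0$. In either case $\rho$ would be strictly negative on a set of positive measure, contradicting $\rho\ge 0$. Hence $E$ is Lebesgue-null, the absolutely continuous part of $\rho$ vanishes, and $k\widetilde\phi''=\widetilde\phi-\widetilde\theta$ almost everywhere.

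The main obstacle I anticipate is the measure-theoretic bookkeeping on the contact set: justifying $\widetilde\phi''=(\phi_\lambda^*)''$ a.e.\ on $E$ (using the a.e.\ second-order differentiability available for $W^{2,1}$ functions together with the vanishing of the quadratic Taylor term at density points of $E$), and correctly reading off the Radon--Nikodym derivative of the reaction measure there, given that $\phi_\lambda^*$ itself fails to be $H^2$ at the corner $x_\lambda$. Isolating the single point $x_\lambda$ and the endpoints, and then treating the two branches separately, is the technical heart of the argument; by contrast, the sign computations above are the part that genuinely exploits the special definition of $\phi_\lambda^*$.
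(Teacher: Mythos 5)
Your overall strategy---read the minimization as an obstacle problem in $\phi$, show the reaction $\rho=k\widetilde\phi''-(\widetilde\phi-\widetilde\theta)$ is a nonnegative measure vanishing off the contact set, and then rule out a contact set of positive measure by computing the sign of $\rho$ there from $\widetilde\phi''=(\phi^*_\lambda)''$ and the algebraic relation $\widetilde\phi-\widetilde\theta=-b(1-x)\cos\widetilde\theta$---is exactly the paper's, and your polynomial-branch computation reproduces the paper's estimate $\tau\ge k>0$ up to sign conventions. The genuine gap is the step you yourself defer to ``measure-theoretic bookkeeping'': the identity $\widetilde\phi''=(\phi^*_\lambda)''$ a.e.\ on the contact set (via \cite{GT}, Lemma 7.7, or via approximate differentiability) presupposes that $\widetilde\phi''$ is a function, not merely a measure, on $[0,x_\lambda]$. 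A priori you only know $\widetilde\phi\in H^1$ and $\rho\ge 0$, so $\widetilde\phi'$ is $BV$ and $\widetilde\phi''$ may have a singular part; identifying its absolutely continuous density with $(\phi^*_\lambda)''$ on $E$ and reading off the Radon--Nikodym derivative of $\rho$ there is precisely what the paper's Lemma \ref{lem:regularity_Ploc_minimum} (a Marino--Scolozzi-type argument, the bulk of the subsection) makes possible by first proving $\widetilde\phi\in W^{2,\infty}$ separately on $[0,x_\lambda]$ and $[x_\lambda,1]$. Without that lemma or a worked-out $BV$ substitute, the central computation is not justified; and note that even after $|E|=0$ a Dirac mass of $\rho$ at $x_\lambda$ can genuinely survive (cf.\ Lemma \ref{lem:phiTouch}), so the ``a.e.'' in \eqref{eq:euler-lagrange-phi} only makes sense once the piecewise second-derivative regularity is in hand.

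Your treatment of the flat branch $\phi^*_\lambda\equiv-\pi$ also has a logical gap. From $h'(-\pi)=b(1-x)>0$, where $h(\theta)=\tfrac{(\theta+\pi)^2}{2}-b(1-x)\sin\theta$, you conclude that the pointwise minimizer $\widetilde\theta$ lies strictly below $-\pi$; but a positive derivative at $-\pi$ only shows $-\pi$ is not a minimizer and that nearby points below it do better---it does not exclude a global minimizer above $-\pi$, in which case $\rho=\pi+\widetilde\theta>0$ and your contradiction disappears. The claim is true, but it needs the reflection identity $h(-\pi-s)-h(-\pi+s)=-2b(1-x)\sin s<0$ for $s\in(0,\pi)$ together with a translation comparison for $\theta\ge 0$. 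The paper sidesteps this entirely: Lemma \ref{lem:noTouch-after-xlambda} applies the reflection $(\pi-\widetilde\phi,\pi-\widetilde\theta)$ and the monotonicity result of Lemma \ref{lem:monotonicity} to show $\widetilde\phi<-\pi$ strictly on $(x_\lambda,1]$, so the contact set meets the flat branch in at most the single point $x_\lambda$ and no sign computation is needed there.
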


\noindent Some preliminary definitions and results are required 
to prove Proposition \ref{prop:local_EulerLagrange}.
In fact,
an important step of the proof is showing that 
$\widetilde\phi$ is sufficiently smooth on the intervals
$[0,x_\lambda]$ and $[x_\lambda,1]$
(see Lemma \ref{lem:regularity_Ploc_minimum});
this can be achieved by 
exploiting the techniques given by \cite{MarinoScolozzi83},
which have been used in different contexts
to achieve the desired regularity of constrained minimizers
(see e.g. \cite{Corona2020JFPT,ggp}).
An important consequence of Proposition \ref{prop:local_EulerLagrange}
and of the special definition of $\phi^*_\lambda$
is that the constrained minimizer 
$(\widetilde\phi,\widetilde\theta) \in \mathfrak{S}^*_\lambda$
is such that 
$\widetilde\phi$ equals $\phi^*_\lambda$
on $0$ and, at most, on $x_\lambda$:
in other words
$\widetilde\phi(x) < \phi^*_\lambda(x)$ for all $x \ne 0,x_\lambda$.

\begin{definition}
	For every $\phi \in \mathfrak{C}_{\lambda}^*$,
	we define 
	the set of infinitesimal admissible variations of $\phi$ in
	$\mathfrak{C}_{\lambda}^*$, denoted by $\mathcal{V}_{\lambda}^*$,
	the set
	\begin{equation}
		\label{eq:calV-phi}
		\mathcal{V}_{\lambda}^*(\phi)\coloneqq 
		\bigg\{
			\xi \in H^{1}_*([0,1]):
			\xi(x) \le 0 \text{ if } \phi(x) = \phi_{\lambda}^*(x)
		\bigg\}.
	\end{equation}
\end{definition}

\begin{lemma}
	\label{lem:algebraic_theta}
	Let $(\widetilde{\phi},\widetilde{\theta})$ be a global minimizer
	of $F_{b,k}$ in $\mathfrak{S}_{\lambda}^*$, then 
	\begin{equation}
		\label{eq:algebraic_theta} 
		\widetilde{\phi} - \widetilde\theta = - b (1-x)\cos\widetilde\theta,
		\quad \text{a.e. on }[0,1].
	\end{equation}
\end{lemma}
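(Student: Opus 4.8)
The plan is to exploit the defining asymmetry of the admissible set $\mathfrak{S}_{\lambda}^* = \mathfrak{C}_{\lambda}^* \times L^2$: the constraint $\phi \le \phi_{\lambda}^*$ restricts only the first component, so the $\theta$-component may be varied freely throughout $L^2$. Fixing $\widetilde\phi$, the map $\theta \mapsto F_{b,k}(\widetilde\phi,\theta)$ therefore attains an unconstrained minimum over $L^2$ at $\theta = \widetilde\theta$. Since the integrand carries no derivative of $\theta$, this is in fact a pointwise minimization (exactly as in the construction of $K(x,\phi)$ in the proof of Proposition \ref{th1}), and the first variation in the $\theta$-direction must vanish.

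Concretely, I would fix an arbitrary test function $\eta \in C^\infty_c([0,1])$ and compute the derivative of $s \mapsto F_{b,k}(\widetilde\phi,\widetilde\theta + s\eta)$ at $s=0$. Expanding the quadratic term directly and differentiating the sine term gives
\[
	\frac{d}{ds}\bigg|_{s=0} F_{b,k}(\widetilde\phi,\widetilde\theta + s\eta)
	= \int_0^1 \Big( -(\widetilde\phi - \widetilde\theta) - b(1-x)\cos\widetilde\theta \Big)\,\eta\,\text{d}x.
\]
Because $\widetilde\theta$ is an interior (unconstrained) minimizer, this vanishes for \emph{every} such $\eta$, of either sign, so the fundamental lemma of the calculus of variations yields
\[
	-(\widetilde\phi - \widetilde\theta) - b(1-x)\cos\widetilde\theta = 0 \quad \text{a.e. on } [0,1],
\]
which rearranges to precisely \eqref{eq:algebraic_theta}. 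Note that the bracketed factor lies in $L^2$ (as $\widetilde\phi \in H^1_* \subset C^0$ and $\widetilde\theta \in L^2$), so the fundamental lemma applies without difficulty.

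The single technical point requiring care is the interchange of $\frac{d}{ds}$ and $\int_0^1$ in the term involving $\sin(\widetilde\theta + s\eta)$. I would justify it via the mean value theorem, writing $\sin(\widetilde\theta + s\eta) - \sin\widetilde\theta = s\,\eta\,\cos(\zeta_s)$ for an intermediate value $\zeta_s$, and then invoking dominated convergence with the $s$-independent majorant $b\,|\eta| \in L^1([0,1])$, using $|\cos|\le 1$. I expect this to be the only genuine obstacle, and a routine one: the derivative-free, pointwise dependence of the integrand on $\theta$ collapses the Euler--Lagrange condition to an algebraic relation, with no boundary terms and---crucially, since $\theta$ is genuinely unconstrained---an equality rather than a variational inequality.
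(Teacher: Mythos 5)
Your proposal is correct and follows essentially the same route as the paper: since the constraint in $\mathfrak{S}_{\lambda}^*$ acts only on the $\phi$-component, the first variation $\mathrm{d}F_{b,k}(\widetilde\phi,\widetilde\theta)[0,\eta]$ vanishes for every $\eta\in C^\infty_c([0,1])$, and the fundamental lemma gives \eqref{eq:algebraic_theta}. The paper states this in one line; your added justification of differentiating under the integral via dominated convergence is a harmless (and welcome) elaboration, not a different argument.
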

\begin{proof}
	For every $\eta\in C^\infty_0([0,1])\subset L^2([0,1])$ we have
	\[
		\text{d}F_{b,k}(\widetilde\phi,\widetilde\theta)[0,\eta]=
		-\int_{0}^{1}
		\bigg(\widetilde{\phi}-\widetilde{\theta} + b(1-x)\cos\widetilde\theta\bigg)\eta~ \text{d}x
		=0,
	\]
	from which \eqref{eq:algebraic_theta} follows.
\end{proof}
\begin{lemma}
	\label{lem:tildephi_ge_32pi}
	Let $(\widetilde{\phi},\widetilde{\theta})$ be a global minimizer
	of $F_{b,k}$ on $\mathfrak{S}_{\lambda}^*$, then 
	\begin{equation*}
		\widetilde\phi(x) \ge -\frac{3}{2}\pi,\quad \forall x \in [0,1].
	\end{equation*}
\end{lemma}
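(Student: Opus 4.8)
The plan is to argue by a truncation-and-contradiction scheme, in the spirit of Lemma~\ref{2}. The decisive observation is that the level $-\tfrac{3}{2}\pi$ is \emph{simultaneously} strictly below the constraint barrier, since $\phi^*_\lambda(x)\ge -\pi>-\tfrac{3}{2}\pi$ by \eqref{eq:def_phiStar}, and a point at which the load term is maximally favorable, because $\sin\!\left(-\tfrac{3}{2}\pi\right)=1=\max_{\mathbb R}\sin$. I would therefore introduce the truncated competitor
$$\bar\phi:=\max\Big\{\widetilde\phi,-\tfrac{3}{2}\pi\Big\},\qquad \bar\theta:=\max\Big\{\widetilde\theta,-\tfrac{3}{2}\pi\Big\},$$
and first check admissibility: $\bar\phi(0)=0$, $\bar\theta\in L^2$, and since $-\tfrac{3}{2}\pi<-\pi\le\phi^*_\lambda$ one has $\bar\phi\le\phi^*_\lambda$, so that $(\bar\phi,\bar\theta)\in\mathfrak S^*_\lambda$.

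Next I would show that each of the three terms of $F_{b,k}$ does not increase when $(\widetilde\phi,\widetilde\theta)$ is replaced by $(\bar\phi,\bar\theta)$. For the Dirichlet term this follows from the standard truncation identity $\bar\phi'=\widetilde\phi'\,\mathbf{1}_{\{\widetilde\phi>-\frac{3}{2}\pi\}}$ a.e., which gives $\int_0^1\frac{k}{2}\bar\phi'^2\le\int_0^1\frac{k}{2}\widetilde\phi'^2$. For the coupling term one uses that $t\mapsto\max\{t,-\tfrac{3}{2}\pi\}$ is $1$-Lipschitz, whence $|\bar\phi-\bar\theta|\le|\widetilde\phi-\widetilde\theta|$ pointwise. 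For the load term, $\sin\bar\theta\ge\sin\widetilde\theta$ pointwise (with equality where $\widetilde\theta\ge-\tfrac{3}{2}\pi$, and value $1$ where $\widetilde\theta<-\tfrac{3}{2}\pi$), and since $b(1-x)\ge0$ we obtain $-b(1-x)\sin\bar\theta\le-b(1-x)\sin\widetilde\theta$. Summing, $F_{b,k}(\bar\phi,\bar\theta)\le F_{b,k}(\widetilde\phi,\widetilde\theta)$.

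Because $(\widetilde\phi,\widetilde\theta)$ minimizes $F_{b,k}$ on $\mathfrak S^*_\lambda$ and $(\bar\phi,\bar\theta)$ is admissible, the reverse inequality also holds, so all three inequalities above must be equalities. I would then exploit the equality in the Dirichlet term alone: it forces $\int_{\{\widetilde\phi<-\frac{3}{2}\pi\}}\frac{k}{2}\widetilde\phi'^2=0$, hence $\widetilde\phi'=0$ a.e.\ on the open set $\{\widetilde\phi<-\tfrac{3}{2}\pi\}$ (open because $\widetilde\phi\in H^1_*\subset C^0$). On any connected component $(\alpha,\beta)$ of this set $\widetilde\phi$ would then be constant; continuity forces its value to equal the boundary value $-\tfrac{3}{2}\pi$ (and $\alpha>0$ since $\widetilde\phi(0)=0$), contradicting $\widetilde\phi<-\tfrac{3}{2}\pi$ on $(\alpha,\beta)$. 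Therefore $\{\widetilde\phi<-\tfrac{3}{2}\pi\}=\emptyset$, which is the assertion.

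The argument is essentially routine once the truncation level is fixed; the only genuinely delicate point is the choice of $-\tfrac{3}{2}\pi$, which must lie below the barrier $\phi^*_\lambda$ and simultaneously sit at a maximum of $\sin$, so that all three energy contributions move in the same favorable direction. The closing step, converting ``$\widetilde\phi'=0$ on the sublevel set'' into ``the sublevel set is empty'', is the only place where the continuity of $\widetilde\phi$ and the clamping condition $\widetilde\phi(0)=0$ are actually used.
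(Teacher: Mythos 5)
Your proof is correct. It is the same basic idea as the paper's --- compare $(\widetilde\phi,\widetilde\theta)$ with a competitor pinned at the level $-\tfrac{3}{2}\pi$, which works precisely because that level lies strictly below the barrier $\phi^*_\lambda\ge-\pi$ and $\sin(-\tfrac{3}{2}\pi)=1$ maximizes the load term --- but the competitor is built differently. The paper takes the first crossing point $x_0$ with $\widetilde\phi(x_0)=-\tfrac{3}{2}\pi$ and freezes \emph{both} functions at the constant $-\tfrac{3}{2}\pi$ on all of $(x_0,1]$; this kills the Dirichlet, coupling, and load contributions there simultaneously and, since $\widetilde\phi$ must still travel from $-\tfrac{3}{2}\pi$ at $x_0$ to a strictly smaller value at $x_1$, it yields a \emph{strict} energy decrease in one line, with no equality-case analysis. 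Your max-truncation $\bar\phi=\max\{\widetilde\phi,-\tfrac32\pi\}$, $\bar\theta=\max\{\widetilde\theta,-\tfrac32\pi\}$ only gives a non-strict inequality, so you have to pay for it with the extra step converting equality of Dirichlet energies into $\widetilde\phi'=0$ a.e.\ on the sublevel set and then into emptiness of that set via continuity and $\widetilde\phi(0)=0$; you carry this out correctly (the identity $\bar\phi'=\widetilde\phi'\mathbf{1}_{\{\widetilde\phi>-3\pi/2\}}$, the $1$-Lipschitz bound on the coupling term, and the sign of $b(1-x)$ are all used properly, and admissibility of the truncated pair in $\mathfrak S^*_\lambda$ is checked). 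The trade-off is that your argument is closer in style to the proof of Lemma~\ref{2} and is slightly longer, while the paper's ``freeze after first touching'' competitor is more economical for this particular statement.
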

\begin{proof}
	Reasoning by contradiction, if there exists $x_1 \in ]0,1]$ such that 
	$\widetilde\phi(x_1) < -\frac{3}{2}\pi$,
	by continuity there exists $x_0 \in ]0,x_1[$ such that
	$\widetilde\phi(x_0) = -\frac{3}{2}\pi$.
	Therefore, we can define
	the functions $\phi_1,\theta_1\colon [0,1] \to \mathbb{R}$ as
	\[
		\phi_1(x) = 
		\begin{cases}
			\widetilde\phi(x), &		\mbox{if } x \le x_0, \\
			-\frac{3}{2}\pi, &	\mbox{if }  x> x_0,
		\end{cases}
		\quad\text{and}\quad
		\theta_1(x) = 
		\begin{cases}
			\widetilde\theta(x), &		\mbox{if } x \le x_0, \\
			-\frac{3}{2}\pi, &	\mbox{if }  x> x_0.
		\end{cases}
	\]
	Since $\widetilde\phi(x_1) < -\frac{3}{2}\pi$, we have
	$
	\int_{x_0}^{1}\widetilde\phi'^2\text{d}x
	\ge
	\int_{x_0}^{x_1}\widetilde\phi'^2\text{d}x
	> 0,
	$
	hence
	\begin{multline*}
		F_{b,k}(\widetilde\phi,\widetilde\theta) - F_{b,k}(\phi_1,\theta_1) 
		=
		\int_{x_0}^{1} \bigg(
			k\frac{\widetilde\phi'^2}{2} + \frac{(\widetilde\phi-\widetilde\theta)^2}{2} 
		\bigg)\text{d}x\\
		+
		\int_{x_0}^{1} b(1-x)(1 - \sin\widetilde\theta)\text{d}x
		\ge
		\int_{x_0}^{1}
		k\frac{\widetilde\phi'^2}{2}
		\text{d}x >0,
	\end{multline*}
	contradicting the minimality of $(\widetilde\phi,\widetilde\theta)$.
\end{proof}
\begin{lemma}
	\label{lem:noTouch-after-xlambda}
	If $(\widetilde{\phi},\widetilde{\theta})$ is a global minimizer of
	$F_{b,k}$ in $\mathfrak{S}_{\lambda}^*$, then
	\[
		\widetilde\phi(x) <  -\pi,
		\quad \forall x \in (x_\lambda,1].
	\]
\end{lemma}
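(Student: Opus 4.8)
The plan is to argue by contradiction: assume $\widetilde\phi(x_2) = -\pi$ for some $x_2 \in (x_\lambda, 1]$. On $(x_\lambda, 1]$ the function $\phi^*_\lambda$ is the constant $-\pi$, so the constraint there reduces to $\widetilde\phi \le -\pi$, and by Lemma \ref{lem:tildephi_ge_32pi} we have $\widetilde\phi(x) \in [-\tfrac32\pi, -\pi]$. The strategy is to show that $\widetilde\phi$ is convex on $(x_\lambda,1)$ while $\widetilde\phi - \widetilde\theta$ is strictly positive on the contact set: a convex function lying below $-\pi$ and touching it can do so only by being identically $-\pi$, which will be incompatible with the positivity of $\widetilde\phi - \widetilde\theta$.

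First I would fix the sign of $\widetilde\phi - \widetilde\theta$. Since $\theta$ is unconstrained and enters $F_{b,k}$ without derivatives, $\widetilde\theta(x)$ minimizes pointwise a.e. the function $h(\theta) = \tfrac12(\widetilde\phi(x) - \theta)^2 - b(1-x)\sin\theta$. The quadratic part is symmetric about $\widetilde\phi(x)$, while the maximum of $\sin$ nearest the relevant range $[-\tfrac32\pi,-\pi]$ lies at $-\tfrac32\pi$, i.e.\ \emph{below} $\widetilde\phi$; comparing $\widetilde\theta$ with its reflection $2\widetilde\phi - \widetilde\theta$ and using that $\sin$ increases as $\theta$ decreases through $[-\tfrac32\pi,-\pi]$ forces $\widetilde\theta(x) \le \widetilde\phi(x)$, hence $\widetilde\phi - \widetilde\theta \ge 0$. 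Moreover the stationarity relation of Lemma \ref{lem:algebraic_theta} gives $\widetilde\phi - \widetilde\theta = -b(1-x)\cos\widetilde\theta$ with $\cos\widetilde\theta \le 0$ on $[-\tfrac32\pi,-\pi]$; on the contact set, where $\widetilde\phi = -\pi$ forces $\widetilde\theta \ne -\tfrac32\pi$, this yields $\widetilde\phi - \widetilde\theta > 0$ a.e.\ for $x < 1$.

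Next I would establish convexity. Testing minimality against the one-sided admissible variations $(\widetilde\phi - t\psi, \widetilde\theta)$ with $t \ge 0$ and $\psi \in H^1_*$, $\psi \ge 0$, supported in $(x_\lambda,1)$ (which stay below $-\pi$, hence lie in $\mathfrak{S}^*_\lambda$), and differentiating at $t=0^+$, gives $\int_{x_\lambda}^1 \bigl(k\widetilde\phi'\psi' + (\widetilde\phi - \widetilde\theta)\psi\bigr)\,\dx \le 0$. This is exactly the distributional inequality $k\widetilde\phi'' \ge \widetilde\phi - \widetilde\theta$ on $(x_\lambda,1)$; combined with $\widetilde\phi - \widetilde\theta \ge 0$ it gives $\widetilde\phi'' \ge 0$, so $\widetilde\phi$ is convex. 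A convex function that is $\le -\pi$ and attains $-\pi$ at an interior point of $(x_\lambda,1)$ is identically $-\pi$; then $\widetilde\phi' = 0$ reduces the displayed inequality to $\int(\widetilde\phi - \widetilde\theta)\psi \le 0$ for all $\psi \ge 0$, forcing $\widetilde\phi - \widetilde\theta \le 0$ a.e., contradicting the strict positivity just proved. The remaining case of contact only at $x_2 = 1$ I would exclude via the natural boundary condition $\widetilde\phi'(1) = 0$ (from variations not vanishing at $x=1$): with convexity it makes $\widetilde\phi$ nonincreasing, so $\widetilde\phi \ge \widetilde\phi(1) = -\pi$, again forcing $\widetilde\phi \equiv -\pi$ and the same contradiction. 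Hence no contact point exists and $\widetilde\phi < -\pi$ on all of $(x_\lambda,1]$.

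The hard part will be pinning down the sign of $\widetilde\phi - \widetilde\theta$ uniformly in $b$: one must prove that the pointwise minimizer $\widetilde\theta(x)$ sits in the well $[-\tfrac32\pi,-\pi]$ below $\widetilde\phi$ rather than being dragged toward a farther maximum of $\sin$, which for large $b$ requires ruling out competing global minima of $h$ by explicitly comparing its values at the candidate wells near $-\tfrac32\pi$ and near $+\tfrac\pi2$. A secondary technical care is that, before the regularity of $\widetilde\phi$ is available, both the convexity and the boundary condition must be read distributionally; the bound $\widetilde\phi - \widetilde\theta \in L^\infty$ gives $\widetilde\phi \in H^2_{\mathrm{loc}}$ on the free set, which suffices to make the endpoint argument rigorous.
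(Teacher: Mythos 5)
Your argument is correct in substance but takes a genuinely different route from the paper's. The paper disposes of the interval $[x_\lambda,1]$ in a few lines by a symmetry trick: a reflection of the restricted pair about $-\pi$ (using $\sin(-\pi-\theta)=\sin\theta$ and the invariance of $(\phi-\theta)^2$ and $\phi'^2$) turns the constraint $\phi\le-\pi$ into $\phi\ge 0$, after which Lemma \ref{lem:monotonicity} applied to the reflected problem shows that $\widetilde\phi$ is strictly decreasing on $[x_\lambda,1]$, hence strictly below $-\pi$ there. You instead run a direct obstacle-problem argument entirely on $(x_\lambda,1]$: the sign $\widetilde\phi-\widetilde\theta\ge 0$ via pointwise minimization in $\theta$ together with Lemma \ref{lem:algebraic_theta}, the one-sided variational inequality $k\widetilde\phi''\ge\widetilde\phi-\widetilde\theta\ge 0$ from downward perturbations, and the rigidity of a convex function touching its upper bound. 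Your route is longer but more self-contained: it does not lean on Lemma \ref{lem:monotonicity}, whose statement concerns the unconstrained global minimizer with datum $\phi(0)=0$ and whose transfer to the restricted, constrained problem on $[x_\lambda,1]$ the paper leaves implicit. Two details to tighten. First, the ``hard part'' you flag --- confining $\widetilde\theta$ to the well below $\widetilde\phi$ uniformly in $b$ --- is in fact easy: comparing $h(\theta)\ge\tfrac12(\theta-\widetilde\phi)^2-b(1-x)$ with $h(-\tfrac32\pi)\le\tfrac{\pi^2}{8}-b(1-x)$ localizes any minimizer to $|\widetilde\theta-\widetilde\phi|\le\tfrac{\pi}{2}$, after which the reflection identity $\sin(\widetilde\phi-s)-\sin(\widetilde\phi+s)=-2\cos\widetilde\phi\,\sin s\ge 0$ (valid for $s\in(0,\pi)$ and $\cos\widetilde\phi\le 0$, which holds since $\widetilde\phi\in[-\tfrac32\pi,-\pi]$) settles $\widetilde\theta\le\widetilde\phi$, and the algebraic relation then excludes $\widetilde\theta\le-\tfrac32\pi$ and gives strict positivity on the contact set. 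Second, at $x=1$ the admissible one-sided variations yield only $\widetilde\phi'(1^-)\le 0$, not the full natural condition $\widetilde\phi'(1)=0$ (upward variations are inadmissible when $\widetilde\phi(1)=-\pi$); the inequality combined with convexity is, however, all your endpoint argument needs.
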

\begin{proof}
	Being a global minimizer of $F_{b,k}$,
	the restriction of 
	$(\widetilde{\phi},\widetilde{\theta})$
	on the interval $[x_\lambda,1]$,
	is a global minimizer for the functional
	\[
		(\phi,\theta)\mapsto
		\int_{x_\lambda}^1 
		\bigg(
			k\frac{\phi'^2}{2} +\frac{(\phi-\theta)^2}{2} - b(1-x)\sin\theta
		\bigg)\text{d}x
	\]
	with the conditions $\phi(x_\lambda) = \widetilde\phi(x_\lambda) \le -\pi$
	and $\phi(x)\le - \pi$.
	As a consequence, the pair
	$(\widetilde\phi_1,\widetilde\theta_1) \coloneqq (\pi - \widetilde\phi,\pi - \widetilde\theta)$
	is the global minimizer of the same functional
	under the conditions $\phi(x_\lambda) = \pi -\widetilde\phi(x_\lambda) \ge 0$
	and $\phi(x) \ge 0$.
	Thus, by Lemma \ref{lem:monotonicity},
	$\widetilde\phi_1$ is strictly increasing,
	so $\widetilde\phi$ is strictly decreasing.
	Since $\widetilde\phi(x)\le \phi^*_\lambda(x) = -\pi$
	for all $x\in [x_\lambda,1]$,
	we obtain the thesis.
\end{proof}

\begin{lemma}
	\label{lem:regularity_Ploc_minimum}
	If $(\widetilde{\phi},\widetilde{\theta})$ is a global minimizer of
	$F_{b,k}$ in $\mathfrak{S}_{\lambda}^*$, then
	\begin{equation*}
		\widetilde\phi|_{[0,x_\lambda]} \in W^{2,\infty}([0,x_\lambda],\mathbb{R})
	\end{equation*}
	and
	\begin{equation}
		\label{eq:regularity_xlambda1}
		\widetilde\phi|_{[x_\lambda,1]} \in W^{2,\infty}([x_\lambda,1],\mathbb{R}).
	\end{equation}
\end{lemma}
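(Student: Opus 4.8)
The plan is to eliminate $\widetilde\theta$, reducing the statement to a one-dimensional obstacle problem for $\widetilde\phi$ alone with a bounded lower-order term, and then to prove $W^{2,\infty}$ regularity on the two pieces $[0,x_\lambda]$ and $[x_\lambda,1]$ separately. First I would observe that, since in $\mathfrak{S}_\lambda^*$ only the $\phi$-component is constrained, varying $\widetilde\phi$ alone along any $\xi\in\mathcal{V}_\lambda^*(\widetilde\phi)$ while keeping $\widetilde\theta$ fixed is admissible for small positive times, so minimality yields the variational inequality
\[
	\int_0^1\big(k\,\widetilde\phi'\,\xi' + (\widetilde\phi-\widetilde\theta)\,\xi\big)\dx \ge 0,
	\qquad \forall\,\xi\in\mathcal{V}_\lambda^*(\widetilde\phi).
\]
By Lemma~\ref{lem:algebraic_theta} we have $\widetilde\phi-\widetilde\theta = -b(1-x)\cos\widetilde\theta =: \zeta$, and since $\widetilde\phi$ is bounded (it lies in $\mathfrak{C}_\lambda^*$, so $-\tfrac32\pi\le\widetilde\phi\le\phi_\lambda^*\le 0$ by Lemma~\ref{lem:tildephi_ge_32pi}) the pointwise-minimizing $\widetilde\theta$ is bounded too; hence $\zeta\in L^\infty([0,1])$. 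Thus $\widetilde\phi$ solves a scalar obstacle problem with smooth obstacle $\phi_\lambda^*$ and bounded lower-order term $\zeta$, and it suffices to bound $\widetilde\phi''$ in $L^\infty$ on each subinterval.

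On $[x_\lambda,1]$ the argument is immediate. By Lemma~\ref{lem:noTouch-after-xlambda}, $\widetilde\phi(x)<-\pi=\phi_\lambda^*(x)$ for every $x\in(x_\lambda,1]$, so the obstacle is inactive in the interior and the variational inequality reduces to the free Euler--Lagrange equation $k\widetilde\phi''=\zeta$ on $(x_\lambda,1)$. Since $\zeta\in L^\infty$, this gives $\widetilde\phi''\in L^\infty(x_\lambda,1)$, so $\widetilde\phi'$ is Lipschitz on $(x_\lambda,1)$ and extends to the closed interval; the value of $\widetilde\phi$ at the single point $x_\lambda$ is irrelevant, and \eqref{eq:regularity_xlambda1} follows.

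The interval $[0,x_\lambda]$ is where the real work lies. There $\phi_\lambda^*$ is the smooth polynomial with $(\phi_\lambda^*)''=\lambda(x-1)-1$, in particular of class $C^{1,1}$, and the coincidence set $\{\widetilde\phi=\phi_\lambda^*\}$ may a priori be nontrivial. Away from it the free equation $k\widetilde\phi''=\zeta$ holds (bounded), while on its interior $\widetilde\phi=\phi_\lambda^*$ gives $\widetilde\phi''=(\phi_\lambda^*)''$ (also bounded); however, the variational inequality supplies directly only the one-sided distributional estimate $k\widetilde\phi''\ge\zeta$, i.e.\ it controls merely the sign of the obstacle reaction. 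To upgrade these pointwise identities to a genuine two-sided $L^\infty$ bound on $\widetilde\phi''$---equivalently, to exclude a singular obstacle reaction concentrated on the free boundary---I would apply the comparison technique of Marino--Scolozzi~\cite{MarinoScolozzi83}: for small $h>0$ one builds admissible competitors out of translates of $\widetilde\phi$ together with the obstacle, and uses minimality to bound the symmetric second difference quotients $h^{-2}\big(\widetilde\phi(x+h)-2\widetilde\phi(x)+\widetilde\phi(x-h)\big)$ uniformly in $h$. The $C^{1,1}$ regularity of $\phi_\lambda^*$ is exactly what keeps these competitors inside $\mathfrak{C}_\lambda^*$ and closes the estimate, and a uniform bound on the second difference quotients is equivalent to $\widetilde\phi|_{[0,x_\lambda]}\in W^{2,\infty}$.

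The main obstacle is precisely this last step: the variational inequality alone yields only one-sided control of $\widetilde\phi''$, and turning it into a two-sided $L^\infty$ bound across the free boundary is the technical heart of the proof, for which the smoothness of the obstacle and the Marino--Scolozzi construction are essential. Everything else---the reduction to a scalar obstacle problem and the regularity on $[x_\lambda,1]$---is routine once $\zeta\in L^\infty$ is established.
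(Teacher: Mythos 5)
Your reduction is sound and, up to the decisive step, coincides with the paper's: the variational inequality obtained by varying $\phi$ alone, the identity $\widetilde\phi-\widetilde\theta=-b(1-x)\cos\widetilde\theta\in L^\infty$ from Lemma \ref{lem:algebraic_theta}, and the treatment of $[x_\lambda,1]$ via Lemma \ref{lem:noTouch-after-xlambda} and the free Euler--Lagrange equation are exactly what the paper does. You also correctly locate the difficulty on $[0,x_\lambda]$: the variational inequality gives only the one-sided distributional bound $k\widetilde\phi''\ge \widetilde\phi-\widetilde\theta$, and the whole point is to exclude a singular obstacle reaction concentrated on the free boundary.

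The gap is that this decisive step is asserted rather than proved. You invoke the Marino--Scolozzi technique in the form of translated competitors and a uniform bound on symmetric second difference quotients, but no competitor is constructed, and the construction is not routine here: translates are undefined near the endpoints $0$ and $x_\lambda$ (both of which are, or may be, contact points), and arranging that a truncated translate stays in $\mathfrak{C}^*_\lambda$ while yielding a second-order gain is precisely where the work lies. Moreover, this is not the mechanism the paper uses. The paper's argument is a duality estimate: setting $y=\widetilde\phi-\phi^*_\lambda\le 0$ and $z=\widetilde\phi-\widetilde\theta-(\phi^*_\lambda)''\in L^\infty$, it first derives $-ky''+z=0$ a.e.\ on the open non-contact set $I=\bigcup_i\,]a_i,b_i[$; then, for an \emph{arbitrary} $\xi\in W^{1,2}_0([0,x_\lambda])$, it splits $\xi$ into its positive part $\eta$ and the admissible negative part $\xi-\eta$, integrates by parts on each component of $I$, and exploits the sign conditions $y'(b_i)\ge 0$, $y'(a_i)\le 0$ (since $y<0$ on $I$ and $y=0$ at the $a_i,b_i$) together with $y'=0$ a.e.\ on the contact set to arrive at
\begin{equation*}
	\left|\int_0^{x_\lambda} k\,y'\xi'\,\text{d}x\right|
	\le C\,\lVert z\rVert_{L^\infty}\,\lVert \xi\rVert_{L^1},
\end{equation*}
which yields $y'\in W^{1,\infty}$ by a Riesz-representation argument (\cite[Proposition 8.3]{Brezis}). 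To complete your proof you must either carry out this duality estimate or actually build and control the translated competitors; as written, the step you yourself flag as the technical heart is the one that is missing.
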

\begin{proof}
	Thanks to Lemma \ref{lem:noTouch-after-xlambda},
	every $\xi \in C^\infty([0,1],\mathbb{R})$
	with compact support in $(x_\lambda,1)$ is an
	admissible variation.
	As a consequence,
	the regularity indicated by \eqref{eq:regularity_xlambda1}
	can be obtained by standard arguments.

	\noindent Therefore, from now on in this proof,
	we restrict our study on the interval $[0,x_\lambda]$.
	For the sake of presentation,
	we simply write $\widetilde\phi$ instead of 
	$\widetilde\phi|_{[0,x_\lambda]}$
	and, similarly, the sets $\mathfrak{C}^*_\lambda$ and
	$\mathcal{V}^*_\lambda(\widetilde\phi)$
	have to be meant as defined on the interval $[0,x_\lambda]$. 
	Since $(\widetilde\phi,\widetilde\theta)$ is a global minimizer
	for $F_{b,k}$,
	$\widetilde\phi$ is a global minimizer for the functional
	$G\colon \mathfrak{C}_{\lambda} \to \mathbb{R}$ defined as 
	\begin{equation*}
		G(\phi) \coloneqq
		\int_{0}^{x_\lambda}
		\bigg(
			\frac{k}{2}(\phi')^2 + \frac{(\phi - \widetilde{\theta})^2}{2} 
		\bigg)\text{d}x,
	\end{equation*}
	so it satisfies
	\begin{equation*}
		\text{d}G(\widetilde{\phi})[\xi] = 
		\int_0^{x_\lambda}
		\bigg(
			k\phi'\xi' + (\phi - \theta)\xi
		\bigg)dx \ge 0,
		\qquad
		\forall \xi \in \mathcal{V}_{\lambda}^*(\widetilde{\phi}).
	\end{equation*}

	\noindent Set $y = \widetilde{\phi} - \phi^*$.
	Since $\phi^{*}$ is of class $C^2$ on $[0,x_\lambda]$, our thesis can be obtained
	by proving that $y \in W^{2,\infty}([0,x_\lambda])$,
	thus by showing that $y' \in W^{1,\infty}([0,x_\lambda])$.
	Defining the function $z\colon [0,x_\lambda]\to \mathbb{R}$ as 
	\[
		z \coloneqq y + \phi^* - \widetilde{\theta} - (\phi^*)'',
	\]
	we can write the differential of $G$ as follows: 
	\begin{equation*}
		\begin{multlined}
			\text{d}G(\widetilde{\phi})[\xi] = 
			\int_{0}^{x_\lambda}
			\bigg[
				k(y' + (\phi^*)')\xi' + (y + \phi^* - \widetilde{\theta})\xi
			\bigg] \text{d}x \\
			= 
			\int_{0}^{x_\lambda}
			\bigg[
				ky'\xi' + (y + \phi^* - \widetilde{\theta} - (\phi^*)'')\xi
			\bigg] \text{d}x
			= 
			\int_{0}^{x_\lambda}
			\bigg(
				k y' \xi' + z \xi
			\bigg) \text{d}x.
		\end{multlined}
	\end{equation*}
	For all $x \in [0,x_\lambda]$,
	$\widetilde{\phi}(x) \le \phi_{\lambda}^{*}(x)$
	and, by Lemma \ref{lem:tildephi_ge_32pi}, $\widetilde\phi(x)\ge -3/2 \pi$.
	Hence, $\widetilde\phi$ is bounded and, by \eqref{eq:algebraic_theta}, 
	$\widetilde{\theta} \in L^\infty([0,x_\lambda])$.
	As a consequence, $z \in L^{\infty}([0,x_\lambda])$.
	Let us define
	\[
		J = \bigg\{
			x \in [0,x_\lambda]: \widetilde{\phi}(x) = \phi^*(x)
		\bigg\} \cup \{0,x_\lambda\}
		\quad \text{and} \quad 
		I = [0,x_\lambda] \setminus J.
	\]
	The set $I$ is an open set,
	hence it is a countable union of pairwise disjoint open intervals and we can write
	\[
		I = \bigcup_{i \in A}]a_i,b_i[,
	\]
	where $A$ is a countable set.
	Let us consider an arbitrary scalar field $\nu \in W^{1,2}_0([0,x_\lambda])$
	such that $\nu(x) = 0$ for all $x \in J$.
	As a consequence,
	both $\nu$ and $-\nu$ are
	infinitesimal admissible variations of $\widetilde{\phi}$ in $\mathfrak{C}_{\lambda}$
	and we have
	\[
		\text{d}G(\widetilde{\phi})[\nu] 
		= 
		\sum_{i \in A}
		\int_{a_i}^{b_i} \bigg(ky' \nu' + z \nu \bigg) \text{d}x
		= 0,
	\]
	hence, for the arbitrariness of $V$,
	\[
		\int_{a_i}^{b_i} \bigg(ky' \nu' + z \nu \bigg) \text{d}x  = 0, \qquad \forall i \in A.
	\]
	By a standard argument,
	we obtain that $y'$ is absolutely continuous in $I$ and it satisfies
	\begin{equation}
		\label{eqn:regularityProof1}
		-k y'' + z = 0,
		\qquad \text{a.e. in } I.		
	\end{equation}
	For an arbitrary $\xi \in W^{1,2}_0([0,x_\lambda])$,
	if we set $\eta(x) = \max\{\xi(x), 0 \}$, then
	\[
		\zeta(x) = \xi(x) - \eta(x) \in \mathcal{V}_{\lambda}^*(\widetilde\phi),
	\]
	hence
	\[
		\text{d} G(\widetilde\phi)[\zeta]\ge 0.
	\]
	By \eqref{eqn:regularityProof1}, partial integration reduces to
	\[
		\int_I (ky'\eta' + z \eta)\text{d}x = 
		\sum_{i \in A} (y'(b_i)\eta(b_i)- y'(a_i)\eta(a_i)).
	\]
	Since $y = 0$ in $J$ and $y < 0$ in $I$ we have
	\[
		y'(b_i) \ge 0, \qquad y'(a_i) \le 0, \qquad \forall i \in A,
	\]
	except for $y'(0)$ and $y'(1)$, but in that cases $\eta(0) = \eta(x_\lambda)= 0$.
	As a consequence, 
	\[
		\int_I (ky'\eta' + z \eta)\text{d}x \ge 0,
	\]
	and we have
	\[
		\begin{multlined}
			0 \le \text{d}G(\widetilde{\phi})[\zeta] = 
			\int_0^{x_\lambda} (ky'\xi' + z \xi)\text{d}x
			- \int_J (ky'\eta' + z \eta)\text{d}x
			- \int_I (ky'\eta' + z \eta)\text{d}x
			\\
			\le 
			\int_0^{x_\lambda} (ky'\xi' + z \xi)\text{d}x
			- \int_J (ky'\eta' + z \eta)\text{d}x,
		\end{multlined}	
	\]
	hence
	\[
		\int_0^{x_\lambda} (ky'\xi' + z \xi)\text{d}x \ge 
		\int_J (ky'\eta' + z \eta)\text{d}x.
	\]
	Since $y = 0$ on $J$, then $y'= 0$ a.e. on $J$ (cf. \cite{GT} Lemma 7.7)
	and we obtain
	\begin{equation}
		\label{eqn:regularityProof2}
		\int_0^{x_\lambda} (ky'\xi' + z \xi)\text{d}x \ge \int_J z \eta\ \text{d}x.
	\end{equation}
	By \eqref{eqn:regularityProof2}, recalling that $|\eta| \le |\xi|$,  we obtain 
	\[
		\bigg| \int_0^{x_\lambda} (ky'\xi' + z \xi)\text{d}x \bigg|
		\le \lVert z \rVert_{L^\infty}\lVert\xi\rVert_{L^\infty},
	\]
	whence
	\[
		\left|\int_0^{x_\lambda} ky'\xi'~ \text{d}x \right|
		\le \left| \int_0^{x_\lambda} (ky'\xi' + z \xi)\text{d}x \right|
		+ \left|\int_0^{x_\lambda}z\xi~ \text{d}x \right|
		\le 2\lVert z\rVert_{L^\infty}\lVert\xi\rVert_{L^\infty}.
	\]
	Since $\xi(0) = 0$, then there exists a constant $c_1$ 
	such that $\lVert\xi\rVert_{L^{\infty}} \le c_1 \lVert\xi'\rVert_{L^1}$ and we obtain
	\[
		\left|\int_0^{x_\lambda} ky'\xi'~ \text{d}x \right| 
		\le 2c_1 \lVert z\rVert_{L^\infty}\lVert\xi'\rVert_{L^1},
		\qquad \forall \xi' \in L^{1}([0,x_\lambda]).
	\]
	Hence, $y' \in L^{\infty}([0,x_\lambda])$ by the Riesz representation theorem.
	Using again \eqref{eqn:regularityProof2},
	there exists a constant $c_2$ such that 
	\[
		\left|\int_0^{x_\lambda} ky'\xi'~ \text{d}x \right|
		\le c_2 \lVert z\rVert_{L^\infty}\lVert\xi\rVert_{L^1}.
	\]
	By a standard argument (see, for instance, \cite[Proposition 8.3]{Brezis}),
	this suffices to conclude that
	$y' \in W^{1,\infty}([0,x_\lambda])$.
\end{proof}

\noindent Now we are ready to prove Proposition \ref{prop:local_EulerLagrange}.
\begin{proof}[\textbf{Proof of Proposition \ref{prop:local_EulerLagrange}}]
	Let $\mathcal{V}_{\lambda}^*(\widetilde\phi)$ be the set of all
	admissible infinitesimal variations of $\widetilde\phi$ in $\mathfrak{C}_{\lambda}$,
	defined as in \eqref{eq:calV-phi}.
	Since $(\widetilde{\phi},\widetilde{\theta})$ is a global minimizer, 
	\begin{equation}
		\label{eq:local-EL-proof1}
		\text{d}F_{b,k}(\widetilde{\phi},\widetilde{\theta})[\xi,0]
		= \int_{0}^{1} \bigg(
			k\widetilde{\phi}'\xi' + (\widetilde{\phi} - \widetilde{\theta})\xi
		\bigg) dx
		\ge 0,
		\quad \forall \xi \in \mathcal{V}_{\lambda}^*(\widetilde{\phi}).
	\end{equation}
	By Lemma \ref{lem:noTouch-after-xlambda},
	every function of class $C^\infty$
	with compact support in $(x_\lambda,1)$
	belongs to $\mathcal{V}_\lambda^*(\widetilde\phi)$.
	As a consequence,
	by a standard argument we obtain that 
	\[
		k\widetilde\phi'' = \widetilde\phi - \widetilde\theta,
		\quad \text{a.e. on }[x_\lambda,1],
	\]
	and we can reduce our analysis on the interval $[0,x_\lambda]$.
	Let us now consider a variation in $\mathcal{V}_{\lambda}^*(\widetilde{\phi})$
	with compact support in $(0,x_\lambda)$.
	By Lemma \ref{lem:regularity_Ploc_minimum},
	$\widetilde{\phi} \in W^{2,\infty}([0,x_\lambda])$ 
	so we can integrate by parts \eqref{eq:local-EL-proof1}
	and obtain
	\begin{equation}
		\label{eqn:GlobMinFreeProof2}
		- k \widetilde{\phi}'' +(\widetilde{\phi} - \widetilde{\theta}) 
		= \tau(x) \le 0,
		\quad \text{a.e. on }[0,x_\lambda],
	\end{equation}
	where $\tau(x) = 0$ if $\widetilde{\phi}(x) < \phi^*_\lambda(x)$.
	Set
	\[
		J = \left\{
			x \in [0,x_\lambda]: \widetilde{\phi}(x) = \phi^*_\lambda(x)
		\right\}.
	\]
	Since $\widetilde{\phi} \in W^{2,\infty}([0,x_\tau])$, 
	using \cite[Lemma 7.7]{GT} we obtain that 
	$
	\widetilde{\phi}'' = (\phi^*_\lambda)''
	$
	a.e. on $J$.
	Recalling also \eqref{eq:algebraic_theta},
	we obtain
	\begin{multline*}
		\tau = 	-k (\phi^*_\lambda)'' - b(1-x)\cos\widetilde\theta 
		\ge -k(\phi^*_\lambda)'' - b(1-x)\\
		= b(1-x) +  k - b(1-x)
		=  k > 0,
		\quad \text{a.e. on }J.
	\end{multline*}
	As a consequence,
	from \eqref{eqn:GlobMinFreeProof2}
	we deduce that $J$ is a set of measure zero and \eqref{eq:euler-lagrange-phi} follows.
\end{proof}

\noindent Using Proposition \ref{prop:local_EulerLagrange}
and exploiting again the properties of $\phi_{\lambda}^*$,
we obtain that $\widetilde\phi$ can coincide with $\phi_{\lambda}^*$ 
only at $0$ and $x_\lambda$.
More formally, we have the following result.

\begin{cor}
	\label{cor:notTouch}
	Let $(\widetilde{\phi},\widetilde{\theta})$ be a global minimizer
	of $F_{b,k}$ in $\mathfrak{S}_{\lambda}^*$.
	Then 
	\[
		\widetilde\phi(x) < \phi_{\lambda}^*(x),
		\quad \forall x \ne 0,x_{\lambda}.
	\]
\end{cor}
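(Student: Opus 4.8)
The plan is to treat the two intervals $(0,x_\lambda)$ and $(x_\lambda,1]$ separately. On $(x_\lambda,1]$ there is nothing left to prove: Lemma \ref{lem:noTouch-after-xlambda} already gives $\widetilde\phi(x) < -\pi = \phi^*_\lambda(x)$ there. Hence the whole content is to show that $\widetilde\phi(x) < \phi^*_\lambda(x)$ for every $x \in (0,x_\lambda)$. To this end I would introduce the ``defect'' $y \coloneqq \widetilde\phi - \phi^*_\lambda$ on $[0,x_\lambda]$, which satisfies $y \le 0$ by the very definition of $\mathfrak{C}^*_\lambda$, satisfies $y(0) = 0$ since $\phi^*_\lambda(0) = \widetilde\phi(0) = 0$, and belongs to $W^{2,\infty}([0,x_\lambda])$ thanks to Lemma \ref{lem:regularity_Ploc_minimum} together with the fact that $\phi^*_\lambda$ is a cubic polynomial on $[0,x_\lambda]$.

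The key computation is the sign of $y''$. Combining the Euler--Lagrange identity of Proposition \ref{prop:local_EulerLagrange}, namely $k\widetilde\phi'' = \widetilde\phi - \widetilde\theta$, with the algebraic relation of Lemma \ref{lem:algebraic_theta}, one gets $k\widetilde\phi'' = -b(1-x)\cos\widetilde\theta$ almost everywhere. On $[0,x_\lambda]$ one has explicitly $(\phi^*_\lambda)''(x) = \lambda x - \lambda - 1$, so that $k(\phi^*_\lambda)''(x) = -b(1-x) - k$ (using $k\lambda = b$). Subtracting,
\[
	k y'' = b(1-x)\big(1 - \cos\widetilde\theta\big) + k \ge k > 0
	\quad\text{a.e. on } [0,x_\lambda],
\]
because $1 - \cos\widetilde\theta \ge 0$ and $1-x \ge 0$. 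Hence $y'' \ge 1$ almost everywhere.

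Finally, since $y \in W^{2,\infty}$ its derivative $y'$ is absolutely continuous, and $y'' \ge 1 > 0$ a.e.\ forces $y'$ to be strictly increasing; thus $y$ is strictly convex on $[0,x_\lambda]$. A strictly convex function lies strictly below the chord joining its endpoints on the open interval: since $y(0) = 0$ and $y(x_\lambda) \le 0$, that chord is nonpositive, and therefore $y(x) < 0$ for every $x \in (0,x_\lambda)$. This is precisely the desired strict inequality, and together with the case $(x_\lambda,1]$ it proves the corollary.

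I expect the only delicate point to be the bookkeeping that legitimizes differentiating twice and reading off the Euler--Lagrange equation pointwise almost everywhere; this is already secured by the $W^{2,\infty}$ regularity from Lemma \ref{lem:regularity_Ploc_minimum}, so no genuinely new difficulty arises. The conceptual heart is simply the observation that the specific cubic $\phi^*_\lambda$ was designed so that $k(\phi^*_\lambda)'' = -b(1-x) - k$ sits strictly below $k\widetilde\phi'' = -b(1-x)\cos\widetilde\theta$, which is exactly what makes the defect $y$ strictly convex and hence unable to touch zero in the interior.
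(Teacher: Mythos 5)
Your proof is correct and rests on exactly the same ingredients as the paper's: Lemma \ref{lem:noTouch-after-xlambda} to dispose of $(x_\lambda,1]$, the $W^{2,\infty}$ regularity from Lemma \ref{lem:regularity_Ploc_minimum}, and the combination of Proposition \ref{prop:local_EulerLagrange} with Lemma \ref{lem:algebraic_theta} to get the key estimate $(\widetilde\phi-\phi^*_\lambda)''\ge 1$ a.e.\ on $[0,x_\lambda]$. The only (cosmetic) difference is the endgame: the paper argues by contradiction at an interior tangency point $\bar x$, using $\widetilde\phi'(\bar x)=(\phi^*_\lambda)'(\bar x)$ and a double integration, whereas you invoke strict convexity of the defect together with the boundary values $y(0)=0$, $y(x_\lambda)\le 0$; both are valid.
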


\begin{proof}
	Using again Lemma \ref{lem:noTouch-after-xlambda},
	it suffices to prove that 
	$\widetilde\phi(x)<\phi^*_\lambda(x)$
	for all $x \in (0,x_\lambda)$.
	Seeking a contradiction,
	let $\bar{x} \in (0,x_\lambda)$
	be such that $\widetilde\phi(\bar{x}) = \phi_{\lambda}^*(\bar{x})$.
	Since $\widetilde\phi(x)\le \phi_{\lambda}^*(x)$ for all $x \in [0,x_\lambda]$
	and, by Lemma \ref{lem:regularity_Ploc_minimum},
	$\widetilde\phi\in W^{2,\infty}([0,x_\lambda])\subset C^1([0,x_\lambda])$
	we have $\widetilde\phi'(\bar{x}) = {\phi_{\lambda}^*}'(\bar{x})$.
	Hence, we obtain
	\begin{equation}
		\label{eq:th42-proof1}
		0 \ge \widetilde\phi(x) - \phi_{\lambda}^*(x)
		= \int_{\bar{x}}^{x} \bigg( 
			\int_{\bar{x}}^{s} (\widetilde{\phi} - \phi_{\lambda}^*)''(\tau) \text{d}\tau
		\bigg) \text{d}s,
		\quad \forall x \in [\bar{x}, x_\lambda].
	\end{equation}
	Since $\phi_{\lambda}^*$ is defined by \eqref{eq:def_phiStar},
	we have
	\[
		{\phi_{\lambda}^*}''(x)  =
		- \left( \lambda(1-x) +1 \right),
		\qquad \text{on }[0,x_\lambda],
	\]
	while \eqref{eq:euler-lagrange-phi} and \eqref{eq:algebraic_theta}
	imply
	\[
		\widetilde{\phi}''(x) = -\frac{b}{k}(1-x)\cos\widetilde\theta
		\ge -\lambda(1-x)
		\quad \text{a.e. on }[0,x_\lambda].
	\]
	As a consequence, 
	from \eqref{eq:th42-proof1} we obtain
	that for every $x \in ]\bar{x},x_\lambda]$
	we have
	\begin{equation*}
		0 
		\ge \int_{\bar{x}}^{x} \bigg( \int_{\bar{x}}^{s}
			\bigg(
				- \lambda(1-\tau)+ \lambda(1-\tau)+1 
			\bigg)
		\text{d}\tau \bigg) \text{d}s = 
		\frac{(x - \bar{x})^2}{2} > 0,
	\end{equation*}
	which is absurd.

\end{proof}
\begin{remark}
	\label{rem:onlySecondDeriv}
	We notice that the proofs of Proposition \ref{prop:local_EulerLagrange}
	and of Corollary \ref{cor:notTouch} rely only on the
	second order derivative of $\phi^*_\lambda$.
	As a consequence, if we substitute this constraint
	with another function with the same second order derivative
	we obtain analogous results.
	This observation will be useful in the final part of our work,
	when we will use a $\Gamma$--convergence argument to show the
	local minimality of $(\tilde\phi,\tilde\theta)$.
\end{remark}

\subsection{Convergence to minimizers of the Euler beam}
\label{sub:32}

The following results are needed to show that,
also for $x=x_{\lambda}$,
the minimizer under $\phi^*_{\lambda}$ does not ``touch''
the constraint.
This requires considerably more effort,
and it is achieved through a comparison with the easier cases represented by the functionals describing the nonlinear Euler beam under uniformly distributed and concentrated load.

\noindent Recalling the definition of $\mathfrak{C}^*_{\lambda}$
given in \eqref{eq:def-C*lambda},
we define the functional $F_\lambda\colon \mathfrak{C}^*_{\lambda}\to\mathbb{R}$
as
\begin{equation}
	\label{eq:def-Flambda}
	F_{\lambda}(\phi) \coloneqq \int_{0}^{1}
	\left(
		\frac{|\phi'|^2}{2} - \lambda(1-x)\sin\phi
	\right)\text{d}x,
\end{equation}
which corresponds to the energy functional
of a nonlinear Euler beam under distributed load
(see e.g. \cite{Euler__}).
Denoting by $\widetilde\phi_\lambda$ its minimizer, 
the main results of this subsection are the following:
\begin{itemize}
	\item if $b$ is sufficiently small and $b/k = \lambda$,
		then the global minimizer $(\widetilde\phi,\widetilde\theta)$ 
		of $F_{b,k}$ in $\mathfrak{S}^*_\lambda$
		is such that $\lVert\widetilde\phi(x) - \widetilde\phi_\lambda(x)\rVert$
		is arbitrarily small:
		in other words, the solutions of the problem of a nonlinear Timoshenko beam
		are similar to the ones of a nonlinear Euler beam;
	\item if $\lambda$ is sufficiently large, then $\widetilde\phi_\lambda(x_\lambda)$
		is strictly less then $-\pi$:
		this result will be achieved by a limit process
		that can get rid of the autonomous component in the functional
		\eqref{eq:def-Flambda}.
\end{itemize}

\begin{remark}
	\label{remF_0}
	The arguments used in the proofs
	of Proposition \ref{prop:local_EulerLagrange},
	of Lemma \ref{lem:regularity_Ploc_minimum},
	and of Corollary \ref{cor:notTouch}
	can be easily applied to $F_{\lambda}$.
	Therefore, if $\widetilde{\phi}_{\lambda}$ is a minimizer of $F$ in
	$\mathfrak{C}_{\lambda}^*$, we have 
	\begin{equation*}
		\widetilde\phi_{\lambda}''+\lambda(1-x)\cos\widetilde\phi = 0,
		\quad \text{a.e. on } [0,1],
	\end{equation*}
	and
	\begin{equation*}
		\widetilde\phi_{\lambda}(x) < \phi_{\lambda}^*(x)
		\quad \forall x \ne 0,x_{\lambda}.
	\end{equation*}
\end{remark}
\begin{prop}
	\label{prop:convergence}
	Fix $\lambda_0 \in \mathbb{R}$ and
	let $(b_n,k_n)_{n \in \mathbb{N}} \subset \mathbb{R}^+ \times \mathbb{R}^+$
	be such that 
	\[
		\lim_{n \to \infty} b_n = 0
		\quad\text{and}\quad
		\frac{b_n}{k_n} = \lambda_0, \quad \forall n \in \mathbb{N}.
	\]
	Let $(\widetilde\phi_n,\widetilde\theta_n) \in \mathfrak{S}_{\lambda_0}^*$
	be the sequence of corresponding minimizers of $F_{b_n,k_n}$.
	Then, up to considering a subsequence,
	$\widetilde\phi_n$ and $\widetilde\theta_n$
	converge in the $C^1$ norm and a.e., respectively,
	to a function $\widetilde\phi_{\lambda_0}$
	which is a global minimizer of
	the functional
	$F_{\lambda_0}\colon \mathfrak{C}_{\lambda_0}^*\to \mathbb{R}$.
\end{prop}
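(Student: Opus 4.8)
The plan is to rescale the functional and then exploit the strong a priori regularity of the constrained minimizers already established. Dividing by $k_n$ and using $b_n/k_n = \lambda_0$, one has
\[
	\frac{1}{k_n}F_{b_n,k_n}(\phi,\theta)
	= \int_0^1 \bigg(\frac12 \phi'^2 + \frac{(\phi-\theta)^2}{2k_n} - \lambda_0(1-x)\sin\theta\bigg)\text{d}x,
\]
so that, heuristically, as $k_n \to 0$ the penalty term $\frac{(\phi-\theta)^2}{2k_n}$ forces $\theta \to \phi$ and the rescaled energy degenerates to $F_{\lambda_0}$. I would make this rigorous not through an abstract $\Gamma$--convergence lower bound, but by showing that the minimizers converge strongly enough to pass to the limit in the energy directly.

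First I would establish compactness. By Proposition \ref{prop:local_EulerLagrange} and Lemma \ref{lem:algebraic_theta}, each minimizer satisfies, a.e.\ on $[0,1]$,
\[
	k_n\widetilde\phi_n'' = \widetilde\phi_n - \widetilde\theta_n = -b_n(1-x)\cos\widetilde\theta_n,
\]
hence $\widetilde\phi_n'' = -\lambda_0(1-x)\cos\widetilde\theta_n$ as an $L^\infty$ weak second derivative on all of $[0,1]$, so that $\|\widetilde\phi_n''\|_{L^\infty}\le \lambda_0$ uniformly in $n$. Combined with the uniform bound $-\frac{3}{2}\pi \le \widetilde\phi_n \le \phi^*_{\lambda_0}$ from Lemma \ref{lem:tildephi_ge_32pi} and the constraint $\widetilde\phi_n(0)=0$, this yields a uniform $W^{2,\infty}([0,1])$ bound; in particular $\widetilde\phi_n$ and $\widetilde\phi_n'$ are equibounded and equi-Lipschitz, so by the Ascoli--Arzel\`a theorem a subsequence converges $\widetilde\phi_n \to \widetilde\phi_{\lambda_0}$ in $C^1([0,1])$. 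The algebraic relation then gives $\|\widetilde\theta_n - \widetilde\phi_n\|_{L^\infty}\le b_n \to 0$, whence $\widetilde\theta_n \to \widetilde\phi_{\lambda_0}$ uniformly, which is stronger than the claimed a.e.\ convergence. Passing to the limit in $\widetilde\phi_n(0)=0$ and $\widetilde\phi_n \le \phi^*_{\lambda_0}$ shows $\widetilde\phi_{\lambda_0}\in\mathfrak{C}^*_{\lambda_0}$.

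It remains to identify $\widetilde\phi_{\lambda_0}$ as a minimizer of $F_{\lambda_0}$. Using the algebraic relation again, the penalty term evaluated at the minimizer equals $\frac{(\widetilde\phi_n-\widetilde\theta_n)^2}{2k_n} = \frac{b_n\lambda_0}{2}(1-x)^2\cos^2\widetilde\theta_n$, which is $O(b_n)$ and thus tends to $0$ uniformly; together with the $C^1$ convergence of $\widetilde\phi_n$, the uniform convergence of $\widetilde\theta_n$, and the Lipschitz continuity of $\sin$, this gives $\lim_n \frac{1}{k_n}F_{b_n,k_n}(\widetilde\phi_n,\widetilde\theta_n) = F_{\lambda_0}(\widetilde\phi_{\lambda_0})$. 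On the other hand, for any competitor $\psi \in \mathfrak{C}^*_{\lambda_0}$ the admissible pair $(\psi,\psi)\in\mathfrak{S}^*_{\lambda_0}$ has vanishing coupling term, so $\frac{1}{k_n}F_{b_n,k_n}(\psi,\psi) = F_{\lambda_0}(\psi)$; minimality of $(\widetilde\phi_n,\widetilde\theta_n)$ in $\mathfrak{S}^*_{\lambda_0}$ gives $\frac{1}{k_n}F_{b_n,k_n}(\widetilde\phi_n,\widetilde\theta_n)\le F_{\lambda_0}(\psi)$ for every $n$. Letting $n\to\infty$ yields $F_{\lambda_0}(\widetilde\phi_{\lambda_0})\le F_{\lambda_0}(\psi)$ for all $\psi\in\mathfrak{C}^*_{\lambda_0}$, which is the assertion.

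The main obstacle, and the place where the specific structure of the problem matters, is obtaining strong rather than merely weak $H^1$ compactness: a bare energy bound on the rescaled functional only controls $\int\widetilde\phi_n'^2$, giving weak $H^1$ limits that would be insufficient to pass to the limit in the nonconvex $\theta$--coupling. The crucial point is therefore the uniform pointwise bound $|\widetilde\phi_n''|\le\lambda_0$, obtained by feeding the algebraic optimality relation for $\widetilde\theta_n$ back into the Euler--Lagrange equation for $\widetilde\phi_n$; this simultaneously upgrades compactness to $C^1$ and renders the penalty term quantitatively negligible. Once this is in hand, the convergence of energies is \emph{exact}, so no separate $\liminf$/$\limsup$ inequalities are required.
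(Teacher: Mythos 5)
Your proposal is correct and follows essentially the same route as the paper: both extract the uniform $L^\infty$ bound on $\widetilde\phi_n''$ from the Euler--Lagrange system of Proposition \ref{prop:local_EulerLagrange} and Lemma \ref{lem:algebraic_theta}, apply Ascoli--Arzel\`a for $C^1$ compactness, use the algebraic relation to control $\widetilde\theta_n-\widetilde\phi_n$, and compare energies against the diagonal competitors $(\psi,\psi)$. The only (harmless) differences are cosmetic: you phrase the final minimality step as a direct inequality rather than a contradiction, and you observe that the convergence of $\widetilde\theta_n$ is in fact uniform rather than merely a.e.
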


\begin{proof}
	By Proposition \ref{prop:local_EulerLagrange} and Lemma \ref{lem:algebraic_theta},
	for every $n\in \mathbb{N}$ the pair $(\widetilde\phi_n,\widetilde\theta_n)$
	satisfies almost everywhere the following system of equations:
	\[
		\begin{cases}
			-k_n\widetilde\phi_n'' + \widetilde\phi_n - \widetilde\theta_n = 0,\\
			\widetilde\phi_n - \widetilde\theta_n = -b_n(1-x)\cos\widetilde\theta_n.
		\end{cases}
	\]
	As a consequence,
	\[
		\widetilde\phi_n'' = -\frac{b_n}{k_n}(1-x)\cos\widetilde\theta_n
		\le \lambda_0,
	\]
	so the sequence $\widetilde\phi_n''$ is equibounded 
	with respect to the norm of $L^\infty([0,1])$.
	By the Ascoli-Arzelà theorem,
	$\widetilde\phi_n$ converges,
	up to subsequences, in the $C^1([0,1])$ norm to a function 
	$\widetilde\phi_{\lambda_0} \in \mathfrak{C}_{\lambda_0}$.
	By hypothesis, $b_n \to 0$,
	and using
	\[
		\widetilde\theta_n = \widetilde\phi_n - b_n(1-x)\cos\widetilde\theta_n,
		\quad \text{a.e. in $[0,1]$},
	\]
	we obtain that $\widetilde\theta$ converges to $\widetilde\phi_{\lambda_0}$ a.e..
	Therefore,
	by the dominated convergence theorem we have
	\begin{multline}
		\label{eq:convergence_proof1}
		\lim_{n\to\infty}
		\frac{1}{k_n}
		F_{b_n,k_n}(\widetilde\phi_n,\widetilde\theta_n)\\
		= \lim_{n\to\infty}
		\int_0^1
		\left(
			\frac{|\widetilde\phi_n'|^2}{2}
			+ \frac{b_n^2 (1-x)^2\cos^2\widetilde\theta_n}{2 k_n}
			- \frac{b_n}{k_n}(1-x)\sin\widetilde\theta_n
		\right)\text{d}x\\
		=
		\int_0^1
		\left(
			\frac{|\widetilde\phi_{\lambda_0}'|^2}{2}
			- \lambda_0(1-x)\sin\widetilde\phi_{\lambda_0}
		\right)\text{d}x
		=F_{\lambda_0}(\widetilde\phi_{\lambda_0}).
	\end{multline}
	It remains to show that $\widetilde\phi_{\lambda_0}$ is a minimizer for 
	$F_{\lambda_0}$ in $\mathfrak{C}_{\lambda_0}^*$.
	By contradiction, let $\psi \in C^*_{\lambda_0}$ be
	such that $F_{\lambda_0}(\psi)< F_{\lambda_0}(\widetilde\phi_{\lambda_0})$.
	Since
	\[
		\lim_{n\to\infty}\frac{1}{k_n}F_{b_n,k_n}(\psi,\psi) = F_{\lambda_0}(\psi)
	\]
	and \eqref{eq:convergence_proof1} holds,
	there exist  $\epsilon > 0$
	and $n$ such that
	\[
		\frac{1}{k_n}F_{b_n,k_n}(\widetilde\phi_n,\widetilde\theta_n)
		> F_{\lambda_0}(\widetilde\phi_{\lambda_0}) - \epsilon
		> F_{\lambda_0}(\psi) + \epsilon
		> \frac{1}{k_n}F_{b_n,k_n}(\psi,\psi),
	\]
	contradicting the minimality of $(\widetilde\phi_n,\widetilde\theta_n)$.
\end{proof}

\noindent Proposition \ref{prop:convergence} entails that
our aim is to study the behaviour of 
the minimizer $\widetilde\phi_\lambda \in \mathfrak{C}_\lambda^*$ of $F_{\lambda}$ 
as $\lambda$ goes to infinity.
In particular, by Remark \ref{remF_0},
we need to prove that if $\lambda$ is sufficiently large then
$\widetilde\phi_\lambda(x_\lambda) < -\pi$.
The next result provides a necessary condition 
for a function whose graph passes through $(x_\lambda,-\pi)$
to be a minimizer. This condition involves the left and right derivatives at $x_\lambda$. 
\begin{lemma}
	\label{lem:phiTouch}
	Let $\widetilde\phi_\lambda \in \mathfrak{C}^*_\lambda$
	be a minimizer of $F_{\lambda}$.
	If $\widetilde\phi_\lambda(x_\lambda) = -\pi$,
	then
	\begin{equation}
		\label{eq:opt-condition}
		\widetilde\phi_\lambda'(x_\lambda^-) 
		\le \widetilde\phi_\lambda'(x_\lambda^+).
	\end{equation}
\end{lemma}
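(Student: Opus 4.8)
The plan is to read off the sign of the derivative jump at $x_\lambda$ from the variational inequality satisfied by the constrained minimizer, using the regularity and the Euler--Lagrange equation already collected in Remark \ref{remF_0}. Since $\widetilde\phi_\lambda$ minimizes $F_\lambda$ over the convex set $\mathfrak{C}^*_\lambda$, for every admissible infinitesimal variation $\xi \in \mathcal{V}^*_\lambda(\widetilde\phi_\lambda)$ (see \eqref{eq:calV-phi}) one has
\[
	\text{d}F_\lambda(\widetilde\phi_\lambda)[\xi]
	= \int_0^1 \bigg(\widetilde\phi_\lambda'\,\xi' - \lambda(1-x)\cos\widetilde\phi_\lambda\,\xi\bigg)\,\text{d}x \ge 0 .
\]
By Remark \ref{remF_0} we have $\widetilde\phi_\lambda(x) < \phi^*_\lambda(x)$ for every $x \ne 0,x_\lambda$, so the contact set is exactly $\{0,x_\lambda\}$; as $\xi(0)=0$ for every $\xi\in H^1_*$, the only effective restriction in \eqref{eq:calV-phi} is $\xi(x_\lambda)\le 0$. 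In particular every $\xi\in C^\infty_c((0,1))$ with $\xi(x_\lambda)<0$ is admissible.

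Next I would invoke the regularity. By the argument of Lemma \ref{lem:regularity_Ploc_minimum}, which applies verbatim to $F_\lambda$ in view of Remark \ref{remF_0}, the minimizer $\widetilde\phi_\lambda$ belongs to $W^{2,\infty}$ on each of the intervals $[0,x_\lambda]$ and $[x_\lambda,1]$. Consequently $\widetilde\phi_\lambda'$ extends continuously up to $x_\lambda$ from both sides, so that the one-sided derivatives $\widetilde\phi_\lambda'(x_\lambda^-)$ and $\widetilde\phi_\lambda'(x_\lambda^+)$ are well defined, and integration by parts is legitimate on each subinterval separately.

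I would then fix an admissible $\xi$ as above, split the integral at $x_\lambda$, and integrate by parts on $[0,x_\lambda]$ and on $[x_\lambda,1]$. The interior contributions combine into $-\int_0^1(\widetilde\phi_\lambda'' + \lambda(1-x)\cos\widetilde\phi_\lambda)\,\xi\,\text{d}x$, which vanishes because the Euler--Lagrange equation $\widetilde\phi_\lambda'' + \lambda(1-x)\cos\widetilde\phi_\lambda = 0$ holds a.e.\ on $[0,1]$ (Remark \ref{remF_0}); the boundary terms at $0$ and $1$ vanish because $\xi(0)=0$ and $\xi$ has compact support. Only the jump term at $x_\lambda$ survives, yielding
\[
	0 \le \text{d}F_\lambda(\widetilde\phi_\lambda)[\xi]
	= \bigg(\widetilde\phi_\lambda'(x_\lambda^-) - \widetilde\phi_\lambda'(x_\lambda^+)\bigg)\,\xi(x_\lambda).
\]
Since $\xi(x_\lambda)<0$, this forces $\widetilde\phi_\lambda'(x_\lambda^-) - \widetilde\phi_\lambda'(x_\lambda^+)\le 0$, which is precisely \eqref{eq:opt-condition}.

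The only genuinely delicate point is the regularity input: one must know that $\widetilde\phi_\lambda$ is $W^{2,\infty}$ on each closed subinterval, so that the one-sided derivatives at $x_\lambda$ exist and the integration by parts produces a single clean boundary contribution there rather than an ill-defined distributional term. This is exactly what Remark \ref{remF_0}, through the transfer of Lemma \ref{lem:regularity_Ploc_minimum} to $F_\lambda$, guarantees; once it is in place, the remainder is a routine first-variation computation.
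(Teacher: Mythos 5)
Your proposal is correct and follows essentially the same route as the paper: test the first-variation inequality with an admissible variation satisfying $\xi(x_\lambda)<0$, use Remark \ref{remF_0} to get the Euler--Lagrange equation on $(0,x_\lambda)$ and $(x_\lambda,1)$, integrate by parts on each subinterval so that only the jump term $\bigl(\widetilde\phi_\lambda'(x_\lambda^-)-\widetilde\phi_\lambda'(x_\lambda^+)\bigr)\xi(x_\lambda)$ survives, and read off the sign. The only difference is that you make explicit the $W^{2,\infty}$ regularity on each closed subinterval needed to define the one-sided derivatives, which the paper leaves implicit.
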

\begin{proof}
	If $\widetilde\phi_\lambda$ is a minimizer, then 
	for every $\xi \in C^\infty_0([0,1])$ such that
	\[
		\xi(x) \le 0, \quad \forall x \in [0,1]
		\quad\text{and}\quad
		\xi(x_\lambda) < 0,
	\]
	we have $\text{d}F_\lambda(\widetilde\phi_\lambda)[\xi] \ge 0$, hence
	\[
		\int_0^1 \left(
			\widetilde\phi_\lambda'\xi' - \lambda(1-x)\cos\widetilde\phi_\lambda\ \xi
		\right)\text{d}x \ge 0.
	\]
	By Remark \ref{remF_0},
	$\widetilde\phi_\lambda$ does not coincide with 
	$\phi^*_\lambda$ except in $0$ and,
	by hypothesis, in $x_\lambda$.
	Therefore, it satisfies the Euler-Lagrange equation
	both in $(0,x_\lambda)$ and in $(x_\lambda,1)$
	and an integration by parts leads to
	\[
		\text{d}F_\lambda(\widetilde\phi_\lambda)[\xi]
		= \left(\widetilde\phi_\lambda'(x_\lambda^-) 
			-\widetilde\phi_\lambda'(x_\lambda^+) 
		\right)\xi(x_\lambda) \ge 0.
	\]
	By the arbitrariness of $\xi(x_\lambda) < 0$,
	we obtain \eqref{eq:opt-condition}.
\end{proof}

\noindent Due to Lemma \ref{lem:phiTouch},
it becomes important to estimate the behaviour of the 
left and right derivatives at $x_\lambda$
of the minimizer of $F_\lambda$ 
among all the functions in $\mathfrak{C}_\lambda$
whose graph passes through $(x_\lambda,-\pi)$.
To this aim, we separately study 
the functional in the two intervals $[0,x_\lambda]$
and $[x_\lambda,1]$
and we define the following sets
\[
	\mathfrak{L}_\lambda \coloneqq
	\bigg\{
		\phi \in H^1([0,x_\lambda], \mathbb{R}): \phi(x) \le \phi^*_\lambda(x)
		\ \forall x \in [0,x_\lambda],
		\phi(0) = 0 \text{ and }
		\phi(x_\lambda) = - \pi
	\bigg\}
\]
and 
\[
	\mathfrak{R}_\lambda \coloneqq
	\bigg\{
		\phi \in H^1([x_\lambda,1],\mathbb{R}): \phi(x) \le - \pi
		\ \forall x \in [x_\lambda,1],
		\phi(x_\lambda) = - \pi
	\bigg\}.
\]
On them, we define the functionals
$ L_\lambda\colon \mathfrak{L}_{\lambda} \to \mathbb{R} $
and
$ R_\lambda\colon \mathfrak{R}_{\lambda} \to \mathbb{R} $
as
\begin{equation}
	\label{eq:def-L}
	L_\lambda(\phi) \coloneqq
	\int_{0}^{x_\lambda}
	\left(\frac{\phi'^2}{2} - \lambda(1-x)\sin\phi\right)\text{d}x
\end{equation}
and
\begin{equation}
	\label{eq:def-R}
	R_\lambda(\phi) \coloneqq
	\int_{x_\lambda}^1
	\left(\frac{\phi'^2}{2} - \lambda(1-x)\sin\phi\right)\text{d}x.
\end{equation}

\noindent Let $\ell_\lambda \in \mathfrak{L}_\lambda$
and $r_\lambda \in \mathfrak{R}_{\lambda}$
be the minimizers of $L_\lambda$ and $R_\lambda$,
respectively.
Then a function $\psi_\lambda \in \mathfrak{C}^*_\lambda$
such that $\psi_\lambda(x_\lambda) = - \pi$
is a minimizer of $F_\lambda$ if and only if 
\begin{equation}
	\label{eq:def-psi}
	\psi_\lambda(x) = 
	\begin{cases}
		\ell_\lambda(x), &	\mbox{if } x \in [0,x_\lambda], \\
		r_\lambda(x), &	\mbox{if } x \in [x_\lambda,1],
	\end{cases}
\end{equation}
and, by Lemma \ref{lem:phiTouch}, if 
\begin{equation}
	\label{eq:opt-cond-lr}
	\ell_\lambda'(x_\lambda) \le r_\lambda'(x_\lambda).
\end{equation}
As a consequence, proving that \eqref{eq:opt-cond-lr} 
does not hold for $\lambda$ sufficiently large
implies that the minimizers of $F_\lambda$ in $\mathfrak{C}^*_\lambda$
does not pass through $(x_\lambda,-\pi)$.
Fig.~\ref{fig:Euler15} and Fig.~\ref{fig:Euler100} show 
$\ell_\lambda$ and $r_\lambda$
for $\lambda = 15$ and $\lambda = 100$, respectively.
In Fig.~\ref{fig:Euler15}, it can be noticed that \eqref{eq:opt-cond-lr}
holds, and indeed $\psi_\lambda$ defined as in \eqref{eq:def-psi}
is a global minimizer of $F_\lambda$ in $\mathfrak{C}^*_\lambda$.
In Fig.~\ref{fig:Euler100}, it can be noticed that \eqref{eq:opt-cond-lr}
does not holds, so the graph of the global minimizer $\widetilde\phi_\lambda$
does not pass through the point $(x_\lambda,-\pi)$. 

\begin{figure}[h]
	\centering
	\includegraphics[width=0.6\linewidth]{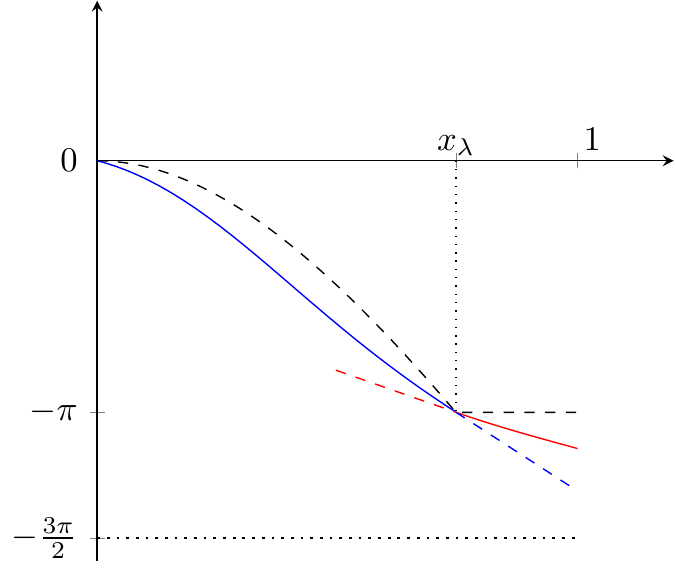}
	\caption{
		The functions $\ell_\lambda$ and $r_\lambda$ for  $\lambda = 15$;
		(numerically evaluated);
		it can be noticed that $\ell_\lambda'(x_\lambda) < r_\lambda'(x_\lambda)$ and the graph of
		$\widetilde\phi_\lambda$ passes through $(x_\lambda,-\pi)$.
	}
	\label{fig:Euler15}
\end{figure}
\begin{figure}[h]
	\centering
	\includegraphics[width=0.6\linewidth]{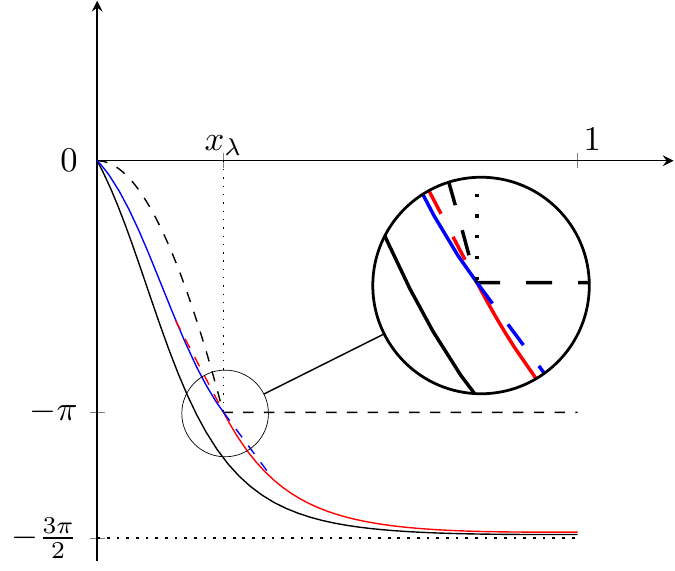}
	\caption{
		The functions $\ell_\lambda$ and $r_\lambda$ for  $\lambda = 100$
		(numerically evaluated);
		it can be noticed that $\ell_\lambda'(x_\lambda) > r_\lambda'(x_\lambda)$,
		so by Lemma \ref{lem:phiTouch}
		a function whose graph passes through $(x_\lambda,-\pi)$
		cannot be a minimizer of $F_\lambda$ in $\mathfrak{C}^*_\lambda$.
	}
	\label{fig:Euler100}
\end{figure}

\noindent Before giving the estimates for 
$\ell_\lambda'(x_\lambda)$
and 
$r_\lambda'(x_\lambda)$,
we need to 
define a function
$G\colon \mathbb{R}^+ \to \mathbb{R}$ as follows:
\begin{equation*}
	\label{eq:def-G}
	G(\mu) = 
	\int_{0}^{\pi}\frac{\text{d}\sigma}{\sqrt{\mu + 4\pi\sin\sigma}}.
\end{equation*}
\begin{remark}
	The function $G$ is strictly decreasing,
	$G(0) > 1$ and $\lim_{\mu \to \infty}G(\mu) = 0$.
	As a consequence, there exists an unique $E > 0$
	such that 
	\begin{equation}
		\label{eq:cond-E}
		\int_{0}^{\pi}\frac{\text{d}\sigma}{\sqrt{E + 4\pi\sin\sigma}} = 1.
	\end{equation}
\end{remark}

\begin{remark}
	\label{rem:G-properties}
	In both Lemma \ref{lem:der-sx} and Lemma \ref{lem:der-dx},
	we will exploit the following limit,
	obtained  from the definition of 
	$\phi^*_\lambda$ given in \eqref{eq:def_phiStar}:
	\begin{equation}
		\label{eq:lim_lambdax02}
		\lim_{\lambda \to \infty} \lambda x_\lambda^2 = 2\pi.
	\end{equation}
\end{remark}

\begin{lemma}
	\label{lem:der-sx}
	Let $\ell_\lambda \in \mathfrak{L}_\lambda$
	be the minimizer of $L_\lambda$.
	Let $E > 0$ be such that \eqref{eq:cond-E} holds.
	Then 
	\begin{equation*}
		\lim_{\lambda \to \infty} x_\lambda \ell_\lambda'(x_\lambda)
		=  - E.
	\end{equation*}
\end{lemma}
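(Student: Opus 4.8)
The plan is to exploit the collapse of the interval $[0,x_\lambda]$ as $\lambda\to\infty$: since \eqref{eq:lim_lambdax02} gives $\lambda x_\lambda^2\to 2\pi$, we have $x_\lambda\sim\sqrt{2\pi/\lambda}\to 0$, and rescaling this shrinking interval to $[0,1]$ turns the non-autonomous problem into an autonomous one. First I would record that, arguing exactly as in Remark \ref{remF_0} (that is, as in Proposition \ref{prop:local_EulerLagrange} and Corollary \ref{cor:notTouch}, adapted to $L_\lambda$ on $[0,x_\lambda]$ with the fixed data $\phi(0)=0$, $\phi(x_\lambda)=-\pi$), the minimizer $\ell_\lambda$ does not meet $\phi^*_\lambda$ in the open interval $(0,x_\lambda)$, so it solves the Euler--Lagrange equation
\[
	\ell_\lambda''(x)=-\lambda(1-x)\cos\ell_\lambda(x),\qquad x\in(0,x_\lambda),
\]
with $\ell_\lambda\in[-\pi,0]$, monotone decreasing by an analogue of Lemma \ref{lem:monotonicity}.

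Next I would set $x=x_\lambda t$ and $u_\lambda(t)\coloneqq\ell_\lambda(x_\lambda t)$ for $t\in[0,1]$, so that $u_\lambda(0)=0$, $u_\lambda(1)=-\pi$ and, crucially, $x_\lambda\ell_\lambda'(x_\lambda)=u_\lambda'(1)$. The Euler--Lagrange equation becomes
\[
	u_\lambda''(t)=-\lambda x_\lambda^2\,(1-x_\lambda t)\cos u_\lambda(t).
\]
Because $\lambda x_\lambda^2\to 2\pi$, $x_\lambda\to 0$ and $|\cos|\le 1$, the right-hand side is uniformly bounded, so $\{u_\lambda\}$ is bounded in $W^{2,\infty}([0,1])$; by Ascoli--Arzelà a subsequence converges in $C^1([0,1])$ to some $u_*$ with $u_*(0)=0$, $u_*(1)=-\pi$ and $u_*([0,1])\subset[-\pi,0]$, and passing to the limit in the equation (the coefficient $\lambda x_\lambda^2(1-x_\lambda t)$ converges uniformly to $2\pi$) yields the autonomous limit equation $u_*''=-2\pi\cos u_*$. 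To single out $u_*$ among the solutions of this pendulum-type boundary value problem, I would show it is the minimizer of the limit functional $u\mapsto\int_0^1\!\big(\tfrac12 u'^2-2\pi\sin u\big)\text{d}t$ on $\{u\in H^1:\ u(0)=0,\ u(1)=-\pi,\ u\le -\pi t^2\}$, obtained by renormalizing $x_\lambda L_\lambda$ and letting $\lambda\to\infty$ (the rescaled constraint $\phi^*_\lambda(x_\lambda t)$ converges to $-\pi t^2$); this is precisely the comparison scheme already used in Proposition \ref{prop:convergence}.

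Finally I would use the first integral of the autonomous equation: multiplying by $u_*'$ and integrating gives $\tfrac12 u_*'^2+2\pi\sin u_*\equiv C$, and evaluating at $t=1$ (where $\sin u_*=\sin(-\pi)=0$) relates the conserved energy $C$ to $u_*'(1)$. One checks $C>0$, otherwise $u_*(0)=0$ and $u_*'(0)=0$ would force $u_*\equiv 0$, contradicting $u_*(1)=-\pi$; hence $u_*$ is strictly monotone with no turning point. Separating variables in $\text{d}t=\text{d}u_*/u_*'$, substituting $u_*=-\sigma$, and imposing $\int_0^1\text{d}t=1$ produces an integral of exactly the form defining $G$, evaluated at the conserved energy. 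Comparing with the defining relation \eqref{eq:cond-E} and using the strict monotonicity of $G$ identifies this energy, and hence $u_*'(1)$, uniquely with the value prescribed in the statement; uniqueness then upgrades the subsequential convergence to convergence of the whole family $u_\lambda'(1)=x_\lambda\ell_\lambda'(x_\lambda)$.

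The step I expect to be the main obstacle is the one in the third paragraph: rigorously upgrading the variational convergence of the renormalized problems to $C^1$-convergence of $u_\lambda$ toward a genuine \emph{minimizer} $u_*$ of the limit functional, and verifying that $u_*$ stays strictly below the limit constraint $-\pi t^2$ in the interior, so that $u_*$ solves the unconstrained limit equation on all of $(0,1)$ and the first integral is valid throughout. The $W^{2,\infty}$ bound furnished directly by the Euler--Lagrange equation is what makes both the compactness and the absence of a boundary layer at the endpoints tractable.
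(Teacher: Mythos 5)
Your proposal follows essentially the same route as the paper's proof: rescale $[0,x_\lambda]$ to $[0,1]$ via $x=x_\lambda t$, use \eqref{eq:lim_lambdax02} to pass to the autonomous pendulum equation $u''+2\pi\cos u=0$ with $u(0)=0$, $u(1)=-\pi$, identify the $C^1$ limit as the minimizer of the limit functional by the comparison scheme of Proposition \ref{prop:convergence}, and read off $u'(1)$ from the first integral together with the quadrature condition \eqref{eq:cond-E}. The step you flag as the main obstacle is handled in the paper exactly as you suggest (it invokes the argument of Proposition \ref{prop:convergence}), and the monotonicity of the limit is obtained there by citing a general theorem on minimizers of autonomous problems rather than through the energy level.

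Two details need repair. First, your exclusion of the degenerate energy level is wrongly justified: if the conserved quantity $C$ vanishes, then $u_*(0)=0$ and $u_*'(0)=0$ do \emph{not} force $u_*\equiv 0$, because $u_*''(0)=-2\pi\cos 0\neq 0$; the solution with these Cauchy data is the nontrivial orbit of the pendulum equation joining $0$ to $-\pi$, and it is excluded because its travel time equals $G(0)>1$, contradicting $u_*(1)=-\pi$ --- equivalently, the relation $G(2C)=1$ obtained by quadrature, combined with $G(0)>1$ and the strict monotonicity of $G$, already forces $2C$ to be the positive root $E$ of \eqref{eq:cond-E}. Second, carrying your computation through yields $u_*'(1)=-\sqrt{E}$, not $-E$: the value $-E$ displayed in the statement of Lemma \ref{lem:der-sx} is a typo in the paper (its own proof, and the use made of the lemma in Lemma \ref{lemma_N}, both give $-\sqrt{E}$), so ``the value prescribed in the statement'' is not what your argument actually produces, and you should not paper over that discrepancy by appealing to the statement.
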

\begin{proof}
	By the change of variable $x = x_\lambda t$,
	for every $\lambda$ we can reparametrize 
	the integral functional $L_\lambda$ defined in
	\eqref{eq:def-L} on the interval $[0,1]$.
	Hence, setting
	\[
		\varphi(t) = \phi(x_\lambda t) = \phi(x),
	\]
	we obtain
	\begin{equation}
		\label{eq:F1-01}
		L_\lambda(\phi) = 
		\frac{1}{x_\lambda}
		\int_{0}^{1}\left(
			\frac{1}{2}(\varphi')^2
			- \lambda x_\lambda^2 (1 - x_\lambda t)\sin\varphi
		\right)\text{d}t.
	\end{equation}
	As a consequence, the function
	$\widetilde\ell_\lambda\colon [0,1] \to \mathbb{R}$,
	defined as
	$\widetilde\ell_\lambda(t) = \ell_\lambda(x_\lambda t)$,
	minimizes the integral
	\[
		\int_{0}^{1}\left(
			\frac{1}{2}(\varphi')^2
			- \lambda x_\lambda^2 (1 - x_\lambda t)\sin\varphi
		\right)\text{d}t,
	\]
	and it never equals the function
	$\varphi^*_\lambda(t) = \phi^*_\lambda(x_\lambda t)$.
	Therefore, it is a solution of the following Dirichlet problem
	\[
		\begin{cases}
			y'' + \lambda x_\lambda^2(1- x_\lambda t)\cos y = 0,\\
			y(0)= 0, \quad y(1) = -\pi.
		\end{cases}
	\]
	Arguing similarly as in the proof of Proposition \ref{prop:convergence}
	and recalling \eqref{eq:lim_lambdax02},
	$\widetilde\ell_\lambda$ converges in the $C^1$ norm to the minimizer
	$\widetilde{z}$ of the functional 
	\[
		L(z) =
		\int_{0}^{1}\left(
			\frac{1}{2}(z')^2
			-2\pi \sin z
		\right)\text{d}t,
	\]
	subject to 
	\[
		z(t)\le \lim_{\lambda \to \infty}\varphi^*_\lambda(t)
		= \lim_{\lambda\to \infty}
		\left[
			\frac{\lambda x_{\lambda}^2}{2}t^2
			\left(\frac{x_\lambda}{3}t - 1 \right) - \frac{x_\lambda^2}{2}t^2
		\right]
		= -\pi t^2,
		\quad \forall t \in [0,1].
	\]
	Therefore, $\widetilde{z}$ is a solution of the
	following Dirichlet problem
	\[
		\begin{cases}
			z'' + 2 \pi \cos z = 0,\\
			z(0)= 0, \quad z(1) = -\pi,
		\end{cases}
	\]
	and 
	\begin{equation}
		\label{eq:z1-velo-repar}
		\lim_{\lambda \to \infty} x_\lambda \ell_\lambda'(x_\lambda^-)
		= \lim_{\lambda \to \infty} (\widetilde\ell_\lambda)'(1)
		= \widetilde{z}'(1).
	\end{equation}
	Being the minimizer of a suitably regular autonomous problem, $\widetilde{z}$ is a monotone function
	(see, e.g., \cite[Theorem 3.1]{cupini2007}),
	so we have that $\widetilde{z}'(t)\le 0$ for all $t \in [0,1]$.
	Since $z'' + 2\pi\cos z = 0$ is an autonomous differential equation,
	there exists $E \in \mathbb{R}$ such that
	\begin{equation}
		\label{eq:z1-constE}
		\frac{1}{2}(\widetilde{z}')^2 +  2 \pi \sin \widetilde{z}
		=\frac{1}{2} E, \quad \forall t \in [0,1].
	\end{equation}
	Moreover, being $\widetilde{z}(1) = -\pi$,
	we obtain that $(\widetilde{z}'(1))^{2} = E$.
	Since $\widetilde{z}'(t) \le 0$,
	we have $\widetilde{z}'(1) = -\sqrt{E}$ and
	from \eqref{eq:z1-constE} we obtain
	\[
		\frac{\widetilde{z}'}{\sqrt{E - 4\pi \sin \widetilde{z}}} = -1.
	\]
	Recalling that $\widetilde{z}(0) = 0$ and $\widetilde{z}(1) = -\pi$,
	with some easy computations we obtain
	\begin{equation*}
		\int_{0}^{\pi}\frac{\text{d}\sigma}{\sqrt{E + 4\pi\sin\sigma}} = 1,
	\end{equation*}
	hence $E$ satisfies \eqref{eq:cond-E}.
	By Remark \ref{rem:G-properties},
	there exists a unique $E > 0$ which satisfies \eqref{eq:cond-E}
	and using also
	\eqref{eq:z1-velo-repar}
	we have
	\[
		\lim_{\lambda \to \infty} x_\lambda\ell_\lambda'(x_\lambda^-)
		= - \sqrt{E}.
	\]
\end{proof}

\begin{lemma}
	\label{lem:der-dx}
	Let $r_\lambda \in \mathfrak{R}_\lambda$
	be the minimizer of $R_\lambda$.
	Then 
	\begin{equation*}
		\lim_{\lambda \to \infty} x_\lambda r'(x_\lambda)
		=  - 2\sqrt{\pi}.
	\end{equation*}
\end{lemma}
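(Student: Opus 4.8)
The plan is to mirror the proof of Lemma \ref{lem:der-sx}, the only essential difference being that here the rescaled domain is unbounded. First I would reparametrize via $x = x_\lambda t$ and set $\widetilde r_\lambda(t) \coloneqq r_\lambda(x_\lambda t)$ on $[1,1/x_\lambda]$. Exactly as in \eqref{eq:F1-01}, the functional $R_\lambda$ becomes, up to the factor $1/x_\lambda$,
\[
	\int_{1}^{1/x_\lambda}\left(\tfrac12(\varphi')^2 - \lambda x_\lambda^2(1-x_\lambda t)\sin\varphi\right)\text{d}t,
\]
so $\widetilde r_\lambda$ minimizes this integral subject to $\varphi(1)=-\pi$ and $\varphi\le-\pi$, with a free (natural) condition at the right endpoint. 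By an argument analogous to Lemma \ref{lem:noTouch-after-xlambda}, $r_\lambda$ is strictly decreasing on $[x_\lambda,1]$, so $r_\lambda(1)<-\pi$ and the free endpoint gives $r_\lambda'(1)=0$. The chain rule yields the key relation $x_\lambda\, r_\lambda'(x_\lambda^+) = \widetilde r_\lambda'(1)$, so it suffices to compute $\lim_{\lambda\to\infty}\widetilde r_\lambda'(1)$.

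Next, using $\lambda x_\lambda^2\to 2\pi$ from \eqref{eq:lim_lambdax02} and $x_\lambda\to0$ (so that $x_\lambda t\to0$ uniformly on compact $t$-intervals), the Euler--Lagrange equation $\widetilde r_\lambda'' = -\lambda x_\lambda^2(1-x_\lambda t)\cos\widetilde r_\lambda$ has an equibounded right-hand side; arguing as in Proposition \ref{prop:convergence} via Ascoli--Arzelà on compact subintervals, $\widetilde r_\lambda$ converges in $C^1_{\mathrm{loc}}([1,\infty))$ to a solution $\widetilde w$ of the autonomous equation $w''+2\pi\cos w=0$ with $w(1)=-\pi$. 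Since the equation is autonomous, $\tfrac12(w')^2+2\pi\sin w$ is constant along $\widetilde w$; evaluating at $t=1$, where $\sin(-\pi)=0$, this constant equals $\tfrac12\widetilde w'(1)^2$.

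To pin down that constant I would show $\widetilde w(t)\to-\tfrac32\pi$ and $\widetilde w'(t)\to0$ as $t\to\infty$: indeed $-\tfrac32\pi$ is the rest point of the autonomous dynamics at which the limiting potential $-2\pi\sin w$ is minimal on $\{w\le-\pi\}$, and it is the state selected in the bulk by minimality. Since $\sin(-\tfrac32\pi)=1$, the conserved quantity equals $2\pi$, whence $\tfrac12\widetilde w'(1)^2=2\pi$ and, $\widetilde w$ being decreasing, $\widetilde w'(1)=-2\sqrt\pi$, giving the claim. The main obstacle is precisely this non-compactness of the limiting interval, which blocks a direct $\Gamma$-limit of the rescaled functionals and makes the selection of the asymptotic rest state (hence of the energy level) the crux. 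A clean way to bypass the far-field analysis is a first-integral identity in the original variables: multiplying $r_\lambda''+\lambda(1-x)\cos r_\lambda=0$ by $r_\lambda'$ and integrating over $[x_\lambda,1]$, and using $\sin r_\lambda(x_\lambda)=\sin(-\pi)=0$ together with $r_\lambda'(1)=0$, one obtains
\[
	r_\lambda'(x_\lambda)^2 = 2\lambda\int_{x_\lambda}^1\sin r_\lambda\,\text{d}x .
\]
Multiplying by $x_\lambda^2$ and invoking $\lambda x_\lambda^2\to2\pi$ together with the bulk convergence $r_\lambda\to-\tfrac32\pi$ in $L^1(0,1)$ (which forces $\int_{x_\lambda}^1\sin r_\lambda\,\text{d}x\to1$, as $|\sin a-\sin b|\le|a-b|$) yields $x_\lambda^2\,r_\lambda'(x_\lambda)^2\to 4\pi$, and the sign of $r_\lambda'(x_\lambda)$ gives $\lim_{\lambda\to\infty}x_\lambda\, r_\lambda'(x_\lambda)=-2\sqrt\pi$. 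This reduces the whole lemma to establishing the $L^1$ convergence $r_\lambda\to-\tfrac32\pi$, which is exactly the bulk (boundary-layer) statement underlying the selection above.
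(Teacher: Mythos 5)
Your first route is essentially the paper's proof: rescale by $x=x_\lambda t$, use the confinement $\widetilde r_\lambda\in(-\tfrac32\pi,-\pi)$ obtained as in Lemmas \ref{lem:tildephi_ge_32pi} and \ref{lem:noTouch-after-xlambda}, pass to the limit in the resulting Cauchy problem to get a solution $w$ of $w''+2\pi\cos w=0$ on $[1,\infty)$ with $w(1)=-\pi$, $w'(1)=-\nu$, and read $\nu$ off the conserved energy once $w(t)\to-\tfrac32\pi$ and $w'(t)\to0$ are known. Two caveats. First, the compactness step needs a uniform bound on $\nu_\lambda=-\widetilde r_\lambda'(1)$, which does not follow from the $L^\infty$ bound on $\widetilde r_\lambda''$ alone; the paper obtains $\nu_\lambda\le 2\pi$ for large $\lambda$ by integrating the equation twice over $[1,2]$ and using the confinement, and this should be made explicit rather than folded into ``arguing as in Proposition \ref{prop:convergence}.'' Second, and more substantively, the selection of the asymptotic rest state is exactly the step you leave open, and ``the state selected in the bulk by minimality'' is not yet an argument (it is also the step the paper treats most briefly). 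It can be closed with no far-field minimality discussion at all, using only the confinement and conservation of $E=\tfrac12(w')^2+2\pi\sin w=\tfrac12\nu^2$: if $\tfrac12\nu^2>2\pi$ the orbit reaches $-\tfrac32\pi$ in finite time with $w'<0$ and exits the interval from below; if $\tfrac12\nu^2<2\pi$ (including $\nu=0$, since then $w''(1)=2\pi>0$) it turns around at a regular turning point before $-\tfrac32\pi$ and re-enters $\{w>-\pi\}$ in finite time; both contradict $w\in[-\tfrac32\pi,-\pi]$ on all of $[1,\infty)$, forcing $\nu^2=4\pi$. Your alternative first-integral identity in the original variables is correct and tidy, but, as you yourself note, it trades the far-field analysis for the $L^1$ bulk convergence $r_\lambda\to-\tfrac32\pi$, which is the same boundary-layer statement in disguise; it does not circumvent the crux.
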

\begin{proof}
	The proof is similar to the one of Lemma \ref{lem:der-sx}.
	By the change of variable $x = x_\lambda t$, 
	we re-parameterize the integral functional $R_\lambda$ defined in \eqref{eq:def-R}
	on the interval $[1,1/x_\lambda]$.
	As a consequence, the function 
	$\widetilde{r}_\lambda\colon [1,1/x_\lambda] \to \mathbb{R}$
	given by $\widetilde{r}_\lambda(t) = r_\lambda(x_\lambda t)$
	is a minimizer for the functional 
	\[
		\varphi \mapsto
		\int_{0}^{1}\left(
			\frac{1}{2}(\varphi')^2
			- \lambda x_\lambda^2 (1 - x_\lambda t)\sin\varphi
		\right)\text{d}t,
	\]
	with the constraint $\widetilde{r}_\lambda(1) = -\pi$ and
	$\widetilde{r}_\lambda(t) \le -\pi$.
	Let us notice that
	\begin{equation}
		\label{eq:r-rescale-vel}
		x_\lambda r_\lambda'(x_\lambda)
		= \widetilde{r}_\lambda(1).
	\end{equation}
	Using arguments similar to Lemma \ref{lem:tildephi_ge_32pi} and
	Lemma \ref{lem:noTouch-after-xlambda},
	we obtain that $\widetilde{r}_\lambda(t) \in(-\frac{3}{2} \pi, -\pi)$
	for all $t \in (1,1/x_\lambda]$.
	Therefore, it is the solution of the following Cauchy problem
	\begin{equation}
		\label{eq:Cauchy-zlambda}
		\begin{cases}
			z'' + \lambda x_\lambda^2 (1 - x_\lambda t) \cos z = 0,\\
			z(1)= -\pi,\\
			z'(1) = -\nu_\lambda,
		\end{cases}
	\end{equation}
	for a suitable $\nu_\lambda > 0$.
	Let us show that $\nu_\lambda > 0$ is upper bounded.
	For every $\lambda $ we have
	\[
		z'(t) + \nu_\lambda = \int_{1}^{t}z''(s)\text{d}s.
	\]
	If $\lambda$ is sufficiently large, $1/x_\lambda > 2$ and we 
	can integrate the both sides of the previous equation
	on the interval 
	$[1,2]$.
	Recalling that $z(1) = - \pi$ and that $z(2) > -\frac{3}{2}\pi$, we obtain
	\[
		\begin{multlined}
			\nu_\lambda
			= - \int_1^2 z'(t)\text{d}t  + \int_{1}^{2}\int_{1}^{t}z''(s)\text{d}s\text{d}t \\
			=z(1) - z(2)
			- \int_{1}^{2}\int_{1}^{t}\lambda x^2_\lambda(1 - x_\lambda s)\cos z\  \text{d}s
			\text{d}t
			\le \frac{\pi +  \lambda x_\lambda^2}{2}.
		\end{multlined}
	\]
	By \eqref{eq:lim_lambdax02}, if $\lambda$ is sufficiently large we have
	\[
		\nu_\lambda \le \frac{\pi +  2\pi}{2} + \frac{\pi}{2} = 2\pi.
	\]
	Therefore,
	there exists $\nu > 0$ such that,
	up to subsequences,
	$\nu_\lambda \to \nu$ as $\lambda \to \infty$
	and, on every compact set $[1,M]$,
	the solutions of \eqref{eq:Cauchy-zlambda} uniformly converges to 
	the solution $w\colon [1,+\infty) \to \mathbb{R}$ of the following Cauchy problem
	\[
		\begin{cases}
			z'' + 2\pi \cos z = 0,\\
			z(1)= 0,\\
			z'(1) = -\nu.
		\end{cases}
	\]
	By the minimality conditions on $\widetilde{r}_\lambda$,
	in particular by $\widetilde{r}'_\lambda(1/x_\lambda) = 0$,
	we obtain that 
	$\nu > 0$ is such that 
	$\lim_{t \to \infty}w(t) = - \frac{3}{2}\pi$
	and
	$\lim_{t \to \infty}w'(t) = 0$.
	Since $z'' + 2\pi \cos z = 0$
	is an autonomous differential equation,
	there exists a constant $E > 0$ such that
	\[
		\frac{1}{2}(w')^2 + 2\pi\sin w = E,
	\]
	and since $\sin w(1) = 0$ we obtain
	\[
		E = \frac{1}{2}\nu^2.
	\]
	Moreover, since $\lim_{t \to \infty}w(t) = -\frac{3}{2}\pi$
	and $\lim_{t \to \infty}w'(t) = 0$,
	we have that 
	\[
		E = \frac{1}{2}\nu^2
		= \lim_{t \to \infty} \frac{1}{2}(w'(t))^2+ 2\pi\sin w'(t)
		= 2\pi,
	\]
	hence $\nu^2 = 4 \pi$.
	Thus, using also \eqref{eq:r-rescale-vel},
	we obtain 
	\[
		\lim_{\lambda \to \infty}x_\lambda r'(x_\lambda)
		= \lim_{\lambda \to \infty} - \nu_\lambda 
		= - \nu = -2\sqrt{\pi}.
	\]
\end{proof}

\subsection{Existence of local minimizers distinct form the global ones}
\label{sub:33}

\begin{lemma}
	\label{lemma_N}
	Let $(\widetilde\phi,\widetilde\theta)$ be a global minimizer
	of $F_{b,k}$ in $\mathfrak{S}^*_\lambda$,
	whose existence is ensured by Proposition \ref{prop:P*existence}.
	Then, for sufficiently small $b$ and sufficiently large $\lambda$, we have
	\[
		\widetilde{\phi}(x)<\phi_\lambda^*(x),
		\quad \forall x \ne 0.
	\]
\end{lemma}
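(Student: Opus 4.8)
The plan is to reduce the claim to a single corner value. By Corollary \ref{cor:notTouch} we already know that the constrained minimizer satisfies $\widetilde\phi(x) < \phi_\lambda^*(x)$ for every $x \ne 0, x_\lambda$, so the only possibility left to exclude is $\widetilde\phi(x_\lambda) = \phi_\lambda^*(x_\lambda) = -\pi$ (equality, since the constraint forces $\widetilde\phi(x_\lambda) \le -\pi$). I would first establish the sharper statement for the Euler functional $F_\lambda$, and then transfer it to the Timoshenko minimizer $\widetilde\phi$ through the $C^1$-convergence of Proposition \ref{prop:convergence}.

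For the Euler step, I would use the splitting \eqref{eq:def-psi}: a minimizer of $F_\lambda$ in $\mathfrak{C}^*_\lambda$ whose graph passes through $(x_\lambda,-\pi)$ must be the gluing of $\ell_\lambda$ and $r_\lambda$, and by Lemma \ref{lem:phiTouch} such a gluing can be a minimizer only if the matching condition \eqref{eq:opt-cond-lr}, namely $\ell_\lambda'(x_\lambda) \le r_\lambda'(x_\lambda)$, holds. Multiplying by $x_\lambda > 0$ and letting $\lambda \to \infty$, the limits established in Lemma \ref{lem:der-sx} and Lemma \ref{lem:der-dx}, i.e. $x_\lambda \ell_\lambda'(x_\lambda) \to -\sqrt{E}$ and $x_\lambda r_\lambda'(x_\lambda) \to -2\sqrt\pi$, turn this into the limiting inequality $-\sqrt E \le -2\sqrt\pi$, that is $E \ge 4\pi$, with $E$ the constant of \eqref{eq:cond-E}. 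The decisive computation is therefore to prove the opposite, $E < 4\pi$. Since $G$ is strictly decreasing and $G(E) = 1$, this is equivalent to $G(4\pi) < 1$, which I would verify explicitly by writing $1 + \sin\sigma = 2\sin^2(\tfrac\sigma2 + \tfrac\pi4)$, so that
\[
	G(4\pi) = \frac{1}{2\sqrt\pi}\int_0^\pi \frac{\text{d}\sigma}{\sqrt{1+\sin\sigma}} = \frac{\sqrt2\,\ln(1+\sqrt2)}{\sqrt\pi} \approx 0.70 < 1.
\]
Hence $\sqrt E < 2\sqrt\pi$, so for $\lambda$ large enough the limiting inequality is reversed and therefore \eqref{eq:opt-cond-lr} itself fails; thus no minimizer of $F_\lambda$ passes through $(x_\lambda,-\pi)$. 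Together with Remark \ref{remF_0}, this produces a threshold $\lambda_0$ such that every minimizer $\widetilde\phi_\lambda$ of $F_\lambda$ with $\lambda > \lambda_0$ satisfies $\widetilde\phi_\lambda(x) < \phi_\lambda^*(x)$ for all $x \ne 0$.

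For the transfer step, I would fix $\lambda > \lambda_0$ and argue by contradiction in $b$. Suppose the conclusion failed along a sequence $b_n \to 0$ with $b_n/k_n = \lambda$; since the constraint forces $\widetilde\phi_n(x_\lambda) \le -\pi$, failure means $\widetilde\phi_n(x_\lambda) = -\pi$ for all $n$. By Proposition \ref{prop:convergence} a subsequence of $\widetilde\phi_n$ converges in the $C^1$ norm to a minimizer $\widetilde\phi_\lambda$ of $F_\lambda$, whence $\widetilde\phi_\lambda(x_\lambda) = \lim_n \widetilde\phi_n(x_\lambda) = -\pi$, contradicting the Euler step. Therefore, for this fixed $\lambda$ and all sufficiently small $b$, the Timoshenko minimizer obeys $\widetilde\phi(x_\lambda) < -\pi$; combined with Corollary \ref{cor:notTouch} this yields $\widetilde\phi(x) < \phi_\lambda^*(x)$ for every $x \ne 0$, as claimed.

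The main obstacle I expect is exactly the quantitative comparison $E < 4\pi$ between the two limiting slopes: the whole argument hinges on the strict inequality $G(4\pi) < 1$, i.e. on the left-limit slope $-\sqrt E$ lying strictly above the right-limit slope $-2\sqrt\pi$. A secondary delicate point is that Proposition \ref{prop:convergence} provides convergence only up to subsequences and the Euler minimizer need not be unique; this is harmless, because the property $\widetilde\phi_\lambda(x_\lambda) < -\pi$ holds for \emph{every} Euler minimizer, so the contradiction applies to any convergent subsequence.
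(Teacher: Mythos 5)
Your proposal is correct and follows essentially the same route as the paper: reduction to the corner value $\widetilde\phi(x_\lambda)$ via Corollary \ref{cor:notTouch}, exclusion of $\widetilde\phi_\lambda(x_\lambda)=-\pi$ for the Euler functional through Lemma \ref{lem:phiTouch} and the slope limits of Lemmas \ref{lem:der-sx} and \ref{lem:der-dx}, and then the transfer to $F_{b,k}$ by contradiction using the $C^1$-convergence of Proposition \ref{prop:convergence}. The only difference is that you evaluate $G(4\pi)=\sqrt{2}\,\ln(1+\sqrt{2})/\sqrt{\pi}\approx 0.70$ exactly, whereas the paper settles for the cruder bound $G(4\pi)<\sqrt{\pi}/2<1$; both suffice.
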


\begin{proof}
	By Corollary \ref{cor:notTouch}, we have that 
	$\widetilde\phi$ can be equal to $\phi^*_\lambda$ 
	only in $0$ and $x_\lambda$.
	So our aim is to prove that if $b$ and $k/b$ are sufficiently small
	(or equivalently if $b$ is sufficiently small and $\lambda$ is sufficiently large),
	then 
	\[
		\widetilde\phi(x_\lambda) < -\pi.
	\]
	As a first step,
	let us show that if $\lambda = b/k$ is sufficiently large,
	then $\widetilde\phi_\lambda(x_\lambda) < -\pi$,
	where $\widetilde\phi_\lambda$ is the minimizer of the functional 
	$F_\lambda$ defined in \eqref{eq:def-Flambda}.
	By Lemma \ref{lem:phiTouch}
	and the definitions of $\ell_\lambda$ and $r_\lambda$,
	we need to prove that for $\lambda$ sufficiently large we have
	\[
		\ell_\lambda'(x_\lambda) > r_\lambda'(x_\lambda),
	\]
	namely we need to prove that \eqref{eq:opt-cond-lr} does not hold.
	Using Lemma \ref{lem:der-sx} and 
	Lemma \ref{lem:der-dx}, it suffices to prove that 
	\[
		-\sqrt{E} > -  2\sqrt{\pi},
	\]
	or, equivalently, that $4\pi > E$.
	Since $E$ satisfies \eqref{eq:cond-E} and $G$ is a decreasing function,
	we need to show that 
	\[
		1 = G(E) > G(4\pi).
	\]
	Since the $sin(\sigma)> 0$ for every $\sigma \in (0,\pi)$,
	we have
	\[
		G(4\pi)
		= \int_{0}^{\pi}\frac{\text{d}\sigma}{\sqrt{4\pi + 4\pi\sin\sigma}}
		< \frac {\sqrt{\pi}}{2} < 1,
	\]
	hence we infer that if $\lambda$ is sufficiently large,
	then $\widetilde\phi_\lambda(x_\lambda) < -\pi$.

	\noindent Therefore, let us fix such a $\lambda$.
	By contradiction, let
	$(b_n,k_n)_{n \in \mathbb{N}} \subset \mathbb{R}^+ \times \mathbb{R}^+$
	be a sequence such that $b_n \to 0$, $b_n / k_n = \lambda$ for all $n$
	and the sequence of minimizers of $F_{b_n,k_n}$ in
	$\mathfrak{S}^*_\lambda$, denoted by 
	$(\widetilde\phi_n,\widetilde\theta_n)$, is such that
	\[
		\widetilde\phi_n(x_\lambda) = - \pi, \quad \forall n \in \mathbb{N}.
	\]
	By Proposition \ref{prop:convergence},
	there exists a subsequence of $\widetilde\phi_n$
	that converges uniformly to $\widetilde\phi_\lambda$.
	Since $\widetilde\phi_\lambda(x_\lambda) < -\pi$,
	this is absurd, and we are done.

\end{proof}

\begin{remark}
	Lemma \ref{lemma_N} is not enough to conclude that
	$(\widetilde{\phi},\widetilde{\theta})$
	is a local minimizer of the functional $F$ in $\mathfrak{S}$,
	since $\widetilde{\phi}$ belongs to $\partial \mathfrak{C}^*_\lambda$.
	Indeed, consider the sequence $(f_k)_k\subset \mathfrak{C}^*$ defined as follows:
	\[
		f_k=
		\begin{cases}
			kx, &		\mbox{for } x\in [0,k^{-3}], \\
			k^{-2}, &		\mbox{for }  x\in (k^{-3},1].
		\end{cases}
	\]
	Clearly we have $\lVert f_k\rVert_{H^1}\rightarrow 0$ when $k\rightarrow \infty$. 
	For every $C$-Lipschitz $\phi\in \mathfrak{C}^*_\lambda$, $\phi(x)+f_{k}(x)>0$ for $x\in [0,k^{-3}]$ if $k>C$. 
	Therefore every Lipschitz-regular element of $\mathfrak{C}^*$
	belongs to $\partial \mathfrak{C}^*$, hence
	$\tilde\phi \in \partial\mathfrak{C}^*$.
\end{remark}

\noindent As a consequence of the previous remark,
we need to show that there is a sufficiently small open ball
$\widetilde{\mathcal{B}}$
centered in $\widetilde{\phi}$ such that
$F(\widetilde{\phi},\widetilde{\theta})\le F(\phi,\theta)$
for every $(\phi,\theta)\in \widetilde{\mathcal{B}}\times L^2$.
We obtain this result by a $\Gamma$--convergence argument,
for which we need the following definitions.

\begin{definition}
	Fix $b,k > 0$ and set $\lambda = b/k$.
	For each real number $\epsilon > 0$
	we define the function 
	$\phi^*_{\lambda,\epsilon}\colon [0,1] \to \mathbb{R}$ as follows:
	\begin{equation*}
		\phi_{\lambda,\epsilon}^*(x)=
		\max\left\{
			\frac{\lambda}{2}x^2 \left(\frac{x}{3} -1\right)
			- \frac{1}{2}x^2 + \epsilon,
			- \pi
		\right\}.
	\end{equation*}
	We denote by $x_{\lambda,\epsilon}$ the least $x$
	such that $\phi_{\lambda,\epsilon}^* = -\pi$,
	that is:
	\[
		x_{\lambda,\epsilon}=
		\min\left\{x \in [0,1]: \phi^*_n(x) = -\pi\right\},
	\]
	and we define
	\begin{equation*}
		\mathfrak{C}_{\epsilon}^* \coloneqq
		\left\{
			\phi \in H^{1}_{*}: \phi(x) \le \phi_{\lambda,\epsilon}^*(x),
			\ \forall x \in [0,1]
		\right\}.
	\end{equation*}
	Recalling the definition of $\Theta\colon H^1_* \to L^2$
	given in Corollary \ref{cor:Theta},
	we define the functional
	$
	\mathcal{F}_{\epsilon}\colon H^1_* \to \mathbb{R}
	$
	as follows:
	\[
		\mathcal{F}_{\epsilon}(\phi) =
		\begin{cases}
			F_{b,k}(\phi,\Theta(\phi)),
				&		\mbox{if } \phi \in \mathfrak{C}^*_\epsilon, \\
			+\infty, &		\mbox{otherwise},
		\end{cases}
	\]
	and the functional $\mathcal{F}\colon H^1_*([0,1],\mathbb{R}) \to \mathbb{R}$ as follows:
	\begin{equation*}
		\mathcal{F}(\phi) = 
		\begin{cases}
			F_{b,k}(\phi,\Theta(\phi)), &		\mbox{if } \phi \in \mathfrak{C}^*, \\
			+\infty, &		\mbox{otherwise}.
		\end{cases}
	\end{equation*}
\end{definition}
\begin{remark}
	If $b < 1$ and $(\widetilde\phi,\widetilde\theta) \in \mathfrak{S}^*$
	is a global minimizer for $F_{b,k}$,
	then $\widetilde\theta = \Theta(\widetilde\phi)$ a.e.,
	thus
	\[
		F_{b,k}(\widetilde\phi,\widetilde\theta) = \mathcal{F}(\widetilde\phi).
	\]
\end{remark}

\begin{lemma}
	\label{lem:minFn}
	For every $b,k,\epsilon > 0$,
	$\mathcal{F}_\epsilon\colon H^1_*\to \mathbb{R}$
	admits a global minimizer,
	that we denote by $\widetilde\phi_\epsilon$.
	Moreover,
	\begin{equation}
		\label{eq:phi_e-noTouch}
		\widetilde\phi_\epsilon(x) 
		< \phi^*_{\lambda,\epsilon}(x),
		\quad\forall x \ne x_{\lambda,\epsilon}.
	\end{equation}
\end{lemma}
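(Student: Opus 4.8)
The plan is to split the statement into an existence part and a separation (``no-touch'') part, and to reduce each of them to results already established for the unshifted constraint $\phi^*_\lambda$. For existence, I would first note that minimizing $\mathcal{F}_\epsilon$ over $H^1_*$ is the same as minimizing $F_{b,k}$ over $\mathfrak{C}^*_\epsilon\times L^2$, since by the very definition of $\mathcal{F}_\epsilon$ and Corollary \ref{cor:Theta} one has $\mathcal{F}_\epsilon(\phi)=F_{b,k}(\phi,\Theta(\phi))=\min_{\theta\in L^2}F_{b,k}(\phi,\theta)$ for $\phi\in\mathfrak{C}^*_\epsilon$. The set $\mathfrak{C}^*_\epsilon$ is convex and closed with respect to the $L^\infty$ norm, hence weakly closed in $H^1$, exactly as in Proposition \ref{prop:P*existence}. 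Coercivity follows from $F_{b,k}(\phi,\theta)\ge\frac{k}{2}\|\phi'\|_{L^2}^2-b$, so a minimizing sequence is bounded in $H^1_*$ and, up to a subsequence, converges weakly in $H^1$ and uniformly on $[0,1]$, with weak limit in $\mathfrak{C}^*_\epsilon$. Weak lower semicontinuity is then inherited from Proposition \ref{th1}: writing $\mathcal{F}_\epsilon(\phi)=\int_0^1\big(\frac{k}{2}\phi'^2+\frac{\phi^2}{2}+K(x,\phi)\big)\dx$ with $K$ continuous, the gradient term is weakly l.s.c. and the remaining term is continuous under uniform convergence. This produces the minimizer $\widetilde\phi_\epsilon$, and I set $\widetilde\theta_\epsilon\coloneqq\Theta(\widetilde\phi_\epsilon)$.

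For the separation property I would exploit that $\phi^*_{\lambda,\epsilon}$ differs from $\phi^*_\lambda$ only by the additive constant $\epsilon$ on its parabolic branch, so that $(\phi^*_{\lambda,\epsilon})''=-(\lambda(1-x)+1)$ on $[0,x_{\lambda,\epsilon}]$, while $\phi^*_{\lambda,\epsilon}\equiv-\pi$ on $[x_{\lambda,\epsilon},1]$. On $[x_{\lambda,\epsilon},1]$ the constraint has exactly the form $\phi\le-\pi$ treated in Lemma \ref{lem:noTouch-after-xlambda}, so the same reflection argument (through Lemma \ref{lem:monotonicity}) gives $\widetilde\phi_\epsilon$ strictly decreasing there, hence $\widetilde\phi_\epsilon(x)<-\pi$ for $x\in(x_{\lambda,\epsilon},1]$. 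On $[0,x_{\lambda,\epsilon}]$, by Remark \ref{rem:onlySecondDeriv} the arguments of Lemma \ref{lem:regularity_Ploc_minimum}, Proposition \ref{prop:local_EulerLagrange} and Corollary \ref{cor:notTouch} use the constraint only through its second derivative and therefore transfer verbatim with $x_{\lambda,\epsilon}$ in place of $x_\lambda$: they yield the regularity $\widetilde\phi_\epsilon\in W^{2,\infty}([0,x_{\lambda,\epsilon}])$, the Euler--Lagrange identity $k\widetilde\phi_\epsilon''=\widetilde\phi_\epsilon-\widetilde\theta_\epsilon$ a.e., and the interior separation $\widetilde\phi_\epsilon(x)<\phi^*_{\lambda,\epsilon}(x)$ for $x\in(0,x_{\lambda,\epsilon})$. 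This last point again rests on the estimate $(\widetilde\phi_\epsilon-\phi^*_{\lambda,\epsilon})''\ge-\lambda(1-x)+(\lambda(1-x)+1)=1>0$ on any hypothetical contact set, which contradicts $\widetilde\phi_\epsilon\le\phi^*_{\lambda,\epsilon}$ exactly as in Corollary \ref{cor:notTouch}.

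It then remains only to rule out a contact at $x=0$, and this is precisely where the shift by $\epsilon$ does its job: since $\phi^*_{\lambda,\epsilon}(0)=\epsilon>0=\widetilde\phi_\epsilon(0)$, the constraint is strictly inactive at the origin, so the single contact point at $0$ that could not be excluded in Corollary \ref{cor:notTouch} simply disappears. Collecting the three regimes yields $\widetilde\phi_\epsilon(x)<\phi^*_{\lambda,\epsilon}(x)$ for every $x\ne x_{\lambda,\epsilon}$, which is the claim. I expect the only genuinely delicate point to be checking that the transferred contact argument never secretly relies on a contact at $0$: in Corollary \ref{cor:notTouch} the matching of first derivatives at a contact point is invoked for \emph{interior} contacts only, so eliminating $x=0$ by the strict inequality above is legitimate, and the proof closes.
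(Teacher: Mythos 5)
Your proof is correct and follows essentially the same route as the paper: existence via the weak closedness of $\mathfrak{C}^*_\epsilon$ as in Proposition \ref{prop:P*existence}, and the separation property by transferring the arguments of Corollary \ref{cor:notTouch} through Remark \ref{rem:onlySecondDeriv}, since $\phi^*_{\lambda,\epsilon}$ has the same second derivative as $\phi^*_\lambda$ on its parabolic branch. You additionally make explicit the one point the paper leaves implicit, namely that the contact at $x=0$ disappears because $\phi^*_{\lambda,\epsilon}(0)=\epsilon>0=\widetilde\phi_\epsilon(0)$, which is exactly why the conclusion here excludes only $x_{\lambda,\epsilon}$ rather than both $0$ and $x_{\lambda,\epsilon}$.
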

\begin{proof}
	Since $ \mathfrak{C}_{\epsilon}^* \subset H^1_*$
	is closed with respect to the $L^\infty$ norm,
	the existence of a global minimizer
	can be obtained following the same proof of 
	Proposition \ref{prop:P*existence}.
	Moreover, as previously observed in Remark \ref{rem:onlySecondDeriv},
	the proof of Corollary \ref{cor:notTouch}
	relies only on the second derivative of $\phi^*$,
	hence the analogous result given in
	\eqref{eq:phi_e-noTouch}
	holds when we substitute $\phi^*$ with $\phi^*_\epsilon$.
\end{proof}

\begin{lemma}
	\label{gammaconv}
	Let $(\epsilon_n)_n\subset \mathbb{R}$ be a strictly decreasing
	sequence such that
	$\epsilon_n > 0$ for each $n$ and $\epsilon_n \to 0$.
	Then the sequence of functionals $\mathcal{F}_{\epsilon_n}$
	$\Gamma$-converges to the functional $\mathcal{F}$.
	Therefore,
	if $(\phi_{\epsilon_n})_n\subset H^1_*$
	is a sequence of absolute minimizers of $\mathcal{F}_{\epsilon_n}$,
	it converges in $L^\infty$-norm to a minimizer of $\mathcal{F}$.
\end{lemma}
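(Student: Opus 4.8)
The plan is to establish the two $\Gamma$-convergence inequalities with respect to uniform ($L^\infty$) convergence on $H^1_*$, and then to combine them with an equicoercivity estimate and the fundamental theorem of $\Gamma$-convergence to obtain the stated convergence of minimizers. The starting observation is that the constraint functions are monotone increasing in $\epsilon$, with $0 \le \phi^*_{\lambda,\epsilon}(x) - \phi^*_\lambda(x) \le \epsilon$ for every $x$; hence the sets $\mathfrak{C}^*_{\epsilon_n}$ are nested decreasing, $\bigcap_n \mathfrak{C}^*_{\epsilon_n} = \mathfrak{C}^*$, and $\phi^*_{\lambda,\epsilon_n}\to\phi^*_\lambda$ uniformly. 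I would also record that, by Corollary \ref{cor:Theta}, for every $\phi \in H^1_*$ one has $F_{b,k}(\phi,\Theta(\phi)) = \int_0^1 \tfrac{k}{2}\phi'^2\,\dx + \int_0^1 \widetilde K(x,\phi)\,\dx$, where $\widetilde K(x,s) = \min_\theta\big(\tfrac{(s-\theta)^2}{2} - b(1-x)\sin\theta\big)$ is continuous and bounded on $[0,1]\times[-R,R]$ for every $R$. Thus the lower-order term is continuous under uniform convergence of $\phi$ ranging in a fixed compact set, whereas $\int_0^1\tfrac{k}{2}\phi'^2\,\dx$ is only weakly $H^1$ lower semicontinuous.

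The $\limsup$ (recovery) inequality is immediate from the nesting. If $\phi \notin \mathfrak{C}^*$ then $\mathcal{F}(\phi) = +\infty$ and nothing is to be proved; if $\phi \in \mathfrak{C}^*$, then $\phi \in \mathfrak{C}^*_{\epsilon_n}$ for every $n$, so the constant sequence $\phi_n \equiv \phi$ satisfies $\mathcal{F}_{\epsilon_n}(\phi_n) = F_{b,k}(\phi,\Theta(\phi)) = \mathcal{F}(\phi)$ and is a recovery sequence.

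The $\liminf$ inequality is the delicate point, precisely because the Dirichlet term fails to be continuous for $L^\infty$ convergence; this is the main obstacle, and I would circumvent it by an energy-bound argument. Let $\phi_n \to \phi$ uniformly; we may assume $\liminf_n \mathcal{F}_{\epsilon_n}(\phi_n) < +\infty$ and pass to a subsequence realizing the $\liminf$ with finite values, so that $\phi_n \in \mathfrak{C}^*_{\epsilon_n}$ and $\mathcal{F}_{\epsilon_n}(\phi_n) = F_{b,k}(\phi_n,\Theta(\phi_n))$ stays bounded. Since $\phi_n$ converges uniformly it lies in a fixed $L^\infty$-ball, so the lower-order term is bounded, and the energy bound then forces $\sup_n \int_0^1 \tfrac{k}{2}\phi_n'^2\,\dx < \infty$, i.e. $(\phi_n)$ is bounded in $H^1_*$. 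A further subsequence converges weakly in $H^1_*$, necessarily to $\phi$ (the weak $H^1$ limit must agree with the uniform limit). Passing to the limit in $\phi_n \le \phi^*_{\lambda,\epsilon_n}$ and using $\phi^*_{\lambda,\epsilon_n}\to\phi^*_\lambda$ uniformly gives $\phi \le \phi^*_\lambda$, whence $\phi \in \mathfrak{C}^*$ and $\mathcal{F}(\phi) = F_{b,k}(\phi,\Theta(\phi))$. Finally, weak $H^1$ lower semicontinuity of the convex term $\int \tfrac{k}{2}\phi'^2\,\dx$ together with the uniform-convergence continuity of $\int \widetilde K(x,\cdot)\,\dx$ yields $\liminf_n \mathcal{F}_{\epsilon_n}(\phi_n) \ge F_{b,k}(\phi,\Theta(\phi)) = \mathcal{F}(\phi)$; in the excluded case $\phi \notin \mathfrak{C}^*$ the same computation shows the $\liminf$ is forced to be $+\infty$, again matching $\mathcal{F}(\phi)$. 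This establishes $\mathcal{F}_{\epsilon_n}\xrightarrow{\Gamma}\mathcal{F}$.

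It then remains to deduce convergence of minimizers, for which I would verify equicoercivity in $L^\infty$. Fixing the global minimizer $\widetilde\phi$ of $F_{b,k}$ in $\mathfrak{S}^*_\lambda$ (Proposition \ref{prop:P*existence}), which belongs to $\mathfrak{C}^* \subseteq \mathfrak{C}^*_{\epsilon_n}$, as a competitor gives $\mathcal{F}_{\epsilon_n}(\widetilde\phi_{\epsilon_n}) \le \mathcal{F}_{\epsilon_n}(\widetilde\phi) = \mathcal{F}(\widetilde\phi)$, a bound independent of $n$. Arguing as in Lemma \ref{lem:tildephi_ge_32pi} one obtains $\widetilde\phi_{\epsilon_n} \ge -\tfrac{3}{2}\pi$, and since $\widetilde\phi_{\epsilon_n} \le \phi^*_{\lambda,\epsilon_1}$ by the nesting, the minimizers $\widetilde\phi_{\epsilon_n}$ (whose existence is guaranteed by Lemma \ref{lem:minFn}) are uniformly bounded in $L^\infty$; the uniform energy bound then gives a uniform $H^1_*$ bound, so by Rellich--Kondrachov $(\widetilde\phi_{\epsilon_n})$ is precompact in $C^0([0,1]) = L^\infty$. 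By the fundamental theorem of $\Gamma$-convergence, every $L^\infty$ cluster point of $(\widetilde\phi_{\epsilon_n})$ minimizes $\mathcal{F}$; hence a subsequence (and, if the minimizer of $\mathcal{F}$ is unique, the whole sequence) converges in $L^\infty$-norm to a minimizer of $\mathcal{F}$.
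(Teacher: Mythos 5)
Your proof is correct, but it takes a genuinely different route from the paper's. The paper exploits the monotonicity of the construction: since $(\epsilon_n)$ is decreasing the constraint sets $\mathfrak{C}^*_{\epsilon_n}$ are nested decreasing, so the functionals $\mathcal{F}_{\epsilon_n}$ form a pointwise non-decreasing sequence, and the $\Gamma$-limit is then identified in one line via the standard characterization $\Gamma\text{-}\lim_n \mathcal{F}_{\epsilon_n}=\sup_n \mathrm{sc}(\mathcal{F}_{\epsilon_n})=\lim_n\mathcal{F}_{\epsilon_n}=\mathcal{F}$ (Remark 1.40 in Braides), using that each $\mathcal{F}_{\epsilon_n}$ is already weakly lower semicontinuous; the convergence of minimizers is then dispatched by citing the fundamental theorem of $\Gamma$-convergence. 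You instead verify the $\liminf$ and $\limsup$ inequalities by hand in the $L^\infty$ topology, using the decomposition $F_{b,k}(\phi,\Theta(\phi))=\int_0^1\frac{k}{2}\phi'^2\,\mathrm{d}x+\int_0^1\widetilde K(x,\phi)\,\mathrm{d}x$ with $\widetilde K$ continuous, weak $H^1$ lower semicontinuity of the Dirichlet term, and stability of the constraint under the limit $\phi^*_{\lambda,\epsilon_n}\to\phi^*_\lambda$. Your version is longer but more self-contained on two points the paper leaves implicit: it fixes the topology explicitly, and it actually verifies the equicoercivity (uniform energy bound for the minimizers via the competitor $\widetilde\phi$, hence a uniform $H^1_*$ bound and $C^0$-precompactness) that is a genuine hypothesis of the fundamental theorem the paper cites. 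You are also right to note that, absent uniqueness of the minimizer of $\mathcal{F}$, one only obtains subsequential $L^\infty$ convergence; this is all that is needed in the proof of Theorem \ref{teo:local-min}. A minor simplification available to you: since $\widetilde K(x,s)\ge -b(1-x)\ge -b$ for all $(x,s)$, the uniform $H^1$ bound for the minimizers follows directly from the energy bound without invoking the analogue of Lemma \ref{lem:tildephi_ge_32pi}.
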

\begin{proof}
	Since $\epsilon_n$ is strictly decreasing,
	for every $n \in \mathbb{N}$
	we have
	$\mathfrak{C}^*_{\lambda}\subset \mathfrak{C}^{*}_{\lambda,\epsilon_{n+1}}
	\subset \mathfrak{C}^{*}_{\epsilon_n}$.
	Therefore, 
	the sequence $\mathcal{F}_n$ is pointwise non-decreasing,
	so that
	(see e.g. Remark 1.40 in \cite{braides})
	\[
		\Gamma\text{-}\lim_n \mathcal{F}_n
		=\sup_n \text{sc}(\mathcal{F}_n)
		=\lim_n \text{sc}(\mathcal{F}_n)
		=\lim_n\mathcal{F}_n
		=\mathcal{F},
	\]
	where $\text{sc}(\cdot)$ indicates the lower-semicontinuous
	envelope and the penultimate equality holds because $\mathcal{F}_n$
	is weakly lower-semicontinuous for every $n$. 
	The second part of the statement follows from the basic properties of $\Gamma$-convergence
	(see Theorem 1.21 in \cite{braides}).
\end{proof}

\noindent We are finally ready to prove our main result.
\begin{proof}[\textbf{Proof of Theorem \ref{teo:local-min}}]
	By Lemma \ref{lemma_N},
	if $b$ is sufficiently small
	and $\lambda$ is sufficiently large,
	any minimizer of $F_{b,k}$ in $\mathfrak{S}^*_{\lambda}$,
	denoted by 
	$(\widetilde\phi,\widetilde\theta)$,
	is such that $\widetilde\phi(x) < \phi^*_\lambda(x) < 0$ for every
	$x \ne 0$.
	To prove Theorem \ref{teo:local-min},
	we need to prove that 
	$(\widetilde\phi,\widetilde\theta)$ is a local minimizer
	in the whole set $\mathfrak{S} = H^1_*\times L^2$.
	It is important to notice that we can choose $\lambda$
	large enough such that
	$\widetilde\phi(x_\lambda) < \phi^*_\lambda(x_\lambda) = -\pi$
	and that 
	we can assume that $b < 1$,
	so that Corollary \ref{cor:Theta} 
	can be applied to define the function $\Theta$.

	\noindent Seeking a contradiction,
	fix $\alpha > 0$ and 
	let $\widetilde{\mathcal{B}}_{\alpha} \subset H^1_*$
	be the open ball with center $\widetilde\phi$ and radius $\alpha$
	with respect to the $H^{1}$ norm. 
	Then, by contradiction, there exists
	$(\phi_\alpha,\theta_\alpha) \in \widetilde{\mathcal{B}}_\alpha\times L^2$
	such that 
	\[
		F_{b,k}(\phi_\alpha,\theta_\alpha) < F_{b,k}(\widetilde\phi,\widetilde\theta).
	\]
	By definition of $\Theta$, this implies
	\[
		\mathcal{F}(\phi_\alpha) \le 
		F_{b,k}(\phi_\alpha,\theta_\alpha) 
		< F_{b,k}(\widetilde\phi,\widetilde\theta)
		= \mathcal{F}(\widetilde\phi).
	\]
	Since 
	$(\widetilde\phi,\widetilde\theta)$ is a global minimizer of $F_{b,k}$
	in $\mathfrak{S}^* = \mathfrak{C}^{*}\times L^2$,
	the previous chain of inequalities implies that $\phi_\alpha\notin \mathfrak{C}^*$.
	Let $\beta > 0$ such that 
	$\phi_\alpha \in \mathfrak{C}^*_{\beta}$.
	By Lemma \ref{lem:minFn},
	there exists a global minimizer of $\mathcal{F}_{\beta}$,
	that we denote by $\widetilde\phi_\beta$,
	and we have
	\[
		\mathcal{F}_{\beta}(\widetilde\phi_\beta)
		\le \mathcal{F}_\beta(\phi_\alpha)
		< \mathcal{F}(\widetilde\phi).
	\]
	Hence, $\widetilde\phi_\beta \in \mathfrak{C}^*_\beta\setminus\mathfrak{C}^*$
	and there exists
	$\epsilon \in ]0,\beta]$ such that
	$\widetilde\phi_\beta\in \mathfrak{C}^*_\epsilon$
	and 
	\[
		\left\{
			x \in [0,1]: \widetilde\phi_\beta(x) = \phi^*_{\lambda,\epsilon}(x)
		\right\} \ne \emptyset.
	\]
	Since $\mathfrak{C}^*_\epsilon \subseteq \mathfrak{C}^*_\beta$, we have that we can take $\widetilde\phi_\epsilon=\widetilde\phi_\beta$, that is $\widetilde\phi_\beta$ is actually a global minimizer of $\mathcal{F}_\epsilon$.
	As a consequence, applying again Lemma \ref{lem:minFn},
	we have that
	$\widetilde\phi_\epsilon(x_{\lambda,\epsilon}) 
	= \phi^*_{\lambda,\epsilon}(x_{\lambda,\epsilon}) = -\pi$.
	For the sake of presentation,
	an illustration of the above construction is given in 
	Fig.~\ref{fig:finalproof}.
	\begin{figure}[h]
		\centering
		\includegraphics[width=0.7\linewidth]{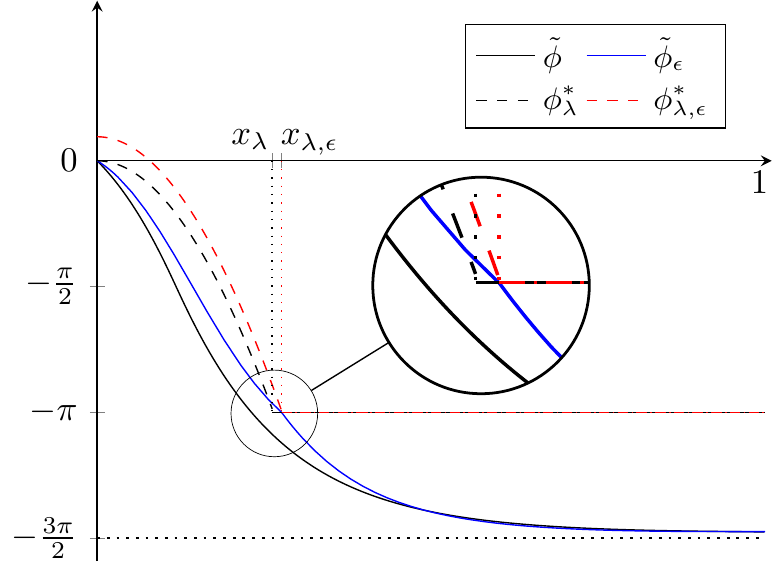} 
		\caption{
			Convergence of minimizers in the $\Gamma$--convergence
			argument for the proof of Theorem \ref{teo:local-min}.
		}
		\label{fig:finalproof}
	\end{figure}

	\noindent Now, consider a strictly decreasing sequence $(\alpha_n)_n \subset \mathbb{R}^+$
	that converges to $0$.
	Applying the previous construction,
	we can construct a strictly decreasing sequence
	$(\epsilon_n)_n \subset \mathbb{R}^+$
	that converges to $0$ and a sequence
	$(\widetilde\phi_{\epsilon_n})_n \subset  H^1_*$
	of global minimizers of $\mathcal{F}_{\epsilon_n}$
	 such that 
	\[
		\widetilde\phi_{\epsilon_n}(x_{\lambda,\epsilon_n}) = -\pi.
	\]
	By Lemma \ref{gammaconv},
	the sequence 
	$(\widetilde\phi_{\epsilon_n})_n $
	uniformly converges to 
	a minimizer $\bar\phi$ of $\mathcal{F}$.
	Since $ \lim_{n\to \infty} x_{\lambda,\epsilon_n} = x_\lambda $
	we have
	\[
		\bar\phi(x_\lambda)
		= \lim_{n\to \infty} \widetilde\phi_{\epsilon_n}(x_{\lambda,\epsilon_n})
		= - \pi.
	\]
	By Lemma \ref{lemma_N}, this is a contradiction 
	and, finally, we are done.
\end{proof}

\subsection{Regularity of local minimizers}

By Proposition \ref{prop:regularity_global_min},
we obtain the following regularity result for the local minimizers of $F_{b,k}$
such that $\phi(x) < 0$ in $(0,1]$,
whose existence is ensured by Theorem \ref{teo:local-min}.
\begin{cor}
	\label{cor:regularity_local_min}
	Let $(\widetilde{\phi},\widetilde{\theta})$ be a local minimizer of $F_{b,k}(\phi,\theta)$ 
	such that $\phi(x) < 0$ in $(0,1]$.
	If $b \le 1$,
	then both $\widetilde\phi$ and $\widetilde\theta$ are $C^{\infty}$.
\end{cor}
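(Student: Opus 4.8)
The plan is to reduce the statement to the non-degeneracy condition that drives the regularity bootstrap in the proof of Proposition \ref{prop:regularity_global_min}. As in that proof, the vanishing of the first variation of $F_{b,k}$ in the $\theta$-direction (which is legitimate here, since $\theta$ ranges freely in $L^2$ and $\theta \mapsto F_{b,k}(\phi,\theta)$ is G\^ateaux differentiable, $\sin$ being bounded and Lipschitz) forces the pointwise relation
\[
	\widetilde\phi(x) = f(x,\widetilde\theta(x)) := \widetilde\theta(x) - b(1-x)\cos\widetilde\theta(x)
	\quad \text{a.e. on }[0,1],
\]
exactly as in \eqref{eq:algebraic_theta}. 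Everything then hinges on inverting $\theta \mapsto f(x,\theta)$, whose derivative is $\partial_\theta f = 1 - \partial^2_\theta V = 1 + b(1-x)\sin\theta$, cf. \eqref{deriv_}. Once I show that this derivative does not vanish along the graph of the minimizer, the real-analytic inverse function theorem produces an analytic $g$ with $\widetilde\theta = g(x,\widetilde\phi)$, and the alternating use of $\widetilde\theta = g(x,\widetilde\phi)$ and of the Euler--Lagrange equation $k\widetilde\phi'' = \widetilde\phi - \widetilde\theta$ in \eqref{eq:glob_phi} bootstraps both functions to $C^\infty$, verbatim as in Proposition \ref{prop:regularity_global_min}.

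Since $\partial^2_\theta V(x,\theta) = -b(1-x)\sin\theta \le b(1-x) \le b \le 1$, the offending equality $\partial^2_\theta V = 1$ can occur only if simultaneously $b=1$, $x=0$ and $\sin\theta = -1$; equivalently, $\partial_\theta f$ vanishes only when $b=1$ and $(x,\theta)$ satisfies $x=0$, $\sin\theta=-1$. Hence, if $b<1$, the condition $\partial^2_\theta V \ne 1$ holds on all of $[0,1]\times\mathbb{R}$ and the conclusion is immediate from Corollary \ref{cor:regul_loc_min} (with $V$ real-analytic, so in fact $\widetilde\phi,\widetilde\theta \in C^\omega$). Moreover, for $x\in(0,1]$ we have $b(1-x) < 1$ for every $b\le 1$, so $\partial_\theta f(x,\cdot) > 0$ there; thus for all $b\le 1$ the only possible degeneracy on the whole strip lies over the clamped endpoint $x=0$.

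It remains to handle the fibre $x=0$ when $b=1$, which is the crux of the argument. Because $\partial_\theta f(x,\cdot) \ge 0$ with isolated zeros, $f(x,\cdot)$ is for each fixed $x$ a strictly increasing homeomorphism of $\mathbb{R}$, so $g(x,\cdot) := f(x,\cdot)^{-1}$ is well defined and jointly continuous (this is the content of Corollary \ref{cor:Theta}, whose proof extends to $b=1$ at the level of continuity). Consequently $\widetilde\theta$ admits the continuous representative $x \mapsto g(x,\widetilde\phi(x))$, and evaluating the relation $f(0,\widetilde\theta(0)) = \widetilde\phi(0) = 0$ at the endpoint gives $\widetilde\theta(0) = b\cos\widetilde\theta(0)$, whence $|\widetilde\theta(0)| \le b \le 1 < \tfrac\pi2$; in particular $\sin\widetilde\theta(0) > -1$ and $\partial_\theta f(0,\widetilde\theta(0)) = 1 + \sin\widetilde\theta(0) > 0$. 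Therefore $\partial_\theta f$ is nonzero on the compact graph $\{(x,\widetilde\theta(x)) : x \in [0,1]\}$, hence on a neighbourhood of it, and the inverse-function-theorem bootstrap described above applies unchanged to yield $\widetilde\phi,\widetilde\theta \in C^\infty$. I expect the only delicate point to be this continuity-of-$\widetilde\theta$ step together with the verification that the single degenerate point (with $x=0$, $\sin\theta=-1$) never lies on the minimizer's graph; the hypothesis $\widetilde\phi < 0$ on $(0,1]$ serves to single out the non-global minimizers of Theorem \ref{teo:local-min}, to which the statement is addressed.
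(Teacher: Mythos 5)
Your argument is correct and follows the route the paper intends: reduce the statement to the non-degeneracy condition \eqref{deriv_} along the minimizer and then run the inverse-function/Euler--Lagrange bootstrap of Proposition \ref{prop:regularity_global_min} in the local-minimizer form of Corollary \ref{cor:regul_loc_min}. In fact your write-up is more complete than the paper's one-line derivation. The paper obtains the corollary directly from Corollary \ref{cor:regul_loc_min}, whose blanket hypothesis $\partial^2_\theta V(x,\theta)\ne 1$ on all of $[0,1]\times\mathbb{R}$ holds for $V=b(1-x)\sin\theta$ only when $b<1$; at the endpoint $b=1$ it fails precisely on the fibre $x=0$, $\sin\theta=-1$, so the stated range $b\le 1$ is not literally covered by that corollary. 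Your patch --- using the algebraic relation \eqref{eq:algebraic_theta} together with the strict monotonicity and continuous global invertibility of $f(x,\cdot)$ (valid even where $\partial_\theta f$ has isolated zeros) to produce a continuous representative of $\widetilde\theta$, and then observing that $\widetilde\theta(0)=b\cos\widetilde\theta(0)$ forces $|\widetilde\theta(0)|\le b<\pi/2$ so that the single degenerate point never lies on the compact graph of the minimizer --- is exactly what is needed to extend the bootstrap to $b=1$, and the remainder of your argument is verbatim that of Proposition \ref{prop:regularity_global_min}. Your closing remark is also accurate: the hypothesis $\widetilde\phi<0$ on $(0,1]$ plays no role in the regularity proof and merely identifies the minimizers of Theorem \ref{teo:local-min} to which the corollary is meant to apply.
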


\noindent A complementary result can be established assuming a lower bound on $b$
(depending on $x$).
This is obtained in Proposition \ref{thetajumps},
after we establish a lemma showing that $|\widetilde{\phi}-\widetilde{\theta}|$
cannot exceed $\pi$. Proposition \ref{thetajumps} entails that, in general,
there is no hope to be able to ``neglect''
the asymmetric nature of the variational problem,
since the less regular component ($\theta$)
of the local minimizers can actually live in $L^2\setminus H^1$. 

\begin{lemma}
	Let $(\widetilde{\phi},\widetilde{\theta})$ be a local minimizer of $F_{b,k}(\phi,\theta)$.  Let $\left[(2n-1)\frac{\pi}{2},(2n+1)\frac{\pi}{2}\right]:=I_n$ for $ (n\in \mathbb{Z})$. If $\widetilde{\phi}(\bar{x})\in (I_n)^{\circ}$ then $\widetilde{\theta}(x)\in I_n$ a.e. on a neighborhood of $\bar{x}$.
	\label{theta_follows}
\end{lemma}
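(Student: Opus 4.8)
The plan is to exploit the algebraic relation between $\widetilde\phi$ and $\widetilde\theta$ that comes from minimization in the $\theta$-variable. Recall from the proof of Lemma \ref{lem:algebraic_theta} (and more generally from \eqref{_theta_}) that, since $(\widetilde\phi,\widetilde\theta)$ is a minimizer, for a.e. $x$ the value $\widetilde\theta(x)$ minimizes $\theta \mapsto H(x,\widetilde\phi(x),\theta) = -\widetilde\phi(x)\theta + \theta^2/2 - V(x,\theta)$, so it satisfies the stationarity condition $\widetilde\phi(x) = \widetilde\theta(x) - \partial_\theta V(x,\widetilde\theta(x))$. With $V(x,\theta) = b(1-x)\sin\theta$ this reads $\widetilde\phi(x) = \widetilde\theta(x) - b(1-x)\cos\widetilde\theta(x)$, i.e. $\widetilde\phi(x) - \widetilde\theta(x) = -b(1-x)\cos\widetilde\theta(x)$. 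The key geometric observation is that the intervals $I_n = [(2n-1)\tfrac\pi2,(2n+1)\tfrac\pi2]$ are precisely the intervals of monotonicity of $\cos\theta$, and their endpoints are where $\cos$ vanishes; I would use this to control where $\widetilde\theta$ can sit relative to $\widetilde\phi$.

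First I would make the connection between the location of $\widetilde\phi$ and that of $\widetilde\theta$ quantitative. From $\widetilde\phi - \widetilde\theta = -b(1-x)\cos\widetilde\theta$ we have $|\widetilde\phi(x) - \widetilde\theta(x)| = b(1-x)|\cos\widetilde\theta(x)| \le b(1-x) \le b$. So the distance between $\widetilde\phi$ and $\widetilde\theta$ is pointwise bounded by $b(1-x)$. However, this small-distance bound is not quite what is needed if $b$ is large; the statement is meant to hold for general local minimizers, and the forthcoming Proposition \ref{thetajumps} explicitly wants $b$ large. The sharper structural fact is that $\widetilde\theta(x)$ must be a \emph{global} minimizer of $H(x,\widetilde\phi(x),\cdot)$, not merely a critical point. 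I would therefore argue as follows: suppose $\widetilde\phi(\bar x)$ lies in the open interval $(I_n)^\circ = ((2n-1)\tfrac\pi2,(2n+1)\tfrac\pi2)$. By continuity of $\widetilde\phi$ (which holds since $\widetilde\phi \in H^1_* \subset C^0$), there is a neighborhood $U$ of $\bar x$ on which $\widetilde\phi$ still takes values in $(I_n)^\circ$. The goal is to show that on $U$ the minimizing value $\widetilde\theta(x)$ also lands in $I_n$.

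The main step, and the heart of the argument, is to compare the candidate minimizing values of $\theta$ lying in $I_n$ with those lying in other intervals $I_m$, $m \ne n$. Fix $x \in U$ and write $p = \widetilde\phi(x) \in (I_n)^\circ$. The function $\theta \mapsto H(x,p,\theta) = -p\theta + \theta^2/2 - b(1-x)\sin\theta$ is to be minimized. I would show that any global minimizer $\theta^*$ satisfies $|\theta^* - p| \le b(1-x)$, which follows from the stationarity relation $\theta^* - p = b(1-x)\cos\theta^*$ together with $|\cos\theta^*| \le 1$; the subtlety is that there could be several critical points, so one must verify that the global minimizer is the one closest to $p$. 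For this I would use that the quadratic part $\theta^2/2 - p\theta$ dominates: the competing critical points in distant intervals $I_m$ have strictly larger quadratic cost $\tfrac12(\theta-p)^2$, and since the oscillatory term $b(1-x)\sin\theta$ is bounded by $b$, once $|\theta - p|$ exceeds roughly $\sqrt{2b}$ the quadratic penalty outweighs any gain from $-V$, so the global minimizer lies within distance $\sqrt{2b}$ of $p$. The endpoints of $I_n$ are at distance at least the gap between $p$ and $(2n\pm1)\tfrac\pi2$; choosing the statement to be local (a.e. on a \emph{neighborhood} of $\bar x$) lets me absorb the constants, since near $\bar x$ the point $p$ stays bounded away from $\partial I_n$ by a fixed amount. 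The hard part will be handling the case where $p$ is close to an endpoint of $I_n$, i.e. near an odd multiple of $\pi/2$: there the minimizer could genuinely jump from $I_n$ to the adjacent interval, which is exactly the jump phenomenon that motivates the subsequent Proposition \ref{thetajumps}. This is why the hypothesis $\widetilde\phi(\bar x) \in (I_n)^\circ$ (open interval) is essential — it keeps $p$ strictly interior, so a neighborhood $U$ keeps $p$ uniformly away from the danger points, and the global minimizer is trapped in $I_n$ on all of $U$, giving $\widetilde\theta(x) \in I_n$ a.e. on $U$ as claimed.
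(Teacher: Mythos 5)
Your first step---that a.e.\ $\widetilde\theta(x)$ must be a global minimizer of $\theta\mapsto\frac12(\widetilde\phi(x)-\theta)^2-b(1-x)\sin\theta$, together with continuity of $\widetilde\phi$ to get a neighborhood on which $\widetilde\phi$ stays in $(I_n)^\circ$---is exactly how the paper begins. The gap is in your main step. Both quantitative bounds you propose for the global minimizer $\theta^*$, namely the stationarity bound $|\theta^*-p|\le b(1-x)$ and the energy-comparison bound of order $\sqrt{b(1-x)}$ (obtained by comparing with the competitor $\theta=p$), degenerate as $b$ grows, whereas the lemma carries no smallness assumption on $b$. Indeed it is invoked in Proposition \ref{thetajumps} precisely under the hypothesis $b>\frac{1}{1-x}$, in which case $2\sqrt{b(1-x)}>2>\pi/2$ already exceeds the distance from $p$ to $\partial I_n$ even when $p$ is the midpoint of $I_n$. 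So for large $b$ your estimates do not confine $\theta^*$ to $I_n$, and the difficulty is not, as you suggest at the end, only the case where $p$ is near an endpoint of $I_n$: it occurs for every $p\in(I_n)^\circ$ once $b$ is moderately large.

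The missing idea is a symmetrization rather than an estimate. The function $\sin$ is even about every odd multiple of $\pi/2$, i.e.\ $\sin\bigl((2n\pm1)\tfrac\pi2+t\bigr)=\sin\bigl((2n\pm1)\tfrac\pi2-t\bigr)$, and these points are exactly the common endpoints of consecutive intervals $I_m$. Hence if $\theta^*$ lay in an interval adjacent to $I_n$, reflecting it across the shared endpoint would leave the term $b(1-x)\sin\theta$ unchanged while strictly decreasing $\frac12(\widetilde\phi(x)-\theta)^2$, since $\widetilde\phi(x)$ lies strictly on the $I_n$-side of the reflection center; candidates in farther intervals are handled by iterating the reflection, with a strict decrease of the functional at each step. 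This contradicts the global minimality of $\theta^*$ in the $\theta$-variable and proves the claim for every $b>0$. This reflection argument is the paper's proof; your route, as written, would only establish the lemma for $b$ small.
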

\begin{proof}
	First of all notice that, for every measurable subset $M\subset [0,1]$, we have that $(\widetilde{\phi},\widetilde{\theta})$ can be a (global or local) minimizer for $F_{b,k}$ only if 
	\begin{equation}
		\label{condition_theta}
		\widetilde{\theta}=\min_{\theta} \left(\frac{(\widetilde{\phi}-\theta)^2}{2}-b(1-x)\sin{\theta}\right)
	\end{equation} 
	a.e. on $M$.
	Since the function $\sin(x\pm (2n\pm 1)\frac \pi2)$ is an even function, it follows that, if $\widetilde{\phi}(\bar{x})\in (I_n)^{\circ}$, the minimization of the term $\frac{(\phi-\theta)^2}{2}$ implies $\widetilde{\theta}(x)\in I_n$ a.e. on a neighborhood of $\bar{x}$. Supposing indeed $\widetilde{\theta}(x)\in I_{n\pm 1}$, one can replace $\widetilde{\theta}(x)$ by the symmetric value with respect to $(2n-1)\frac \pi2$ (if $\widetilde{\phi}$ belongs to the left half of $I_n$) or with respect to $(2n+1)\frac \pi 2$ (if $\widetilde{\phi}$ belongs to the right half of $I_n$), obtaining the same value for the term $b(1-x)\sin{\theta}$ and a strictly smaller value for the term $\frac{(\widetilde{\phi}-\theta)^2}{2}$.
\end{proof}
\begin{prop}
	\label{thetajumps}
	Suppose that there exists a local minimizer $(\widetilde{\phi} , \widetilde{\theta})$ of $F(\phi,\theta)$ in $\mathfrak{S}$ such that $\widetilde{\phi}(x)<-\frac{\pi}{2}$ for some $x\in (0,1)$ such that $b>\frac{1}{1-x}$.
	Then $\widetilde{\theta}\notin C^0$.  
\end{prop}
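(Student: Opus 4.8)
The plan is to exploit the pointwise optimality of the less regular component $\widetilde\theta$ together with the non-convexity triggered by the hypothesis $b>1/(1-x)$. First I would record that, exactly as in the pointwise minimization used in the proof of Lemma \ref{theta_follows}, for a local minimizer we have, for almost every $x$, that $\widetilde\theta(x)$ is a global minimizer of
\[
	g_x(\theta)=\frac{(\widetilde\phi(x)-\theta)^2}{2}-b(1-x)\sin\theta .
\]
Since $g_x$ is smooth, every such minimizer satisfies the second order condition $g_x''(\widetilde\theta(x))=1+b(1-x)\sin\widetilde\theta(x)\ge 0$, that is $\sin\widetilde\theta(x)\ge -1/(b(1-x))$ for a.e. $x$. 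Let $x_0\in(0,1)$ be the point provided by the hypothesis, so that $\widetilde\phi(x_0)<-\pi/2$ and $c_0\coloneqq b(1-x_0)>1$. On the whole interval $[0,x_0]$ we have $b(1-x)\ge c_0>1$, hence $\sin\widetilde\theta(x)\ge -1/c_0$ a.e.\ there, which means that $\widetilde\theta(x)$ avoids the \emph{fixed} open neighborhood $(-\pi/2-\eta_0,-\pi/2+\eta_0)$ of $-\pi/2$, with $\eta_0\coloneqq\arccos(1/c_0)>0$, for a.e.\ $x\in[0,x_0]$.

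Next I would use Lemma \ref{theta_follows} to locate $\widetilde\theta$ on the two sides of $-\pi/2$ near the endpoints of $[0,x_0]$. Since $\widetilde\phi(0)=0$ and $\widetilde\phi$ is continuous, we have $\widetilde\phi\in(-\pi/2,\pi/2)=(I_0)^{\circ}$ on a neighborhood of $0$, and Lemma \ref{theta_follows} gives $\widetilde\theta\in I_0$, hence $\widetilde\theta\ge-\pi/2$, a.e.\ there. On the other hand, by continuity of $\widetilde\phi$ between $\widetilde\phi(0)=0$ and $\widetilde\phi(x_0)<-\pi/2$, there is a point $x_2\in(0,x_0]$ with $\widetilde\phi(x_2)\in(-3\pi/2,-\pi/2)=(I_{-1})^{\circ}$; again by Lemma \ref{theta_follows}, $\widetilde\theta\in I_{-1}$, hence $\widetilde\theta\le-\pi/2$, a.e.\ on a neighborhood of $x_2$. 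Combining each one-sided bound with the avoidance of the gap $(-\pi/2-\eta_0,-\pi/2+\eta_0)$ yields $\widetilde\theta\ge-\pi/2+\eta_0$ a.e.\ near $0$ and $\widetilde\theta\le-\pi/2-\eta_0$ a.e.\ near $x_2$, so in particular there are points $x_1$ (close to $0$) and $x_2$ in $[0,x_0]$ realizing these two bounds.

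Finally I would argue by contradiction. If $\widetilde\theta\in C^0$, then the a.e.\ avoidance of the open gap upgrades to an everywhere statement on $[0,x_0]$: if $\widetilde\theta(\bar x)$ were in $(-\pi/2-\eta_0,-\pi/2+\eta_0)$ for some $\bar x$, continuity would keep $\widetilde\theta$ in this open set on a whole neighborhood of $\bar x$, contradicting the a.e.\ bound. Thus $\widetilde\theta([x_1,x_2])$ is disjoint from $(-\pi/2-\eta_0,-\pi/2+\eta_0)$, yet it contains a value $\ge-\pi/2+\eta_0$ and a value $\le-\pi/2-\eta_0$; being the continuous image of a connected set it is an interval, so it must contain $-\pi/2$, which lies in the forbidden gap. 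This contradiction shows $\widetilde\theta\notin C^0$. The main obstacle is precisely this passage from almost-everywhere to topological information: the second order condition alone only forbids $\widetilde\theta=-\pi/2$ on a full-measure set, which is compatible with a continuous $\widetilde\theta$ crossing $-\pi/2$ on a null set. What rescues the argument is that on the compact interval $[0,x_0]$ the excluded neighborhood of $-\pi/2$ has a \emph{uniform} width $\eta_0$; this fixed gap, together with the side-determination supplied by Lemma \ref{theta_follows}, converts continuity of $\widetilde\theta$ into a violation of connectedness of its image.
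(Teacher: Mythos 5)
Your proof is correct and follows essentially the same route as the paper's: both rest on the pointwise optimality of $\widetilde\theta$ (Lemma \ref{theta_follows} together with the second-order condition $1+b(1-x)\sin\widetilde\theta\ge 0$) to place $\widetilde\theta$ on opposite sides of $-\frac{\pi}{2}$ while forbidding values at or near $-\frac{\pi}{2}$, contradicting continuity. Your quantitative refinement --- excluding the uniform gap $\left(-\frac{\pi}{2}-\eta_0,-\frac{\pi}{2}+\eta_0\right)$ on all of $[0,x_0]$ rather than only the single value $-\frac{\pi}{2}$ at the crossing point --- handles the almost-everywhere-versus-pointwise issue more carefully than the paper's own argument, but it is a polish of the same idea rather than a different approach.
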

\begin{proof}

	The function
	$\widetilde{\theta}$ must solve the localized problem 
	\[
		\inf_{\theta}\int_{S_{0}}
		\left[\frac{k}{2}(\widetilde{\phi}')^2
		+ \frac{(\widetilde{\phi}-\theta)^2}{2}-b(1-x)\sin\theta \right]\text{d}x,
	\]
	where $S_{0}$ is any maximal sub-interval of $[0,1]$ such that
	$\widetilde{\phi}(x)\in (I_{n})^{\circ}$ for $x\in S_{0}$.
	Therefore $\widetilde{\theta}(x)\in I_{0}$ a.e. in $S_{0}$.

	\noindent On the other hand, $\widetilde{\theta}$ also solves the localized problem
	\[
		\inf_{\theta}\int_{S_{-1}}
		\left[
			\frac{k}{2}(\widetilde{\phi}')^2
			+ \frac{(\widetilde{\phi}-\theta)^2}{2}-b(1-x)\sin\theta
		\right]dx
	\]
	where is any maximal sub-interval of $[0,1]$
	such that $\widetilde{\phi}(x)\in (I_{-1})^{\circ}$ for $x\in S_{-1}$.
	Therefore $\widetilde\theta(x)\in I_{-1}$ a.e. on $ S_{-1}$.

	\noindent Since $\widetilde{\phi}(0)=0$ and $\widetilde{\phi}(x)<-\frac{\pi}{2}$
	for some $x\in(0,1)$,
	there exist two nonempty such intervals $S_{0}$ and $S_{-1}$.
	The continuity of $\widetilde{\phi}$ implies that $\widetilde{\theta}$
	can be continuous only if $\widetilde{\theta}(x)=-\frac{\pi}{2}$
	at those $x$ such that $\widetilde{\phi}(x)=-\frac{\pi}{2}$.
	However we have:
	\begin{align*}
		&\frac{\partial}{\partial \theta}
		\left(\frac{\theta^2}{2}-\phi\theta-b(1-x)\sin{\theta}\right)
		\bigg|_{\theta=\phi=-\frac{\pi}{2}}=0,\\
		&\frac{\partial^2 }{\partial \theta^2} 
		\left(\frac{\theta^2}{2}-\phi\theta-b(1-x)\sin{\theta}\right)
		\bigg|_{\theta=-\frac{\pi}{2}}= 1-b(1-x)<0,
	\end{align*}
	and therefore $\widetilde{\theta}$ cannot verify \eqref{condition_theta}.
	This contradiction implies that $\widetilde\theta(x)\ne -\frac{\pi}{2}$,
	so $\widetilde{\theta}\notin C^0$.
\end{proof}

\section{Further questions}

The results achieved in this paper open some new questions.
The limit processes employed during the proof of Theorem \ref{teo:local-min}
do not allow to estimate the lower bound for $\lambda$ and the upper bound for
$b$ that ensure the existence of
a local minimizer $(\widetilde\phi,\widetilde\theta)$ of $F_{b,k}$
such that $\widetilde\phi(x) \le 0$.
By Corollary \ref{cor:notTouch} and Proposition \ref{prop:convergence},
the upper bound for $b$ depends on $\lambda$: in particular,
it depends on the distance between $\widetilde\phi_\lambda$,
namely the minimizer of $F_\lambda$ in $\mathfrak{C}^*_\lambda$,
and $-\pi$ at $x_\lambda$.
Even if giving such estimations is still an open problem,
some numerical simulations conducted by the authors
suggest that if $\lambda$ is greater than $42$, then 
the minimizer of $F_\lambda$ does not ``touch" $\phi^*_\lambda$,
as it can be seen in Fig.~\ref{fig:42notouch}.
\begin{figure}[h]
	\begin{subfigure}{0.48\textwidth}
		\includegraphics[width = 0.9\textwidth]{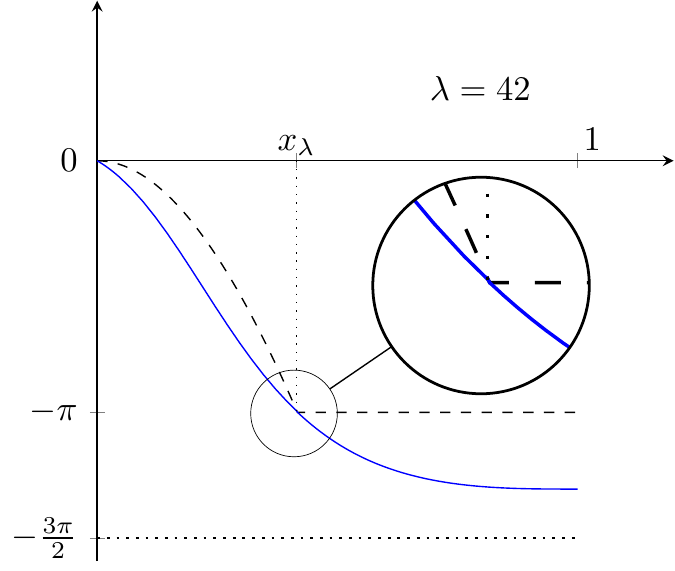}
	\end{subfigure}
	\hfill
	\begin{subfigure}{0.48\textwidth}
		\includegraphics[width = 0.9\textwidth]{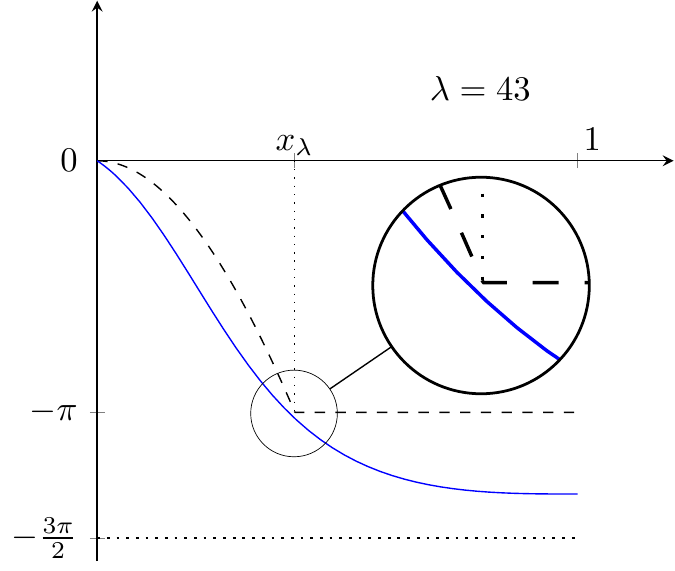}
	\end{subfigure}
	\caption{
		Two minimizers of $F_\lambda$ in $\mathfrak{C}^*_\lambda$:
		if $\lambda = 42$, then the minimizer passes through $(x_\lambda,-\pi)$
		and it is not a local minimizer of $F_\lambda$ in $H^1_*([0,1])$;
		if $\lambda = 43$, then the minimizer does not meet $\phi^*_\lambda$
		and it is indeed a local minimizer of $F_\lambda$ in $H^1_*([0,1])$.
	}
	\label{fig:42notouch}
\end{figure}

\noindent It is natural to try to generalize
the results developed herein in various directions.

\noindent Firstly, from the point of view of calculus of variations,
it would be interesting to investigate what happens to local minimizers when
the potential $V$ has a more general form,
rather than the form $b(1-x)\sin\theta$;
this could be interesting for the application to one-dimensional continua with more complicated microstructures than the one considered in the Timoshenko beam,
as for instance the ones investigated in \cite{Barch,Berez,Placidi,Turco}.

\noindent Secondly,
from the point of view of elasticity theory,
the generalization of the inextensibility constraint \eqref{inextensible} leads,
in its simplest form, to a further additive term in the integrand of type
$C(\|\boldsymbol{\chi}'\|-1)^2/2$ and to a potential of the form
\[
	V(x,\theta)=b\|\boldsymbol{\chi}'\|(1-x)\sin\theta,
\]
and thus it also introduces new problems.

\noindent Finally,
it is also natural to try to generalize the existence and regularity results
concerning the global minimizer (developed in Section \ref{sec:global-min})
to problems living
in $W^{m,p}\times W^{n,p}$ of type: 
\[
	\inf_{u,v}\int_{\Omega}\bigg(
	f(\nabla^m u )+g(\nabla^n v)+h(u-v)-V(x,u,v)\bigg)\text{d}x,
\]
where $\Omega$ is a bounded domain of an Euclidean space
and $m$ is strictly larger than $n$. One may expect that,
assuming $f,g,h$ nice enough and suitable boundary conditions,
the term in $u-v$ should allow to gain $W^{m,p}$
regularity for both elements of the minimizing pair $(\bar{u},\bar{v})$.


\section{Acknowledgments}
	The authors thank Pierre Seppecher and Roberto Giambò for useful discussions on the subject.











\begin{thebibliography}{}
	\bibitem{ball} Ball, J. M. (2002). Some open problems in elasticity. In Geometry, mechanics, and dynamics (pp. 3-59). Springer, New York, NY.

	\bibitem{Barch}  Barchiesi, E., dell'Isola, F., Bersani, A. M., \& Turco, E. (2021). Equilibria determination of elastic articulated duoskelion beams in 2D via a Riks-type algorithm. International Journal of Non-Linear Mechanics, 128, 103628.

	\bibitem{Battista} Battista, A., Della Corte, A., dell’Isola, F., \& Seppecher, P. (2018). Large deformations of 1D microstructured systems modeled as generalized Timoshenko beams. Zeitschrift für angewandte Mathematik und Physik, 69(3), 1-22.

	\bibitem{Berez} Berezovski, A., Engelbrecht, J., \& Maugin, G. A. (2009). One-dimensional microstructure dynamics. In Mechanics of Microstructured Solids (pp. 21-28). Springer, Berlin, Heidelberg.

	\bibitem{braides} Braides, A. (2002). Gamma-convergence for Beginners (Vol. 22). Clarendon Press.

	\bibitem{Brezis} Brezis, H.
	\newblock Functional Analysis, Sobolev Spaces and Partial Differential Equations
	\newblock {\em Universitext}
	\newblock Springer
	\textbf{2011} 

	\bibitem{buttazzo} Buttazzo, G., Giaquinta, M., \& Hildebrandt, S. (1998). One-dimensional variational problems: an introduction (No. 15). Oxford University Press.

	\bibitem{Corona2020JFPT}
	Corona, D.
	\newblock A multiplicity result for {E}uler–{L}agrange orbits satisfying the
	conormal boundary conditions.
	\newblock {\em J.~Fixed Point Theory Appl.} \textbf{2020}, \emph{22}, 60.

	\bibitem{cupini2007} Cupini, G., Guidorzi, M., \& Marcelli, C. (2007).
	Necessary conditions and non-existence results for autonomous nonconvex variational problems.
	J.~Differential Equations, 243, 329--348.

	\bibitem{Euler__} Della Corte, A., Battista, A., dell'Isola, F., \& Seppecher, P. (2019). Large deformations of Timoshenko and Euler beams under distributed load. Zeitschrift fur angewandte Mathematik und Physik, 70(2), 1-19.

	\bibitem{Euler_} Della Corte, A., dell'Isola, F., Esposito, R., \& Pulvirenti, M. (2017). Equilibria of a clamped Euler beam (Elastica) with distributed load: Large deformations. Mathematical Models and Methods in Applied Sciences, 27(08), 1391-1421.

	\bibitem{Dacorogna} Dacorogna, B. (2007). Direct methods in the calculus of variations (Vol. 78). Springer Science \& Business Media.

	\bibitem{Dellisola} dell'Isola, F., Della Corte, A., Battista, A., \& Barchiesi, E. (2019). Extensible beam models in large deformation under distributed loading: A numerical study on multiplicity of solutions. In Higher Gradient Materials and Related Generalized Continua (pp. 19-41). Springer, Cham.


	\bibitem{ggp} Giamb\`{o} R., Giannoni F., Piccione P., 
	Multiple orthogonal geodesic chords in nonconvex Riemannian disks using obstacles, Calc.Var., (2018) 57:117.

	\bibitem{GT} Gilbarg D., Trudinger N.S., Elliptic Partial Differential Equations of second order, Springer,\textbf{1977}  

	\bibitem{analyticreal} S. Krantz, H. R. Parks, ``A Primer of Real Analytic Functions", Birkhäuser (2002).

	\bibitem{MarinoScolozzi83}
	Marino, A.; Scolozzi, D.
	\newblock Geodetiche con ostacolo.
	\newblock {\em Boll. UMI B (6)} \textbf{1983}, \emph{2}, 1--31.


	\bibitem{Placidi} Placidi, L. (2016). A variational approach for a nonlinear one-dimensional damage-elasto-plastic second-gradient continuum model. Continuum Mechanics and Thermodynamics, 28(1), 119-137.

	\bibitem{teschl} Teschl, G. (2012). Ordinary differential equations and dynamical systems (Vol. 140). American Mathematical Soc.

	\bibitem{_Timo} Timoshenko, S.P.: On the correction for shear of the differential equation for transverse vibrations of prismatic
	453 bars. Lond. Edinb. Dublin Philos. Mag. J. Sci. 41(245), 744-746 (1921)

	\bibitem{Timo} Timoshenko, S.P. (1922). On the transverse vibrations of bars of uniform cross-section.  Lond. Edinb. 454 Dublin Philos. Mag. 455 J. Sci. 43(253), 125-131. 

	\bibitem{Turco} Turco, E., Barchiesi, E., Giorgio, I., \& dell'Isola, F. (2020). A Lagrangian Hencky-type non-linear model suitable for metamaterials design of shearable and extensible slender deformable bodies alternative to Timoshenko theory. International Journal of Non-Linear Mechanics, 123, 103481.

	\bibitem{Young1937} Young, L. C. (1937). Generalized curves and the existence of an attained absolute minimum in the calculus of variations. Comptes Rendus de la Societe des Sci. et des Lettres de Varsovie, 30, 212-234.

	\bibitem{Young1938} Young, L. C. (1938). Necessary conditions in the calculus of variations. Acta Mathematica, 69, 229-258.

	\bibitem{Kurdila} Kurdila, A. J., \& Zabarankin, M. (2006). Convex functional analysis. Springer Science \& Business Media.


\end{thebibliography}
\end{document}